\numberwithin{equation}{section}
\newtheorem{prop}{Proposition}[section]
\newtheorem{lemma}[prop]{Lemma}
\newtheorem{thm}[prop]{Theorem}
\newtheorem{cor}[prop]{Corollary}
\theoremstyle{definition}
\newtheorem{defn}[prop]{Definition}
\newtheorem{const}[prop]{Construction}
\newtheorem{rmk}[prop]{Remark}
\newtheorem{ex}[prop]{Example}
\newtheorem{ass}[prop]{Assumption}
\DeclareMathOperator{\rep}{Rep}
\DeclareMathOperator{\tr}{Tr}    
\DeclareMathOperator{\spec}{Spec} 
\DeclareMathOperator{\sym}{Sym}
\DeclareMathOperator{\im}{Im}
\DeclareMathOperator{\Gal}{Gal}
\DeclareMathOperator{\proj}{Proj}
\DeclareMathOperator{\Stab}{Stab} 
\DeclareMathOperator{\diag}{diag}
\DeclareMathOperator{\Aut}{Aut}
\DeclareMathOperator{\crep}{\cR ep}
\DeclareMathOperator{\Hilb}{Hilb}
\DeclareMathOperator{\Map}{Map}
\DeclareMathOperator{\Ext}{Ext}
\newcommand{\Gbar}{\overline{\mathbf{G}}_{Q,d}}
\newcommand{\ModSirs}{\mathcal{M}_{Q,d}^{(\Si,\theta)-rs}}
\newcommand{\Mods}{\mathcal{M}_{Q,d}^{\theta-s}}
\newcommand{\ModSiurs}{\mathcal{M}_{Q,d}^{(\Si,u,\theta)-rs}}
\newcommand{\RepSirs}{(\Rep^{\Si})^{\chi_\theta-rs}}
\newcommand{\RepSiurs}{({_u}\Rep^{\Si})^{\chi_\theta-rs}}
\newcommand{\ra}{\rightarrow}
\def\cB{\mathcal B}
\def\cH{\mathcal H}
\def\cL{\mathcal L}
\def\cM{\mathcal M}\def\cN{\mathcal N}
\def\cR{\mathcal R}\def\cT{\mathcal T}
\def\AA{\mathbb A}\def\CC{\mathbb C}
\def\EE{\mathbb E}\def\GG{\mathbb G}\def\HH{\mathbb H}
\def\NN{\mathbb N}\def\PP{\mathbb P}
\def\QQ{\mathbb Q}\def\RR{\mathbb R}
\def\ZZ{\mathbb Z}
\def\fg{\mathfrak g}
\def\fk{\mathfrak u}\def\fl{\mathfrak l}
\def\fu{\mathfrak u}
\def\SL{\mathrm{SL}}
\newcommand{\Id}{\mathrm{Id}}
\newcommand{\Ad}{\mathrm{Ad}}
\newcommand{\Mat}{\mathrm{Mat}}
\newcommand{\Hom}{\mathrm{Hom}}
\def\GL{\mathbf{GL}} 
\newcommand{\Q}{\mathbb{Q}}
\newcommand{\N}{\mathbb{N}}
\newcommand{\A}{\mathbb{A}}
\newcommand{\Z}{\mathbb{Z}}
\newcommand{\lra}{\longrightarrow}
\newcommand{\lmt}{\longmapsto}
\newcommand{\Mod}{\mathcal{M}_{Q,d}^{\theta-ss}}
\newcommand{\Modgs}{\mathcal{M}_{Q,d}^{\theta-gs}}
\newcommand{\Rep}{\rep_{Q,d}}
\newcommand{\G}{\mathbf{G}_{Q,d}}
\newcommand{\pphi}{\varphi}
\newcommand{\Gm}{\mathbb{G}_{m}}
\newcommand{\Si}{\Sigma}
\newcommand{\si}{\sigma}
\newcommand{\kb}{\overline{k}}
\newcommand{\RepQbar}{\rep_{\overline{Q},d}}
\newcommand{\GQbar}{\mathbf{G}_{\overline{Q},d}}
\newcommand{\LieG}{\fg_{Q,d}}
\newcommand{\Reps}{\rep_{Q,d}^{\theta-s}}
\newcommand{\ga}{\gamma}
\newcommand{\ov}[1]{\overline{#1}}
\title[Group actions on quiver varieties]{Group actions on quiver varieties and applications}
\author{Victoria Hoskins}
\address{Freie Universit\"at Berlin, Arnimallee 3, Raum 011, 14195 Berlin, Germany.}
\email{hoskins@math.fu-berlin.de}
\author{Florent Schaffhauser}
\address{Departamento de Matem\'aticas, Universidad de Los Andes, Carrera 1 \#18A-12, 111 711 Bogot\'a, Colombia \& IRMA UMR 7501, Universit\'e de Strasbourg, 7 rue Ren\'e Descartes, 67084 Strasbourg, France.}
\email{schaffhauser@math.unistra.fr}
\keywords{Algebraic moduli problems (14D20), Geometric Invariant Theory (14L24)}
\begin{document}

\begin{abstract}
We study algebraic actions of finite groups of quiver automorphisms on moduli spaces of quiver representations. We decompose the fixed loci using group cohomology and we give a modular interpretation of each component. As an application, we construct branes in hyperk\"{a}hler quiver varieties, as fixed loci of such actions.
\end{abstract}

\maketitle

\tableofcontents

\section{Introduction}

For a quiver $Q$ and an algebraically closed field $k$, King \cite{king} constructs moduli spaces of semistable $k$-representations of $Q$ of fixed dimension vector $d \in \NN^V$ as a geometric invariant theory (GIT) quotient of a reductive group $\G$ acting on an affine space $\Rep$ with respect to a character $\chi_\theta$ determined by a stability parameter $\theta \in \ZZ^V$. The stability parameter also determines a slope-type notion of $\theta$-(semi)stability for $k$-representations of $Q$, which involves testing an inequality for all proper non-zero subrepresentations. This GIT construction gives moduli spaces  $\cM^{\theta-(s)s}_{Q,d}$ of $\theta$-(semi)stable $k$-representations of $Q$ of dimension $d$, which are both coarse moduli spaces (if we consider semistable representations up to S-equivalence). 

The study of quiver representations is extremely fruitful in numerous areas of mathematics, from representation theory and algebraic geometry to mathematical physics, and many interesting varieties and moduli spaces arise as quiver varieties: for example, minimal resolutions of Kleinian singularities, such as the Hilbert scheme of points in the plane, are Najakima quiver varieties via the McKay correspondence, instanton moduli spaces admit quiver descriptions via ADHM data, and certain parabolic (Higgs) bundle moduli spaces arise as quiver varieties via (hyper)polygon spaces (for example, see \cite{ginzburg} and the references therein).

In this paper, we define a finite group $\Aut(Q)$ of covariant and contravariant automorphisms of $Q$ in $\S$\ref{quiver_aut_section} and consider certain, naturally induced, algebraic actions of subgroups $\Sigma \subset \Aut(Q)$ on $\Mod$, when the stability parameter $\theta$ and dimension vector $d$ are compatible with the $\Si$-action in the sense of $\S$\ref{quiver_aut_section}. 
We restrict the $\Si$-action to $\Mods \subset \Mod$, in order to use the fact that the stabiliser of every GIT stable point in $\Rep$ is a diagonal copy of $\GG_m$, denoted $\Delta$, in $\G$ (\textit{cf.}\ Remark \ref{stabiliser_of_stable}) to decompose the $\Si$- fixed locus of $\Mods$ in terms of the group cohomology of $\Si$ with values in $\Delta$ or the (non-Abelian) group $\G$. In \cite{HS_galois}, we use similar methods to decompose the rational points of moduli spaces of quiver representations over a perfect field by using actions of the absolute Galois group and many arguments simplify due to Hilbert's 90th Theorem, thus the decomposition in the Galois case is much cleaner. 

As an application, we construct submanifolds of hyperk\"{a}hler quiver varieties with rich holomorphic and symplectic geometry (known as branes \textit{cf.}\ \cite{Kapustin_Witten}) as fixed loci of finite groups of quiver automorphisms. Furthermore, the geometry of these fixed loci can be studied using our decomposition result (Theorem \ref{decomp_thm_qaut_intro} below) and given a modular interpretation by Corollary \ref{modular_interp_of_Sigma_fixed_pts_as_Sigma_equiv_rep}. 

\subsection{The decomposition of the fixed locus}

Let us outline the key steps appearing in the decomposition of the $\Si$-fixed locus in $\Mods$. First, as the $\Si$-action on $\Mod$ can be induced by compatible $\Si$-actions on $\Rep$ and $\G$, we construct a morphism 
\[ f_\Sigma : \Rep^{\Si}/\!/_{\chi_\theta} \G^{\Si} \lra (\Mod)^{\Si} \]
 of $k$-varieties in Propositions \ref{prop constr f cov} and \ref{prop constr of f general auto gp}, where for a $k$-variety $X$ with a $\Si$-action, $X^{\Si}$ denotes the fixed locus. We define the $(\Si,\theta)$-regularly stable locus by $\Rep^{(\Si,\theta)-rs}:=\Rep^\Si \times_{\Rep} \Rep^{\theta-s}$ and let $f^{rs}_\Si : \ModSirs \lra \Mods$ denote the restriction of $f_\Si$ to the moduli space of regularly stable representations. In general, the morphism $f^{rs}_\Si$ is neither surjective nor injective (\textit{cf.}\ Examples \ref{ex f not surj cov} and \ref{ex f not inj cov}, respectively). However, its failure to be bijective can be described in terms of group cohomology: the non-empty closed fibres of $f^{rs}_\Si$ are in bijection with the kernel of the pointed map $H^1(\Si, \Delta(k)) \lra H^1(\Si, \G(k))$, by Proposition \ref{non_empty_fibres}. This pointed map appears in an exact sequence
 \begin{equation}\label{les}
\ldots \lra H^1(\Si, \Delta(k)) \stackrel{\alpha}{\lra} H^1(\Si, \G(k)) \lra  H^1(\Si, \Gbar(k)) \stackrel{\delta}{\lra} H^2(\Si, \Delta(k))
\end{equation} 
associated to the short exact sequence of groups $1 \lra \Delta \lra \G \lra \Gbar \lra 1$ and other terms in this sequence will appear in our decomposition.
 
The second step is to construct a so-called type map from the closed points of the $\Si$-fixed locus to the second group cohomology of $\Si$ with values in $\Delta(k)$
\[ \cT : \Mods(k)^{\Si} \lra H^2(\Si,\Delta(k))\]
(\textit{cf}.\ Construction \ref{constr_type_map_def} and also $\S$\ref{sec cov and contr case}) and show that the image of $f^{rs}_{\Sigma}(k)$ is contained in the preimage of the trivial element under the type map; however, this containment can be strict (\textit{cf.}\ Example \ref{ex f not surj cov}). 

The third step is to introduce the notion of a modifying family $u$ (\textit{cf.}\ Definition \ref{modifying_family_def}) in order to modify the action of $\Si$ on $\Rep$ and $\G$ such that the induced $\Si$-action on $\Mod$ coincides with the original action. We let ${_u}\Rep^{\Si}$ and $ {_u}\G^{\Si}$ denote the fixed loci of these modified $\Si$-actions determined by $u$. Then we construct a morphism
${_u}f_{\Si} : {_u}\Rep^{\Si}/\!/_{\chi_\theta}\ {_u}\G^{\Si} \lra (\Mod)^{\Si}$
of $k$-varieties (\textit{cf.}\ Theorem \ref{new_fSigma} for the case of a covariant group of quiver automorphisms) and show that the image of ${_u}f^{rs}_{\Sigma}$ (that is, the restriction of ${_u}f_{\Si}$ to the preimage of $\Mods$) is contained in the preimage under the type map of the cohomology class of a $\Delta(k)$-valued 2-cocycle $c_u$ naturally defined by $u$ (\textit{cf.}\ Theorem \ref{new_fSigma}).

Finally, we describe the modifying families $u$ using group cohomology to obtain the following decomposition of the $\Si$-fixed locus in $\Mods$.

\begin{thm}\label{decomp_thm_qaut_intro}
Let $\Si \subset  \Aut^+(Q)$ be a group of covariant automorphisms of $Q$ for which $\theta$ and $d$ are compatible. Then we have the following set-theoretic decomposition of the set of closed points of the $\Si$-fixed locus of $\Mods$:
\[ (\Mods)^{\Si}(k) \qquad = \bigsqcup_{\begin{smallmatrix}[c_u] \in \im \cT \\ \ov{[b]} \in H^1_{u}(\Si, \G(k)) / H^1(\Si,\Delta(k)) \end{smallmatrix}} \im {_{u^b}}f^{rs}_{\Si}(k)\]
where $u^b$ is the modifying family determined by $[b]$ and $u$ via Lemma \ref{lemma mod family 1cocycle}, and ${_{u^b}}f^{rs}_{\Si}$ is the morphism defined as in Theorem \ref{new_fSigma}. 

Moreover, the non-empty fibres of ${_{u^b}}f_{\Si}^{rs}(k)$ are in bijection with the pointed set $$\ker\big(H^1(\Si,\Delta(k)) \lra H^1_{u^b}(\Si,\G(k))\big).$$

Finally, if the index set of this decomposition on closed points is finite, then this decomposition is induced by a decomposition of $(\Mods)^{\Si}$ as a coproduct in the category of varieties. In particular, this is the case in the following two situations:
\begin{enumerate}
\renewcommand{\labelenumi}{(\roman{enumi})}
\item $k=\CC$ and $\Sigma \subset  \Aut^+(Q)$ is arbitrary,
\item $k$ is algebraically closed and $\Sigma \subset  \Aut^+(Q)$ is cyclic.
\end{enumerate}
\end{thm}

The proof will be given in Section \ref{proof_main_result}, after Theorem \ref{components_of_fibres_of_the_type_map}. This decomposition for fixed loci of quiver automorphisms groups is more complicated than the decomposition in \cite{HS_galois} of rational points via Galois group actions (as several Galois cohomology groups in \cite{HS_galois} are trivial by Hilbert's 90th Theorem). In the quiver automorphism setting, the images of the morphisms ${_u}f_{\Si}$ may be strictly contained in $\cT^{-1}([c_u])$ and these morphisms may not be injective (\textit{cf.}\ Examples \ref{ex f not surj cov} and \ref{ex f not inj cov} respectively), but these issues do not arise in \cite{HS_galois}.

If $k$ is not algebraically closed, then one has a decomposition of the geometric points of the $\Sigma$-fixed locus in the moduli space $\Modgs$ of $\theta$-geometrically stable quiver representations (\textit{cf.}\ Remark \ref{rmks on decomp thm}). With some work, it may be possible to combine \cite{HS_galois} with the results in this paper, to obtain a decomposition of the rational points $(\Modgs)^{\Sigma}(k)$ for a perfect field $k$.

We provide a modular interpretation of this decomposition by observing that the domains of the morphisms ${_{u^b}}f^{rs}_{\Si}$ can also be described as moduli spaces of so-called $(\Si,u^b)$-equivariant representations of $Q$ in Corollary \ref{modular_interp_of_Sigma_fixed_pts_as_Sigma_equiv_rep}. For the trivial modifying family $u = 1$, the domain of $f_{\Si}$ can also be described as a moduli space of representations of a quotient quiver $Q/\Si$ (\textit{cf.}\ Definition \ref{defn quotient quiver} and Corollary \ref{modular_interp_of_Si_fixed_reps}).

\subsection{Constructing branes in hyperk\"{a}hler quiver varieties }
One can construct an algebraic symplectic analogue of $\Mod$ as a moduli space $\cN_{\overline{Q}}$ of representations of a doubled quiver $\overline{Q}$, modulo relations defined by a moment map. Over the complex numbers, if $\cN_{\overline{Q}}$ is smooth, it is hyperk\"{a}hler; for example, Nakajima quiver varieties can be described in this way (see \cite[p. 261]{CBmoment}). 

In $\S$\ref{branes_section}, we construct submanifolds of algebraic symplectic (and hyperk\"{a}hler, when $k = \CC$) quiver varieties as fixed loci for actions of subgroups of $\Aut(\overline{Q})$ (and for complex conjugation, when $k = \CC$) with rich symplectic (and holomorphic) geometry. Such submanifolds can be described in the language of branes \cite{Kapustin_Witten}, which will be recalled in Section \ref{branes_section}. 
The study of branes in Nakajima quiver varieties was initiated in \cite{fjm}, where involutions such as complex conjugation, multiplication by $-1$ and transposition are used to construct branes in Nakajima quiver varieties. We add to this picture branes arising from actions of quiver automorphisms that are $\ov{Q}$-(anti)-symplectic (this is a combinatorial notion, \textit{cf.}\ Definition \ref{def anti sympl auto}).

\begin{thm}
Let $k = \CC$ and assume that $\cN_{\overline{Q}}$ is smooth, and thus hyperk\"{a}hler. Let $\sigma$ be an involution of $\overline{Q}$ and $\tau : \CC \lra \CC$ denote complex conjugation; then
\begin{enumerate}
\item $(\cN_{\overline{Q}})^\sigma$ is a $BBB$-brane (resp. $BAA$-brane) if $\sigma$ is $\ov{Q}$-symplectic (resp. $\ov{Q}$-anti-symplectic);
\item $(\cN_{\overline{Q}})^{\tau \circ \sigma}$ is an $ABA$-brane (resp. $AAB$-brane), if $\sigma$ is $\ov{Q}$-symplectic (resp. $\ov{Q}$-anti-symplectic).
\end{enumerate} 
\end{thm}

Interestingly, we can construct hyperholomorphic branes ($BBB$-branes) from groups of $\ov{Q}$-symplectic automorphisms, that need not be of order 2 (\textit{cf.}\ Theorem \ref{thm BBB brane}).

\subsection{Connections with previous work}
Contravariant involutions of a quiver were studied by Derksen and Weyman \cite{Derksen_Weyman}, Zubkov \cite{Zubkov_I,Zubkov_II}, Shmelkin \cite{Shmelkin}, Bocklandt \cite{Bocklandt} and later by Young \cite{young}, where Young's motivation comes from physics and as an application he constructs orientifold Donaldson-Thomas invariants. In \cite{young}, the action of a contravariant involution is also modified using what is called a \lq duality structure', which corresponds to our notion of modifying families. 
Motivated by questions in representation theory, Henderson and Licata study  actions of so-called \lq admissible' covariant automorphisms on Nakajima quiver varieties of type A and prove a decomposition of the fixed locus \cite{Henderson_Licata}; however, they do not use group cohomology type techniques or see phenomena such as the morphisms ${_u}f_{\Si}$ failing to be injective in their setting (for example, compare \cite[Lemma 3.17]{Henderson_Licata} with Proposition \ref{non_empty_fibres}).

For moduli spaces of principal $G$-Higgs bundles ($G$ being a complex reductive group) over a smooth complex projective curve $X$, actions induced by involutions (or automorphisms) of $X$ and involutions of $G$ have been studied in \cite{BS2,BS1,BGP,GPR,HS,GPW,Sch_JSG,Sch_JDG,Sch_actions}; the approach in these papers is usually based on the gauge-theoretic, rather than algebraic, constructions of those moduli spaces. In this paper, we utilise the algebraic construction of quiver moduli spaces; however, the results we obtain are of a similar flavour.

\subsection{Generalisations to other moduli problems and GIT quotients}
Many of our techniques can be applied to study fixed loci of group actions on more general GIT quotients. However, for the results involving group cohomology, one would need to assume that the stabiliser group of all GIT stable points is a fixed subgroup (analogous to the fixed subgroup $\Delta \subset \G$), and one would need this fixed subgroup to be Abelian, in order to define the type map, as the second cohomology is only defined for an Abelian coefficient group. Over the complex numbers, by the Kempf-Ness Theorem, one can relate GIT quotients with symplectic reductions. In the symplectic setting, by using moment maps, one could also perform an analogous study of such fixed loci.

\subsection{The structure of the paper} 
In $\S$\ref{quiver_moduli_sect}, we recall King's construction of moduli spaces of quiver representations. In  $\S$\ref{quiver_aut_section}, we study actions of quiver automorphism groups and prove our decomposition for the fixed loci. In $\S$\ref{branes_section}, we apply our results to construct branes in hyperk\"{a}hler quiver varieties, and in $\S$\ref{sec apps and exs}, we study some examples.

\subsection*{Notation} A quiver $Q=(V,A,h,t)$ is an oriented graph, consisting of a finite vertex set $V$, a finite arrow set $A$, and head and tail maps $h,t:A\lra V$. 

The main results are for quiver representations over an algebraically closed field; however, we discuss the case of non-algebraically closed fields in Remark \ref{rmks on decomp thm}.\eqref{non-closed field}. Therefore, unless otherwise specified, $k$ is an algebraically closed field, all schemes are of finite type over $k$ and by a point of such a scheme, we mean a closed point.

\subsection*{Acknowledgements.} The authors thank the Institute of Mathematical Sciences of the National University of Singapore, where part of this work was carried out, for their hospitality in 2016, and acknowledge the support from U.S. National Science Foundation grants DMS 1107452, 1107263, 1107367 "RNMS: Geometric structures And Representation varieties" (the GEAR Network). 

V. Hoskins is supported by the \textit{Excellence Initiative of the DFG at the Freie Universit\"{a}t Berlin}. 

F. Schaffhauser is supported by \textit{Convocatoria 2018-2019 de la Facultad de Ciencias (Uniandes), Programa de investigaci\'on “Geometr\'ia y Topolog\'ia de los Espacios de M\'odulos”}, the \textit{European Union’s Horizon 2020 research and innovation programme under grant agreement No 795222} and the \textit{University of Strasbourg Institute of Advanced Study (USIAS)}.

We thank the referee for their detailed reading and many valuable suggestions, as well as for pointing out additional references to previous work on the subject.

\section{Moduli spaces of quiver representations}\label{quiver_moduli_sect}

Let us recall King's construction \cite{king} of moduli spaces of representations of a quiver $Q=(V,A,h,t)$ over an algebraically closed field $k$.

\begin{defn}\label{k_rep_def}
A $k$-representation of $Q$ is a tuple $W:=((W_v)_{v\in V}, (\pphi_a)_{a\in A})$ where:
\begin{itemize}
\item $W_v$ is a finite-dimensional $k$-vector space for all $v\in V$;
\item  $\pphi_a: W_{t(a)}\lra W_{h(a)}$ is a $k$-linear map for all $a\in A$.
\end{itemize} 
\end{defn}

\noindent The dimension vector of $W$ is the tuple $d=(\dim_k W_v)_{v\in V}$; we then say $W$ is $d$-dimensional. There are also natural notions of morphisms of quiver representations and subrepresentations.

\subsection{The GIT construction}
Every $k$-representation of $Q$ of fixed dimension vector $d=(d_v)_{v\in V}\in \N^V$ is isomorphic to a point of the following affine $k$-space
\[ \Rep := \prod_{a\in A} \Mat_{d_{h(a)}\times d_{t(a)}}.\]
The reductive $k$-group $\G:=\prod_{v\in V} \GL_{d_v}$ acts algebraically on $\Rep$ by conjugation: for $g = (g_v)_{v \in V} \in \G$ and $M = (M_a)_{a \in A} \in \Rep$, we have
 \begin{equation}\label{action_of_G_on_Rep}
g\cdot M := (g_{h(a)} M_a g_{t(a)}^{-1})_{a\in A},
\end{equation}
and the orbits for this action are in bijection with the set of isomorphism classes of $d$-dimensional $k$-representations of $Q$. 
One would like to construct a moduli space for quiver representations as a quotient of this action; however, by a results of Le Bruyn and Procesi, $k[\Rep]^{\G}$ is generated by traces of oriented cycles in $Q$ and so the affine GIT quotient is a point if $Q$ has no oriented cycles \cite{lBP}.

Instead King constructs a GIT quotient of the $\G$-action on an open subset of $\Rep$ by linearising the action using a stability parameter $\theta=(\theta_v)_{v\in V}\in \Z^V$. Let  $\theta':=(\theta'_v)_{v\in V}$ where $\theta'_v := \theta_v \sum_{\alpha\in V} d_\alpha - \sum_{\alpha\in V} \theta_\alpha d_\alpha$ for all $v\in V$; then $\sum_{v\in V} \theta'_v d_v =0$ and $\theta$ determines a character $\chi_\theta: \G \lra \GG_m$
\begin{equation}\label{the_character}
\chi_{\theta}( (g_v)_{v \in V} ):=\prod_{v\in V} (\det g_v)^{-\theta'_v}.
\end{equation} 
We let $\cL_\theta$ denote the $\G$-linearisation on the trivial line bundle $\Rep\times\A^1$, where $\G$ acts on $\AA^1$ via multiplication by $\chi_\theta$. As explained in \cite{king}, the invariant sections of positive powers $\cL_\theta^n$ of this linearisation are functions $f:\Rep\lra k$ satisfying  $f(g\cdot M) = \chi_\theta(g)^n\, f(M)$, for all $g\in\G$ and all $M\in\Rep$. We note that, as the subgroup of $\G$
 \begin{equation}\label{Delta_subgp}
\Delta:= \{ (t I_{d_v})_{v\in V} : t\in \GG_m\} \cong \GG_m,
\end{equation} 
acts trivially on $\Rep$, invariant sections of $\cL_\theta^n$ only exist if $\chi_\theta(\Delta) = 1$; by construction of $\theta'$, this holds, as $\sum_{v\in V} \theta'_v d_v =0$. 

The $\G$-invariant sections of positive powers of $\cL_\theta$ are used to determine GIT notions of (semi)stability with respect to $\chi_\theta$ (see, \cite[Definition 2.1]{king}, where we note that the notion of stability is modified to account for the presence of the global stabiliser $\Delta$). This determines open subsets $\Rep^{\chi_\theta-(s)s}$ of $\chi_\theta$-(semi)stable points and there is a GIT quotient
\[ \pi : \Rep^{\chi_\theta-ss} \lra \Rep/\!/_{\chi_\theta} \G:= \proj \bigoplus_{n \geq 0} H^0(\Rep, \cL_\theta^{n})^{\G}, \]
which is a good quotient of the $\G$-action on $\Rep^{\chi_\theta-ss}$ and restricts to a geometric quotient $\pi|_{\Rep^{\chi_\theta-s}} : \Rep^{\chi_\theta-s} \lra \Rep^{\chi_\theta-s}/\G$ of the GIT stable set.

\subsection{Slope semistability and moduli spaces}

For a stability parameter $\theta$, King proves that GIT $\chi_\theta$-(semi)stability in $\Rep$ can be reinterpreted as a slope type notion of $\theta$-(semi)stability for quiver representations in order to construct moduli spaces of $\theta$-semistable representations of $Q$. We note that King's original formulation of $\theta$-(semi)stablility involved checking the positivity of a certain integer, but we use the equivalent reformulation as a slope-type condition.

\begin{defn}[(Semi)stability] The $\theta$-slope of a $k$-representation $W$ of $Q$ is $$\mu_\theta(W) := \frac{\sum_{v\in V} \theta_v\dim_k W_v}{\sum_{v\in V} \dim_k W_v} \in \Q.$$
We say $W$ is:
\begin{enumerate}
\item $\theta$-semistable if $\mu_\theta(W') \leq \mu_\theta(W)$ for all $k$-subrepresentation $0 \neq W'\subset W$.
\item $\theta$-stable if $\mu_\theta(W') < \mu_\theta(W)$ for all $k$-subrepresentation $0 \neq W'\subsetneq W$.
\end{enumerate}
There are natural notions of Jordan-H\"{o}lder and Harder--Narasimhan filtrations, and we say two $\theta$-semistable $k$-representations of $Q$ are $S$-equivalent if their associated graded objects for their Jordan--H\"{o}lder filtrations are isomorphic.
\end{defn}

Using the Hilbert--Mumford criterion, King shows the GIT and slope notions of semistability on $\Rep$ coincide, and they can both be described using 1-parameter subgroups (1-PS) of $\G$. Consequently, he constructs moduli spaces of $\theta$-(semi)stable quiver representations using the GIT quotient described above.

\begin{thm}\cite{king}\label{GIT_const_of_Mod}
A point of $\Rep$ is GIT $\chi_\theta$-(semi)stable if and only if it is $\theta$-(semi)stable as a $k$-representation of $Q$. Moreover,
\[ \Mod:= \Rep /\!/_{\chi_\theta} \G \quad   (\text{resp. } \Mods:= \Rep^{\chi_\theta-s}/\G) \]
is a coarse moduli space for $S$-equivalence (resp. isomorphism) classes of $\theta$-semi\-stable (resp. $\theta$-stable) $d$-dimensional $k$-representations of $Q$.
\end{thm}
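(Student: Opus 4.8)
The plan is to prove the statement in two stages: first, establish the numerical equivalence between GIT $\chith$-(semi)stability and slope $\theta$-(semi)stability via the Hilbert--Mumford criterion; second, identify the GIT quotients as coarse moduli spaces, using the good/geometric quotient properties of $\pi$ recalled above together with the fact that $\Rep$ carries a local universal family.

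For the first stage, I would begin by recalling the Hilbert--Mumford criterion in this affine-with-character setting: a closed point $M\in\Rep$ is GIT $\chith$-semistable for $\cL_\theta$ if and only if $\mu^{\cL_\theta}(M,\lambda)\geq 0$ for every one-parameter subgroup $\lambda:\Gm\lra\G$ for which the limit $\lim_{t\to 0}\lambda(t)\cdot M$ exists, and $\chith$-stable if moreover the inequality is strict for every such $\lambda$ whose image is not contained in $\Delta$. The key dictionary is that a one-parameter subgroup of $\G=\prod_{v}\GL_{d_v}$ is the same datum as a $\ZZ$-grading $W_v=\bigoplus_{n}W_v^{(n)}$ of each vector space, and the limit exists precisely when the descending filtration $W_v^{\geq p}:=\bigoplus_{n\geq p}W_v^{(n)}$ consists of subrepresentations, i.e.\ each $\pphi_a$ does not lower weights. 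A direct computation of the Mumford weight from \eqref{the_character}, using $\det\lambda_v(t)=t^{\sum_n n\dim_k W_v^{(n)}}$, gives
\[ \mu^{\cL_\theta}(M,\lambda) \;=\; -\sum_{v\in V}\theta'_v\sum_{n}n\dim_k W_v^{(n)}, \]
with the sign convention under which semistability reads $\mu^{\cL_\theta}(M,\lambda)\geq 0$. Summation by parts over the filtration rewrites the right-hand side as a nonnegative combination of the slope differences $\mu_\theta(W)-\mu_\theta(W^{\geq p})$. Applying this to the two-step filtration $0\subset W'\subset W$ attached to a subrepresentation $W'$ recovers exactly the slope inequality $\mu_\theta(W')\leq\mu_\theta(W)$, while decomposing an arbitrary $\lambda$ into its filtration steps shows that the slope condition on all subrepresentations forces $\mu^{\cL_\theta}(M,\lambda)\geq 0$ for all admissible $\lambda$; the same argument with strict inequalities handles stability, after discarding the one-parameter subgroups valued in $\Delta$, which act trivially.

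For the second stage, I would use that $\pi:\Rep^{\chith-ss}\lra\Mod$ is a good quotient, so its closed points biject with closed $\G$-orbits in $\Rep^{\chith-ss}$, while $\Rep^{\chith-s}/\G$ is a geometric quotient whose points are the orbits in the stable locus. By the first stage these orbits parametrize $\theta$-(semi)stable representations, so it remains to match orbit structure with representation-theoretic equivalence. I would show that the closed orbits in the semistable locus are precisely those of $\theta$-polystable representations: the associated graded of a Jordan--H\"older filtration degenerates any semistable $M$ to a polystable representation lying in the closure of its orbit, and two semistable representations lie in the same closed orbit if and only if they are $S$-equivalent; in the stable locus each orbit is already closed with stabiliser exactly $\Delta$, so geometric points correspond to isomorphism classes. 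To upgrade this bijection on points to the coarse moduli property, I would invoke that $\Rep$, with its tautological family, is a local universal family: every family of $\theta$-semistable representations over a base $S$ is, locally on $S$, pulled back from the tautological family along a morphism to $\Rep^{\chith-ss}$, unique up to the $\G$-action. Composing with the $\G$-invariant map $\pi$, these local morphisms glue to a well-defined classifying map $S\to\Mod$, giving the natural transformation from the moduli functor; combined with the bijection on $k$-points and the universal property of the categorical quotient, this shows $\Mod$ (resp.\ $\Mods$) corepresents the functor.

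The main obstacle I expect lies in the first stage: correctly matching $\mu^{\cL_\theta}(M,\lambda)$ to the slope inequality requires careful bookkeeping of the normalisation $\theta'$ versus $\theta$ and verification that the reduction from arbitrary gradings to two-step filtrations loses no information, so that testing semistability on subrepresentations is genuinely equivalent to the Hilbert--Mumford inequality for all one-parameter subgroups. On the moduli side, the subtler point is the identification of orbit closures with $S$-equivalence classes, which rests on the existence and functoriality of Jordan--H\"older filtrations and on $\Delta$ being the generic stabiliser; these are precisely the ingredients that make the good quotient a coarse, rather than merely set-theoretic, moduli space.
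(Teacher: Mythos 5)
Your proposal is correct and takes essentially the same route as the paper's source: the paper does not reprove this theorem but cites \cite{king}, and King's argument is precisely your two stages --- the Hilbert--Mumford criterion with one-parameter subgroups read as weight filtrations whose limits exist exactly when the filtration steps are subrepresentations (with the summation-by-parts reduction to two-step filtrations, and the normalisation $\theta'$ making the boundary terms vanish), followed by the identification of closed orbits with polystable representations and the local universal family argument giving the coarse moduli property. The bookkeeping points you flag (the $\theta'$ versus $\theta$ normalisation, the exclusion of $\Delta$-valued one-parameter subgroups in the stability criterion, and orbit closures versus $S$-equivalence) are exactly the ones King handles, so there is no gap.
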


Let us end with an important observation that will be repeatedly used in $\S$\ref{quiver_aut_section}. 

\begin{rmk}\label{stabiliser_of_stable}
For a point $M \in \Rep^{\theta-s}$, we have 
\[\Stab_{\G}(M) = \Delta(k) \subset \G(k),\]
as the automorphism group of any stable representation is isomorphic to $\GG_m$.
\end{rmk}

\section{Automorphisms of quivers}\label{quiver_aut_section}

\begin{defn}
A covariant (resp.\ contravariant) automorphism of $Q=(V,A,h,t)$ is a pair of bijections $(\si_V:V\lra V,\si_A:A\lra A)$ such that
\begin{equation}\label{compatibility of h and t for cov autos}
\begin{array}{cccl}
t \circ \sigma_A = \sigma_V \circ t & \text{and} & h \circ \sigma_A = \sigma_V \circ h & \quad \text{if } \sigma \text{ is covariant,}\\
t \circ \sigma_A = \sigma_V \circ h & \text{and} & h \circ \sigma_A = \sigma_V \circ t & \quad \text{if } \sigma \text{ is contravariant.} 
\end{array}
\end{equation}
Henceforth, to simplify notation, we denote $\sigma_V$ and $\sigma_A$ both by $\sigma$. We let $\Aut^+(Q)$ (resp.\ $\Aut^-(Q)$) denote the group (resp.\ the set) of covariant (resp.\ contravariant) automorphisms of $Q$ and write $\Aut(Q) := \Aut^+(Q) \sqcup \Aut^-(Q)$.
\end{defn}

Note that, for any field $k$, a covariant (resp.\ contravariant) automorphism $\sigma$ of $Q$ determines a graded algebra automorphism (resp.\ anti-automorphism) of the path algebra $kQ$. We refer to the covariant automorphism of $Q$ given by $\si_V=\Id_V$ and $ \si_A=\Id_A$ as the trivial automorphism of $Q$. As the composition of covariant automorphisms is covariant, $\Aut^+(Q)$ is a subgroup of $\Aut(Q)$.
There is a group homomorphism $\text{sign} : \Aut(Q) \lra \{ \pm 1 \}$ sending $\si$ to $-1$ if and only if $\si$ is contravariant. Evidently, $\ker(\text{sign})=\Aut^+(Q)$.

For a subgroup $\Sigma \subset  \Aut(Q)$ of quiver automorphisms, we want to study induced actions of $\Sigma$ on the moduli space $\Mod$ of $\theta$-semistable $d$-dimensional $k$-representations of $Q$ (provided $d$ and $\theta$ are $\Sigma$-compatible in the sense of Definitions \ref{Si_comp_dim_vector_and_stab_param} and \ref{Si_comp_dim_vector_and_stab_param_general}). For a subgroup $\Sigma \subset  \Aut(Q)$, either $\Sigma \subset  \Aut^+(Q)$ or $\Sigma$ is an extension
\[ 1 \lra \Sigma^+ \lra \Sigma \lra \{ \pm 1 \} \lra 1, \]
where $\Sigma^+ \subset \Aut^+(Q)$ is a subgroup of covariant automorphisms. Therefore, one can start by studying the actions by subgroups of covariant automorphisms and actions by contravariant involutions. Since contravariant automorphisms of order two are studied in \cite{Derksen_Weyman,Shmelkin,Zubkov_I,Zubkov_II,Bocklandt,young}, we restrict our attention to subgroups $\Sigma \subset  \Aut^+(Q)$ of covariant automorphisms until $\S$\ref{sec cov and contr case}

\subsection{The induced action on the moduli space}\label{sec action quiver autos}

Let $\Sigma \subset  \Aut^+(Q)$ be a subgroup of covariant automorphisms and let $k$ be a field. In this section, we will construct algebraic $\Sigma$-actions on moduli spaces $\Mod$ of $\theta$-semistable $d$-dimensional $k$-representations of $Q$, when $d$ and $\theta$ are $\Sigma$-compatible in the following sense.

\begin{defn}\label{Si_comp_dim_vector_and_stab_param}
Let us denote $\si(d):=(d_{\si(v)})_{v\in V}$ and $\si(\theta):=(\theta_{\si(v)})_{v\in V}$.
\begin{enumerate}
\item A dimension vector $d=(d_v)_{v\in V}$ is $\Sigma$-compatible if $\sigma(d) = d$ for all $\sigma \in \Sigma$.
\item A stability parameter $\theta=(\theta_v)_{v\in V}$ is $\Sigma$-compatible if $\sigma(\theta) = \theta$ for all $\sigma \in \Sigma$.
\end{enumerate}
\end{defn}

\noindent Throughout the rest of Section \ref{quiver_aut_section}, we assume that $d$ and $\theta$ are $\Sigma$-compatible. 
To construct the $\Sigma$-action on $\Mod$, we use the GIT construction of this moduli space (Theorem \ref{GIT_const_of_Mod}). As $d$ is $\Si$-compatible, we have algebraic actions 
\begin{equation}\label{cov action}
\Phi: \Sigma   \times \Rep \lra \Rep \quad \text{and} \quad \Psi: \Sigma \times \G \lra \G 
\end{equation}
given by, for $\sigma \in \Sigma$ and for $M:=(M_a)_{a\in A} \in \Rep$ and $g:=(g_v)_{v \in V} \in \G$, 
\[ \Phi_\sigma(M):= \sigma(M):=(M_{\sigma(a)})_{a \in A} \quad \text{and} \quad 
 \Psi_\sigma(g):=\sigma(g):=(g_{\sigma(v)})_{v \in V}.\]
These actions are compatible with each other in the sense that
\begin{equation}\label{cov action comp}
\Phi_\sigma(g \cdot M) = \Psi_\sigma(g) \cdot \Phi_\sigma(M).
\end{equation}

\begin{prop}\label{prop induced action on quotient in covariant setting}
For $\Sigma$-compatible $d$ and $\theta$, the following statements hold.
\begin{enumerate}
\item The $\Sigma$-action on $\Rep$ preserves the GIT (semi)stable sets $\Rep^{\chi_\theta-(s)s}$;
\item There is an induced algebraic $\Sigma$-action on the moduli spaces $\mathcal{M}^{\theta-(s)s}_{Q,d}$.
\end{enumerate}
\end{prop}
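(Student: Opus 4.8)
The plan is to prove both statements by leveraging the compatibility identity \eqref{cov action comp}, which says that $\Phi_\sigma(g \cdot M) = \Psi_\sigma(g) \cdot \Phi_\sigma(M)$, so that each $\Phi_\sigma$ intertwines the $\G$-action on $\Rep$ with itself via the group automorphism $\Psi_\sigma$. First I would establish statement (1): I want to show that $\Phi_\sigma$ preserves $\Rep^{\chi_\theta-(s)s}$. The cleanest route is to use King's slope criterion (Theorem \ref{GIT_const_of_Mod}), which characterises GIT $(s)s$ points as the $\theta$-(semi)stable representations. Given $M \in \Rep^{\chi_\theta-(s)s}$, I must check that $\sigma(M)$ is again $\theta$-(semi)stable. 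Since $\sigma$ is a quiver automorphism, it induces an equivalence on the category of $k$-representations of $Q$ sending a representation $W$ to a representation $\sigma(W)$, and this equivalence sends subrepresentations to subrepresentations bijectively. Because $d$ and $\theta$ are $\Sigma$-compatible, the $\theta$-slope is preserved: $\mu_\theta(\sigma(W')) = \mu_\theta(W')$ for every subrepresentation $W'$, using $\sigma(\theta)=\theta$ and $\sigma(d)=d$. Hence the defining slope inequalities for $\theta$-(semi)stability are preserved under $\sigma$, proving (1).

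For statement (2), the strategy is to descend the $\Sigma$-action through the GIT quotient. By (1), $\Phi$ restricts to an action on $\Rep^{\chi_\theta-ss}$, and by \eqref{cov action comp} this action is compatible with the $\G$-action up to the automorphism $\Psi_\sigma$ of $\G$. The key observation is that the character $\chi_\theta$ is $\Sigma$-invariant, i.e.\ $\chi_\theta \circ \Psi_\sigma = \chi_\theta$: this follows from the $\Sigma$-compatibility of $\theta$ and $d$, since $\theta'_v$ depends only on $\theta$ and $d$ through symmetric expressions, giving $\theta'_{\sigma(v)} = \theta'_v$, and so permuting the factors of $\chi_\theta$ by $\sigma$ leaves the product unchanged. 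Consequently $\Phi_\sigma$ lifts to a $\Sigma$-equivariant automorphism of the linearisation $\cL_\theta$, acting on the trivial bundle $\Rep \times \AA^1$ compatibly. This means $\Phi_\sigma$ acts on the graded ring of invariant sections $\bigoplus_{n \geq 0} H^0(\Rep, \cL_\theta^n)^{\G}$: indeed, if $f$ is a $\G$-invariant section of $\cL_\theta^n$, then $f \circ \Phi_\sigma^{-1}$ is again such a section because the $\Psi_\sigma$-twisted invariance translates back to genuine $\G$-invariance using $\chi_\theta \circ \Psi_\sigma = \chi_\theta$.

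From this graded-ring action I would then conclude by taking $\proj$. The induced $\Sigma$-action on $\bigoplus_{n \geq 0} H^0(\Rep, \cL_\theta^n)^{\G}$ is by graded $k$-algebra automorphisms, hence yields an algebraic $\Sigma$-action on $\Mod = \proj \bigoplus_{n\geq 0} H^0(\Rep, \cL_\theta^n)^{\G}$. The compatibility with the quotient map $\pi$ shows that this action coincides with the set-theoretic action sending the $S$-equivalence class of $M$ to that of $\sigma(M)$. For the stable locus, I use that $\Phi_\sigma$ preserves $\Rep^{\chi_\theta-s}$ by part (1) and that the geometric quotient $\pi|_{\Rep^{\chi_\theta-s}}$ is $\Sigma$-equivariant, so the $\Sigma$-action restricts to an algebraic action on $\Mods$; the same argument applies to the geometrically stable locus over a general field. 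The main obstacle I anticipate is verifying carefully that the $\Sigma$-action on the graded ring is well-defined, i.e.\ that the twisting by $\Psi_\sigma$ interacts correctly with $\G$-invariance, and in particular checking the identity $\chi_\theta \circ \Psi_\sigma = \chi_\theta$ from the explicit formula \eqref{the_character} for $\chi_\theta$; this is the linchpin that makes the action descend, and everything else is a formal consequence of functoriality of $\proj$.
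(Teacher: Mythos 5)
Your proposal is correct, and part (1) coincides with the paper's own argument: the authors also reduce to showing that $\theta$-(semi)stability of representations is preserved, via the slope computation $\mu_\theta(\sigma(W)) = \mu_\theta(W)$ and the bijection between subrepresentations of $W$ and of $\sigma(W)$ (this is their Lemma \ref{theta_semistability_preserved}, invoked through the Hilbert--Mumford criterion; they also sketch an alternative argument via invariant sections). Where you diverge is part (2). The paper descends the action in one line using the universal property of the categorical quotient: by \eqref{cov action comp}, the composite $\pi \circ \Phi_\sigma$ is $\G$-invariant, hence factors uniquely through $\pi$, giving $\Phi'_\sigma$ and the commutative square \eqref{induced_action_on_quotient}. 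You instead work at the level of the graded ring $\bigoplus_{n \geq 0} H^0(\Rep, \cL_\theta^n)^{\G}$: you verify $\chi_\theta \circ \Psi_\sigma = \chi_\theta$ from $\Si$-compatibility of $\theta$ and $d$ (this computation is correct, since $\theta'_{\sigma(v)} = \theta'_v$), deduce that $f \mapsto f \circ \Phi_\sigma^{-1}$ preserves the semi-invariants of each weight $\chi_\theta^n$, and apply $\proj$. Both mechanisms are standard and give the same action. The paper's route is shorter and hands you for free the uniqueness of $\Phi'_\sigma$ and the commuting diagram, which are reused later (e.g.\ in Lemma \ref{new_Sigma_action} to compare the actions induced by modified and unmodified $\Si$-actions upstairs); your route makes explicit exactly where $\Si$-compatibility enters (the identity $\chi_\theta \circ \Psi_\sigma = \chi_\theta$, which in the paper is the engine behind its first argument for (1)) and shows directly that the resulting action is by algebraic automorphisms, at the cost of some bookkeeping about graded-ring actions versus the contravariance of $\proj$. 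One small remark: having built $\Phi'_\sigma$ via $\proj$, your check that it agrees with the set-theoretic action on $S$-equivalence classes is exactly the statement that the square \eqref{induced_action_on_quotient} commutes, so in the end the two proofs meet at the same diagram.
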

\begin{proof}
For (1), it suffices to check that, for all $\sigma \in \Sigma$, the image of each closed point in $\Rep^{\chi_\theta-(s)s}$ under $\Phi_\sigma$ lies in $\Rep^{\chi_\theta-(s)s}$. The $\Si$-compatibility of $\theta$ implies that $\chi_\theta$ is $\Sigma$-invariant and so the $\Si$-action preserves the invariant sections of powers of  $\cL_\theta$ as in \cite[Proposition 3.1]{HS_galois}, which proves (1). Equivalently, by the Hilbert--Mumford criterion, it suffices to check that for every $\theta$-(semi)stable $k$-representation $W = ( (W_v)_{v \in V}, (\varphi_a)_{a \in A})$, the $k$-representation $\sigma(W):=( ( W_{\sigma(v)})_{v \in V}, (\varphi_{\sigma(a)})_{a \in A})$ is also $\theta$-(semi)stable, which we show in Lemma \ref{theta_semistability_preserved}. 

For (2), recall that $\pi:\Rep^{\chi_\theta-(s)s} \lra \mathcal{M}^{\theta-(s)s}_{Q,d}$ is a categorical quotient of the $\G$-action. To define the induced action $\Phi' : \Sigma \times \mathcal{M}^{\theta-(s)s}_{Q,d} \lra \mathcal{M}^{\theta-(s)s}_{Q,d}$, we observe that, for each $\sigma \in \Sigma$, the morphism $\pi\circ\Phi_\si$ is $\G$-invariant by \eqref{cov action comp}, and so there is a unique morphism $\Phi_\sigma'$ making the following diagram commute
\begin{equation}\label{induced_action_on_quotient}
\xymatrix@1{\Rep^{\chi_\theta-(s)s} \ar[r]^{\Phi_\sigma} \ar[d]^{\pi} &\Rep^{\chi_\theta-(s)s}
\ar[d]^{\pi} \\
\mathcal{M}^{\theta-(s)s}_{Q,d} \ar[r]^{\Phi'_\sigma} & \mathcal{M}^{\theta-(s)s}_{Q,d},}
\end{equation} given by the universal property of the categorical quotient $\pi$.
\end{proof}

\subsection{Morphisms to the fixed-point set of the action}

For $\Si \subset \Aut^+(Q)$, we let  $(\Mod)^{\Sigma}$ denote the fixed locus of the above action, which is a closed $k$-subvariety of $\Mod$. In order to describe this fixed locus, we construct morphisms from related moduli spaces to this fixed locus in this section.

\begin{defn}[Quotient quiver]\label{defn quotient quiver} Given $\Sigma \subset \Aut^+(Q)$, we define the quotient quiver $$Q/\Sigma:=(V/\Sigma,A/\Sigma, \tilde{h},\tilde{t}),$$ where $V/\Sigma$ and $A/\Sigma$ denote the set of $\Sigma$-orbits in $V$ and $A$ respectively, and the head and tail maps $\tilde{h},\tilde{t}: A/\Sigma \lra V/\Sigma$ are given by
$\tilde{h}( \Sigma \cdot a) = \Sigma \cdot h(a)$ and $\tilde{t}( \Sigma \cdot a) = \Sigma \cdot t(a)$,
which are well-defined by (\ref{compatibility of h and t for cov autos}). 
\end{defn}

\noindent A $\Sigma$-compatible dimension vector $d$ and stability parameter $\theta$ for $Q$ determine a dimension vector $\tilde{d}$ and stability parameter $\tilde{\theta}$ for $Q/\Sigma$, where $\tilde{d}_{\Sigma \cdot v} := d_v$ and $\tilde{\theta}_{\Sigma \cdot v} := |\Sigma \cdot v|\theta_v$. Let $\G^{\Si}$ and $\Rep^{\Si}$ denote the fixed loci for the 
actions of $\Si$ on $\G$ and $\Rep$ given by $\Psi$ and $\Phi$ respectively; 
then the action of $\G^{\Si}$ on $\Rep$ preserves $\Rep^{\Si}$ by \eqref{cov action comp}. By the following lemma, $\G^{\Si}$ is reductive and so we can consider GIT quotients of the $\G^{\Si}$-action on $\Rep^{\Si}$.

\begin{prop}\label{lemma iso GQ with G for Q mod Sigma}
There are isomorphisms
\[ \alpha: \mathbf{G}_{Q/\Si, \tilde{d}} \overset{\simeq}{\lra} \G^{\Si} \quad \text{and} \quad \beta : \rep_{Q/\Si,\tilde{d}} \overset{\simeq}{\lra} \Rep^{\Si} \]
such that, if the two groups are identified through $\alpha$, then $\beta$ is equivariant with respect to the $\mathbf{G}_{Q/\Si, \tilde{d}}$-action on $ \rep_{Q/\Si,\tilde{d}}$ and the $\G^{\Si}$-action on $\Rep^{\Si}$.
\end{prop}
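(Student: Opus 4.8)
The plan is to construct the isomorphisms $\alpha$ and $\beta$ explicitly from the combinatorial data of the quotient quiver $Q/\Si$, and then check equivariance by a direct comparison of the two group actions. First I would unwind the definitions: a vertex of $Q/\Si$ is a $\Si$-orbit $\Si \cdot v \subset V$, and the dimension vector $\tilde{d}$ assigns to it the common value $d_w$ for $w \in \Si \cdot v$ (well-defined since $d$ is $\Si$-compatible). Thus $\mathbf{G}_{Q/\Si,\tilde d} = \prod_{\Si \cdot v \in V/\Si} \GL_{\tilde d_{\Si\cdot v}}$. On the other side, an element $g = (g_v)_{v \in V} \in \G^{\Si}$ is by definition fixed by $\Psi$, i.e. $g_{\si(v)} = g_v$ for all $\si \in \Si$ and all $v$, so $g$ is constant along each $\Si$-orbit. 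The map $\alpha$ should send a tuple $(h_{\Si\cdot v})_{\Si\cdot v}$ to the tuple $(g_v)$ with $g_v := h_{\Si \cdot v}$; its inverse records the common value of $g$ on each orbit. Since both sides are products of the same general linear groups indexed by $V/\Si$, this is manifestly an isomorphism of algebraic groups.

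Next I would construct $\beta$. Here the subtlety is that an arrow-orbit $\Si \cdot a$ may have nontrivial stabiliser, so $|\Si \cdot a|$ can be smaller than $|\Si|$, but this does not obstruct the construction: $\rep_{Q/\Si,\tilde d} = \prod_{\Si\cdot a \in A/\Si} \Mat_{\tilde d_{\tilde h(\Si\cdot a)} \times \tilde d_{\tilde t(\Si\cdot a)}}$, and by the well-definedness of $\tilde h, \tilde t$ from \eqref{compatibility of h and t for cov autos} together with $\Si$-compatibility of $d$, the matrix sizes match $\Mat_{d_{h(a)}\times d_{t(a)}}$ for any representative $a$. A point $M = (M_a)_{a\in A} \in \Rep^{\Si}$ satisfies $M_{\si(a)} = M_a$ by the fixed-point condition for $\Phi$, so $M$ is constant along each arrow-orbit. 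I would define $\beta$ by sending $(N_{\Si\cdot a})$ to the tuple $M$ with $M_a := N_{\Si\cdot a}$ for each $a$ in the orbit, and the inverse reads off the common value on each orbit. To see $\beta$ is well-defined into $\Rep^{\Si}$, note the resulting $M$ is constant on orbits and hence $\Phi$-fixed; bijectivity and biregularity follow because it is just a relabelling of coordinates by orbit representatives.

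The core content is the equivariance claim, which I would verify by a direct computation comparing the two actions of \eqref{action_of_G_on_Rep}. Take $(h_{\Si\cdot v}) \in \mathbf{G}_{Q/\Si,\tilde d}$ and $(N_{\Si\cdot a}) \in \rep_{Q/\Si,\tilde d}$. The $\mathbf{G}_{Q/\Si,\tilde d}$-action gives, in coordinate $\Si\cdot a$, the matrix $h_{\tilde h(\Si\cdot a)}\, N_{\Si\cdot a}\, h_{\tilde t(\Si\cdot a)}^{-1} = h_{\Si\cdot h(a)}\, N_{\Si\cdot a}\, h_{\Si\cdot t(a)}^{-1}$ for a representative $a$. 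Applying $\beta$ then $\alpha$, this becomes, in the $a$-coordinate of $\Rep$, the matrix $g_{h(a)}\, M_a\, g_{t(a)}^{-1}$ where $g = \alpha((h_{\Si\cdot v}))$ and $M = \beta((N_{\Si\cdot a}))$, which is exactly the $a$-component of $g \cdot M$ in $\Rep$. Thus $\beta(g\cdot N) = \alpha(g)\cdot \beta(N)$, as required. I expect no serious obstacle here — everything is a bookkeeping argument with orbit representatives — and the only point demanding care is to confirm that the factor $|\Si\cdot v|$ appearing in $\tilde\theta$ plays no role in this proposition (it concerns only the linearisation, not the groups or affine spaces), so that the stated isomorphisms and equivariance hold before any stability condition is imposed.
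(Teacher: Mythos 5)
Your proposal is correct and follows exactly the paper's approach: the paper defines $\alpha$ and $\beta$ by the same \lq\lq spread the orbit-indexed value along each orbit\rq\rq\ prescription and notes that equivariance holds by construction, which is precisely the computation you carry out explicitly. Your added observations (well-definedness of $\tilde d$ via $\Si$-compatibility, irrelevance of the factor $|\Si\cdot v|$ in $\tilde\theta$ at this stage) are accurate and only make the paper's terse argument more transparent.
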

\begin{proof}
The isomorphisms are defined by 
\[ \alpha( (g_{\Si \cdot v})_{{\Si \cdot v} \in V/\Si}) = (g_{\Si \cdot v})_{v \in V} \quad \text{and} \quad  \beta((M_{\Si \cdot a})_{{\Si \cdot a} \in A/\Si}) = (M_{\Si \cdot a})_{a \in A},\] 
which are compatible with the actions by construction.
\end{proof}

\begin{cor}\label{modular_interp_of_Si_fixed_reps}
There are isomorphisms
$$\Rep^{\Si}/\!/_{\chi_\theta}\G^{\Si} \cong \rep_{Q/\Si,\tilde{d}}/\!/_{\chi_{\tilde{\theta}}} \mathbf{G}_{Q/\Si, \tilde{d}}$$ and $(\Rep^{\Si})^{\chi_\theta-s} / \G^{\Si} \cong \rep_{Q/\Si,\tilde{d}}^{\chi_{\tilde{\theta}}-s} / \mathbf{G}_{Q/\Si, \tilde{d}}$
where, on the left hand side of these isomorphisms, $\chi_\theta$ denotes the restriction to $\G^{\Si}$ of the character $\chi_\theta$ of $\G$.\\ Consequently, the GIT quotient $\Rep^{\Si}/\!/_{\chi_\theta}\G^{\Si}$ (resp.\ $(\Rep^{\Si})^{\chi_\theta-s} /\G^{\Si}$) can be interpreted as the moduli space
$\mathcal{M}_{Q/\Si,\tilde{d}}^{\tilde{\theta}-(s)s}$ of $\tilde{\theta}$-semistable (resp.\ $\tilde{\theta}$-stable) $\tilde{d}$-dimen\-sional representations of $Q/\Si$.
\end{cor}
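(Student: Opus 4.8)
The plan is to deduce Corollary \ref{modular_interp_of_Si_fixed_reps} directly from Proposition \ref{lemma iso GQ with G for Q mod Sigma} by transporting the entire GIT setup along the isomorphisms $\alpha$ and $\beta$, and then invoking King's Theorem \ref{GIT_const_of_Mod} on the target quiver $Q/\Si$. The key point is that GIT quotients are functorial for equivariant isomorphisms of an affine variety compatible with an isomorphism of the acting reductive group, \emph{provided} the relevant linearisations are matched. So the first thing I would do is check that $\beta$ intertwines the characters: that is, that under the group isomorphism $\alpha : \mathbf{G}_{Q/\Si,\tilde{d}} \overset{\simeq}{\lra} \G^{\Si}$, the restriction of $\chi_\theta$ to $\G^{\Si}$ pulls back to $\chi_{\tilde{\theta}}$. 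This is the genuine content to verify, and it explains the curious-looking definition $\tilde{\theta}_{\Si \cdot v} := |\Si \cdot v|\theta_v$.

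Concretely, I would compute both characters on an element $(g_{\Si \cdot v})_{\Si \cdot v \in V/\Si}$. On the one hand $\chi_{\tilde{\theta}}$ is built from the datum $\tilde{\theta}'$ for the quiver $Q/\Si$ via \eqref{the_character}; on the other, $\chi_\theta \circ \alpha$ sends this element to $\prod_{v \in V}(\det g_{\Si \cdot v})^{-\theta'_v}$, and grouping the product over $\Si$-orbits gives $\prod_{\Si \cdot v}(\det g_{\Si \cdot v})^{-\sum_{v' \in \Si \cdot v}\theta'_{v'}}$. Since $\theta$ and $d$ are $\Si$-compatible, $\theta'_{v'}$ is constant on each orbit, so the exponent is $-|\Si \cdot v|\,\theta'_{\Si \cdot v}$. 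I would then match this against the primed parameter $\tilde{\theta}'$ coming from $\tilde{\theta}$ on $Q/\Si$, using that $\sum_{\Si \cdot \alpha \in V/\Si}\tilde{d}_{\Si \cdot \alpha} = \sum_{v \in V} d_v$ is the total dimension (which is $\Si$-invariant) divided appropriately; the normalisation $\tilde{\theta}_{\Si \cdot v} = |\Si \cdot v|\theta_v$ is precisely what makes $\tilde{\theta}'_{\Si \cdot v}$ agree with $|\Si \cdot v|\theta'_{v}$ up to the shared positive scalar, so that the two characters coincide after passing to a common positive power (recall $\chi_\theta$ is only determined up to positive powers for GIT purposes). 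I would carry this out carefully since it is the one step where the specific formulae matter.

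Granting the character match, the rest is formal. The equivariant isomorphism $\beta$ restricts to an isomorphism of the $\chi_\theta$-semistable (resp.\ stable) loci, because $\beta$ identifies the graded rings of $\G^{\Si}$-invariant sections of powers of $\cL_\theta$ on $\Rep^{\Si}$ with the graded ring of $\mathbf{G}_{Q/\Si,\tilde{d}}$-invariant sections of powers of $\cL_{\tilde\theta}$ on $\rep_{Q/\Si,\tilde{d}}$; taking $\proj$ of isomorphic graded rings yields the asserted isomorphism of GIT quotients, and the geometric quotient statement on the stable loci follows the same way. Here I must be slightly careful that GIT semistability on $\Rep^{\Si}$ is taken for the $\G^{\Si}$-action (the restricted linearisation), which is exactly what appears on the left-hand side, rather than the ambient $\chi_\theta$-semistability in $\Rep$ for the full group $\G$. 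Finally, combining these isomorphisms with Theorem \ref{GIT_const_of_Mod} applied to the quiver $Q/\Si$ with dimension vector $\tilde{d}$ and parameter $\tilde{\theta}$ gives the modular interpretation as $\mathcal{M}_{Q/\Si,\tilde{d}}^{\tilde{\theta}-(s)s}$.

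The main obstacle is precisely the character bookkeeping in the first step: getting the exponents and the orbit-size factors to line up so that $\chi_\theta|_{\G^{\Si}} \circ \alpha$ and $\chi_{\tilde{\theta}}$ define the same polarisation (equivalently, agree up to a common positive power, which suffices since $\proj$ is insensitive to passing to a Veronese subring). Everything else is an application of the general principle that $\proj$ of isomorphic graded section rings produces isomorphic GIT quotients, together with King's theorem on the target quiver. I expect the write-up to consist of one short computation verifying the character compatibility, followed by a one-line appeal to functoriality of $\proj$ and to Theorem \ref{GIT_const_of_Mod}.
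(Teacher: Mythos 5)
Your strategy is the same as the paper's (transport along the equivariant isomorphisms $(\alpha,\beta)$ of Proposition \ref{lemma iso GQ with G for Q mod Sigma}, match the characters, then apply Theorem \ref{GIT_const_of_Mod} to $Q/\Si$), but the character matching --- which you correctly isolate as the only substantive step --- does not hold for a general $\Si$-compatible $\theta$, so the step ``the two characters coincide up to a common positive power'' would fail if you carried it out. Set $D=\sum_{v\in V}d_v$, $T=\sum_{v\in V}\theta_v d_v$ and $\tilde D=\sum_{O\in V/\Si}\tilde d_O$; note that $\tilde D\neq D$ as soon as some orbit has size $>1$ (each orbit is counted once), so your intermediate identity $\sum_{\Si\cdot\alpha\in V/\Si}\tilde d_{\Si\cdot\alpha}=\sum_{v\in V}d_v$ is already false. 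At an orbit $O$ with representative $v$, the exponent of $\chi_\theta\circ\alpha$ is $-|O|\theta'_v=-\bigl(|O|\theta_v D-|O|T\bigr)$ (using that $\theta'$ is constant on orbits), whereas the exponent of $\chi_{\tilde\theta}$ is $-\tilde\theta'_O=-\bigl(|O|\theta_v\tilde D-T\bigr)$ (using $\sum_{O'}\tilde\theta_{O'}\tilde d_{O'}=T$). When $T=0$ these vectors are positive multiples of one another (ratio $D/\tilde D$), but when $T\neq0$ they need not be proportional at all, and can even be \emph{negative} multiples of each other, which inverts semistability. Concretely, take $Q$ with vertices $1,2,3$ and arrows $a\colon 1\to 3$, $b\colon 2\to 3$, with $\si$ the covariant involution exchanging $1\leftrightarrow 2$ and $a\leftrightarrow b$, and take $d=(1,1,1)$, $\theta=(2,2,3)$, so $T=7$. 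Then $\theta'=(-1,-1,2)$ and $(\chi_\theta\circ\alpha)(g,h)=g^{2}h^{-2}$, while $\tilde d=(1,1)$, $\tilde\theta=(4,3)$, $\tilde\theta'=(1,-1)$ and $\chi_{\tilde\theta}(g,h)=g^{-1}h$: the one character is the inverse of a positive power of the other. Correspondingly, the left-hand side of the Corollary is \emph{empty} (every representation of $Q$ of dimension $(1,1,1)$ is destabilised by the vertex-simple at the sink $3$, of $\theta$-slope $3>7/3$), while the right-hand side is a \emph{point} (a representation of $Q/\Si$ with nonzero arrow map is $\tilde\theta$-stable, since $3<7/2$). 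So the asserted isomorphism itself fails in this generality.

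You are in good company: the paper's own proof rests on the same unverified assertion $\chi_{\tilde\theta}=\chi_\theta\circ\alpha$, which is false for the same reason (it holds verbatim when $\Si$ fixes every vertex, and up to positive powers when $T=0$, but not in general). Everything is repaired by imposing King's normalisation $\sum_{v}\theta_v d_v=0$ --- which holds in all of the paper's examples, e.g.\ Example \ref{ex f not surj cov} and the polygon-space parameters --- or, equivalently, by defining the quotient parameter from the normalised vector, $\tilde\theta_{\Si\cdot v}:=|\Si\cdot v|\,\theta'_v$. Under that normalisation one has $|O|\theta'_v=D\,\tilde\theta_O$ and $\tilde\theta'_O=\tilde D\,\tilde\theta_O$, hence $(\chi_\theta\circ\alpha)^{\tilde D}=\chi_{\tilde\theta}^{\,D}$, and since GIT (semi)stable loci and the Proj of the ring of semi-invariants are unchanged when a character is replaced by a positive power, the rest of your argument --- $\beta$ identifies the semi-invariant rings, Proj gives the isomorphism of quotients and of the geometric quotients of the stable loci, and Theorem \ref{GIT_const_of_Mod} applied to $(Q/\Si,\tilde d,\tilde\theta)$ gives the modular interpretation --- goes through exactly as you wrote it.
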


\begin{proof}
The isomorphisms are a consequence of Proposition \ref{lemma iso GQ with G for Q mod Sigma}, the universal property of the categorical quotient and the observation that $\chi_{\tilde{\theta}} = \chi_\theta \circ \alpha$ (since, by definition, $\tilde{\theta}_{\Sigma \cdot v} = |\Sigma \cdot v|\theta_v$). The last assertion then follows from Theorem \ref{GIT_const_of_Mod}.
\end{proof}

\noindent In Corollary \ref{modular_interp_of_Sigma_fixed_pts_as_Sigma_equiv_rep}, we will provide another modular interpretation of the GIT quotient $\Rep^{\Si} /\!/_{\chi_\theta}\G^{\Si}$. 

For $\Si \subset \Aut^+(Q)$, we note that $\Si$-compatibility of $\theta$ is equivalent to $\chi_\theta$ being $\Si$-invariant.

\begin{lemma}\label{lemma compare ss}
Let $\Si  \subset \Aut^+(Q)$ be a finite group acting on $\Rep$ and $\G$ as defined at \eqref{cov action} and fix a stability parameter $\theta$ such that $\chi_\theta$ is $\Si$-invariant. Then, for a $\Si$-fixed point $M \in \Rep(k)$, the following statements are equivalent.
\begin{enumerate}
\item $M$ is GIT semistable for the $\G$-action on $\Rep$ with respect to the character $\chi_\theta : \G\lra \GG_m$.
\item $M$ is GIT semistable for the $\G^{\Si}$-action on $\Rep^{\Si}$ with respect to the restricted character $\chi_\theta : \G^{\Si} \lra \GG_m$.
\end{enumerate} 
\end{lemma}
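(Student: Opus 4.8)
The plan is to prove the equivalence via the Hilbert--Mumford criterion, which King establishes for the $\G$-action and which also applies to the $\G^{\Si}$-action because $\G^{\Si}$ is reductive by Proposition \ref{lemma iso GQ with G for Q mod Sigma}. For the $\Si$-fixed point $M$, any 1-PS $\lambda$ of $\G^{\Si}$ is in particular a 1-PS of $\G$, and since $\Rep^{\Si}$ is a closed $\G^{\Si}$-stable subvariety of $\Rep$ containing $M$, both the existence of the limit $\lim_{t\to 0}\lambda(t)\cdot M$ and the weight $\langle\chi_\theta,\lambda\rangle$ are computed identically whether $\lambda$ is viewed in $\G^{\Si}$ or in $\G$ (the character in (2) being the restriction to $\G^{\Si}$ of the one in (1)). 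The implication (1) $\Rightarrow$ (2) is then immediate: semistability for $\G$ demands the Hilbert--Mumford inequality for every 1-PS of $\G$, hence in particular for those lying in the subgroup $\G^{\Si}$.

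For the converse (2) $\Rightarrow$ (1) I would argue by contraposition, producing from a destabilising 1-PS of $\G$ one that already lies in $\G^{\Si}$. Suppose $M$ is not $\chi_\theta$-semistable for $\G$; by Theorem \ref{GIT_const_of_Mod} the representation $M$ is not $\theta$-semistable, so it admits a nonzero maximal destabilising subrepresentation $W_{\max}\subset M$, the first term of its Harder--Narasimhan filtration, which is the unique subrepresentation of maximal $\theta$-slope containing all others of that slope, and satisfies $\mu_\theta(W_{\max})>\mu_\theta(M)$. Because $\theta$ is $\Si$-compatible one has $\mu_\theta(\sigma(W'))=\mu_\theta(W')$ for every subrepresentation $W'$ and every $\sigma\in\Si$ (cf.\ Lemma \ref{theta_semistability_preserved}), so $\sigma(W_{\max})\subset\sigma(M)=M$ is again maximal destabilising; by uniqueness $\sigma(W_{\max})=W_{\max}$. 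Thus $W_{\max}$ is $\Si$-invariant and its dimension vector $e:=\dim W_{\max}$ is $\Si$-compatible.

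It then remains to convert the $\Si$-invariant subrepresentation $W_{\max}$ into an honest 1-PS of $\G^{\Si}$. Since $(W_{\max})_{\sigma(v)}=(W_{\max})_v$ as subspaces of the common space $k^{d_v}=k^{d_{\sigma(v)}}$, I would choose one linear complement $k^{d_v}=(W_{\max})_v\oplus C_v$ per $\Si$-orbit and propagate it to be constant along the orbit; this requires no averaging and works in any characteristic. The associated 1-PS $\lambda$, acting by $t$ on $(W_{\max})_v$ and trivially on $C_v$, then satisfies $\Psi_\sigma(\lambda)=\lambda$ and so lies in $\G^{\Si}$. As $W_{\max}$ is a subrepresentation, $\lim_{t\to 0}\lambda(t)\cdot M$ exists, and a direct computation gives $\langle\chi_\theta,\lambda\rangle=-\sum_{v}\theta'_v e_v$, which is negative precisely because $\mu_\theta(W_{\max})>\mu_\theta(M)$. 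This 1-PS of $\G^{\Si}$ violates the Hilbert--Mumford inequality, so $M$ is not $\chi_\theta$-semistable for $\G^{\Si}$, establishing the contrapositive.

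The main obstacle is this final conversion: a general destabilising subrepresentation of $M$ need not be $\Si$-invariant, and even an $\Si$-invariant flag would a priori yield a cocharacter that is only $\Si$-invariant up to conjugacy rather than genuinely fixed by $\Psi$. Both difficulties dissolve upon using the canonical maximal destabiliser, whose uniqueness forces honest $\Si$-invariance, together with the orbit-constant choice of complements, which makes the resulting cocharacter literally $\Psi$-fixed and hence a 1-PS of $\G^{\Si}$. If $k$ is not algebraically closed one checks semistability after base change to $\overline{k}$; the maximal destabiliser is canonical, hence stable under both $\Gal(\overline{k}/k)$ and $\Si$, so the argument is unaffected.
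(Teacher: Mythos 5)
Your proof is correct, but it takes a genuinely different route from the paper's. The paper proves the nontrivial implication (2) $\Rightarrow$ (1) by contrapositive using Kempf's instability theory: for an unstable $\Si$-fixed point $M$ it considers the canonical destabilising parabolic $P_M \subset \G$ attached to the optimal class of 1-PS for a $\Si$-invariant length function; $\Si$-invariance of the normalised Hilbert--Mumford functional and uniqueness of $P_M$ force $\sigma(P_M)=P_M$ for all $\sigma\in\Si$, so $P_M$ is constant along $\Si$-orbits of vertices and $P_M\cap\G^{\Si}$ is a proper parabolic of $\G^{\Si}$, from which a destabilising 1-PS of $\G^{\Si}$ is extracted. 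You instead pass through the modular side via Theorem \ref{GIT_const_of_Mod}: instability yields the maximal destabilising subrepresentation $W_{\max}$, whose uniqueness together with $\Si$-compatibility of $\theta$ forces $\sigma(W_{\max})=W_{\max}$, and you then build an explicit 1-PS of $\G^{\Si}$ from orbit-constant complements, with the standard weight computation $\langle\chi_\theta,\lambda\rangle=-\sum_v\theta'_v\dim(W_{\max})_v<0$. Both arguments rest on the same principle --- a canonical destabilising object is automatically $\Si$-fixed --- and your mechanism is in fact the one the paper itself uses later in Proposition \ref{sst_as_an_equiv_rep}. What your route buys: it is elementary and explicit, avoids Kempf's length functions and canonical parabolics, and (as you note) needs no averaging, hence no hypothesis on the characteristic. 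What the paper's route buys: it never invokes the representation-theoretic meaning of semistability, so it transfers to fixed-locus questions for finite group actions on more general affine GIT quotients, and it adapts with little change to the settings where the paper re-uses this lemma, namely the $u$-modified actions of Theorem \ref{new_fSigma} and Theorem \ref{comparison_with_GIT_stability} and the contravariant case of Proposition \ref{prop contr invol constr of f}. On that last point, be aware that your orbit-constant choice of complements exploits the untwisted permutation form of $\Psi$; for a modified action $\Psi^u$ one would need complements satisfying $u_{\sigma,v}(C_{\sigma(v)})=C_v$, which requires an extra argument (e.g.\ averaging a projection, reintroducing a characteristic assumption), so your proof, like the paper's written one, covers the standard action literally and the variants only after adjustment.
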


\begin{proof}
Evidently, (1) implies (2). For the converse, we proceed by contrapositive and argue along the lines of \cite[Theorem 4.2]{Kempf}, which is about Galois actions but carries over naturally to our setting. So let us assume that $M\in\Rep^\Si(k)$ is not $(\G,\chi_\theta)$-semistable. Let $\Lambda_M$ denote the set of 1-PSs $\lambda$ of $\G$ for which the morphism $\GG_m\lra \Rep$ given by the $\lambda$-action on $M$ extends to $\AA^1$. By \cite[Theorem 3.4]{Kempf}, there is a canonical parabolic subgroup $P_M\subset \G$ such that $P_M = P(\lambda)$ for any 1-PS $\lambda \in \Lambda_M$ which minimises a normalised Hilbert--Mumford functional $a_M : \Lambda_M \lra \RR$ given by $a_M(\lambda) = \langle \chi_\theta, \lambda \rangle \! / || \lambda ||$, where $||-||$ is a length function on the 1-PSs of $\G$ in the sense of \cite[p. 305]{Kempf}. Since $M$ is fixed by $\Si$, the set of 1-PSs $\Lambda_M$ is $\Si$-invariant by the compatibility of the actions of $\Si$ and $\G$. By summing over $\Si$ (or applying Kempf's construction of length functions to the reductive group $\G\rtimes\Si$), we can assume that $|| - ||$ is $\Si$-invariant. Then, as $\chi_\theta$ is $\Si$-invariant, it follows that $a_M$ is $\Si$-invariant analogously to Lemma 4.1 in \textit{loc.\ cit}. Therefore, $P_M = \sigma(P_M)$ for all $\si \in \Si$, by the uniqueness of $P_M$ analogously to the proof of Part (b) of Theorem 4.2 in  \textit{loc.\ cit}. Hence, $P_M$ is a $\Si$-invariant subset; that is, $\Si \cdot P_M = P_M$. 

We can write $P_M = \Pi_{v \in V} P_{M,v}$, where $P_{M,v} \subset \GL_{d_v}$ is a parabolic subgroup for all $v \in V$; then $\Si \cdot P_M = P_M$ implies that $P_{M,v} = P_{M,v'}$ for any two vertices $v,v'$ in the same $\Si$-orbit. Then 
\[ P_M \cap \G^{\Si} = \prod_{\Si \cdot v \in V/\Si} P_{M,v} \subset \G^\Si = \prod_{\Si \cdot v \in V/\Si} \GL_{d_v} \]
is a proper parabolic subgroup, as $P_M \subset \G$ is a proper parabolic subgroup by assumption. In particular, $M$ is not $(\G^{\Si}, \chi_\theta)$-semistable, as any 1-PS $\lambda \in P_M \cap G^{\Si}$ is in both $\G^{\Si}$ and $P_M$ and thus destabilises $M$.
\end{proof}

\begin{prop}\label{prop constr f cov}
In $\Rep^\Si$, there is an equality of $k$-subvarieties
\[ (\Rep^{\Si})^{(\G^\Si,\chi_\theta)-ss} = \Rep^{\Si} \times_{\Rep} \Rep^{(\G,\chi_\theta)-ss}.\]
The closed immersion 
$(\Rep^{\Si})^{(\G^\Si,\chi_\theta)-ss} \hookrightarrow \Rep^{(\G,\chi_\theta)-ss}$
induces a morphism
\[ f_{\Sigma}: \Rep^{\Si} /\!/_{\chi_\theta}(\G^\Si)  \lra \Rep /\!/_{\chi_\theta} \G\]
whose image is contained in the $\Si$-fixed locus.
\end{prop}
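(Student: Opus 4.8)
The plan is to deduce the equality of subvarieties straight from Lemma~\ref{lemma compare ss}, then to produce $f_\Si$ via the universal property of the GIT quotient, and finally to verify the fixed-locus claim on closed points. For the equality, I would first note that, since $\Rep^{(\G,\chi_\theta)-ss}$ is open in $\Rep$, the fibre product $\Rep^{\Si} \times_{\Rep} \Rep^{(\G,\chi_\theta)-ss}$ is the open subscheme of $\Rep^{\Si}$ obtained by intersecting with the semistable locus; its closed points are exactly the $\Si$-fixed points of $\Rep$ that are $(\G,\chi_\theta)$-semistable. On the other hand $(\Rep^{\Si})^{(\G^\Si,\chi_\theta)-ss}$ is the open $(\G^\Si,\chi_\theta)$-semistable locus inside $\Rep^{\Si}$. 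Lemma~\ref{lemma compare ss} says precisely that, for a $\Si$-fixed closed point, $(\G,\chi_\theta)$-semistability and $(\G^\Si,\chi_\theta)$-semistability coincide, so these two open subschemes of $\Rep^{\Si}$ share the same closed points. Because $\Rep^{\Si} \cong \rep_{Q/\Si,\tilde{d}}$ is a variety over the algebraically closed field $k$ (Proposition~\ref{lemma iso GQ with G for Q mod Sigma}), an open subscheme is determined by its closed-point set, so the two loci are equal.

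Next I would build the morphism. As $\Rep^{\Si}$ is closed in $\Rep$, the inclusion of the (now identified) common semistable locus into $\Rep^{(\G,\chi_\theta)-ss}$ is a $\G^\Si$-equivariant closed immersion $\iota$, the equivariance coming from the compatibility~\eqref{cov action comp} of the $\Si$-actions. Writing $\pi : \Rep^{(\G,\chi_\theta)-ss} \lra \Rep /\!/_{\chi_\theta} \G$ and $\pi_\Si : (\Rep^{\Si})^{(\G^\Si,\chi_\theta)-ss} \lra \Rep^{\Si} /\!/_{\chi_\theta} \G^\Si$ for the two GIT quotients, the composite $\pi \circ \iota$ is $\G^\Si$-invariant, since $\pi$ is $\G$-invariant and $\iota$ is equivariant. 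As $\pi_\Si$ is a good, hence categorical, quotient of the $\G^\Si$-action, its universal property yields a unique morphism $f_\Si$ with $f_\Si \circ \pi_\Si = \pi \circ \iota$; this is the desired map to $\Mod = \Rep /\!/_{\chi_\theta} \G$.

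Finally, for the fixed-locus claim it suffices to test closed points, $k$ being algebraically closed. A closed point of the source is $\pi_\Si(M)$ for some $\Si$-fixed semistable $M \in \Rep^{\Si}(k)$, and by construction $f_\Si(\pi_\Si(M)) = \pi(M)$. The induced action $\Phi'$ on $\Mod$ satisfies $\Phi'_\si \circ \pi = \pi \circ \Phi_\si$ by the defining diagram~\eqref{induced_action_on_quotient}, so $\Phi'_\si(\pi(M)) = \pi(\Phi_\si(M)) = \pi(\si(M)) = \pi(M)$, the last equality because $\si(M) = M$. Hence every closed point of the image is $\Si$-fixed, and since $(\Mod)^{\Si}$ is closed the image of $f_\Si$ lies in it. The only substantive ingredient is the semistability comparison isolated in Lemma~\ref{lemma compare ss} (which itself rests on Kempf's canonical destabilising parabolic); granting that, everything here is formal GIT bookkeeping, the one point needing care being the passage from equality of closed-point sets to equality of open subschemes, legitimate since $\Rep^{\Si}$ is a reduced variety over $k = \kb$.
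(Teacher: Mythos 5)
Your proof is correct and follows essentially the same route as the paper's: equality of the two open loci in $\Rep^{\Si}$ checked on closed points via Lemma~\ref{lemma compare ss}, construction of $f_\Si$ from the universal property of the categorical (good) quotient, and verification of the fixed-locus containment on closed points using diagram~\eqref{induced_action_on_quotient}. The only difference is that you spell out the steps the paper labels as ``straightforward,'' which is a welcome addition rather than a deviation.
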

\begin{proof}
The $k$-subvarieties $(\Rep^\Si)^{(\G^\Si,\chi_\theta)-ss}$ and $\Rep^{\Si} \times_{\Rep} \Rep^{(\G,\chi_\theta)-ss}$ are open in $\Rep^{\Si}$.  So, to show that they agree, it suffices to check the equality on closed points, for which we can use Lemma \ref{lemma compare ss}. Then the closed immersion 
\[(\Rep^{\Si})^{(\G^{\Si},\chi_\theta)-ss} \hookrightarrow \Rep^{(\G,\chi_\theta)-ss},\] 
induces the morphism $f_{\Si}$ via the universal property of the categorical quotient. It is straightforward to check that the image of $f_{\Sigma}$ is contained in the $\Si$-fixed locus.
\end{proof}

\noindent We can interpret $f_\Si$ as a morphism $\mathcal{M}_{Q/\Si,\tilde{d}}^{\tilde{\theta}-ss} \lra \Mod$ by Corollary \ref{modular_interp_of_Si_fixed_reps}. Let us now study properties of $f_{\Si}$ (or strictly speaking a restriction $f_{\Si}^{rs}$ of $f_\Si$ as introduced below) in terms of the group cohomology of $\Si$. We first introduce the notion of regularly stable for an arbitrary finite group $\Si$ acting algebraically on $\Rep$ and $\G$ in a compatible way (in the sense that \eqref{cov action comp} holds for these actions).

\begin{defn}[Regularly stable point]\label{GIT_rs}
Let $\Si$ be a finite group acting on $\Rep$ and $\G$ as defined at \eqref{cov action}. A point $M\in\Rep$ which is both $\Si$-fixed and $(\G,\chi_\theta)$-stable is called a $(\Si,\chi_\theta)$-regularly stable point of $\Rep$. Let $$\RepSirs:= \Rep^{\Si} \times_{\Rep} \Rep^{(\G,\chi_\theta)-s}$$ be the $\G^\Si$-invariant open subset of $\Rep^\Si$ whose points are both $\Si$-fixed and $(\G,\chi_\theta)$-stable. 
\end{defn}

As $(\G,\chi_\theta)$-stability implies $(\G^\Si,\chi_\theta)$-stability, we have 
\begin{equation}\label{rs_implies_stable}
\RepSirs \subset (\Rep^\Si)^{(\G^\Si,\chi_\theta)-s}.
\end{equation} However, the converse inclusion is not true in general (as Lemma \ref{lemma compare ss} does not hold for stable points). By \eqref{rs_implies_stable}, we have a geometric quotient
\[ \ModSirs:= \RepSirs/ \G^{\Si}=f^{-1}_\Si(\Mods), \]
which is open in $(\Rep^\Si)^{\chi_\theta-s}/ \G^\Si \simeq \mathcal{M}_{Q/\Si,\tilde{d}}^{\tilde{\theta}-s}$ (using Corollary \ref{modular_interp_of_Si_fixed_reps}) and that we call the moduli space of $(\Si,\theta)$-regularly stable representations of $Q$. Let
\[ f_{ \Si}^{rs}:=f_{\Si}|_{\ModSirs} : \ModSirs \lra (\Mods)^{\Si}\] 
denote the restriction of $f_{\Si}$ to the open subscheme $\ModSirs \subset \mathcal{M}_{Q/\Si,\tilde{d}}^{\tilde{\theta}-ss}$.

\begin{prop}\label{prop image frs closed}
The image of $f_{\Si}^{rs}$ is a closed subvariety of $(\Mods)^\Sigma$.
\end{prop}
\begin{proof}
Since the $\G$-action on $\Reps$ is closed, the image $\G \cdot \RepSirs$ of $\G \times \RepSirs$ under that action morphism is closed in $\Reps$. Consider the following commutative diagram
\[  \xymatrixcolsep{3pc} \xymatrix{ \RepSirs \ar[r] \ar@{->>}[d]_{\pi_\Sigma} \ar@{->>}[rd]_{\pi} & {\G \cdot \RepSirs}  \ar@{^{(}->}[r] \ar@{->>}[d]_{\pi} & \Reps  \ar@{->>}[d]_{\pi} \\ 
\ModSirs \ar@{->>}[r] 
\ar@/_2pc/[rr]^{f^{rs}_{\Si}}  & {\pi(\RepSirs)} \ar@{^{(}->}[r] & \Mods } \]
where $\pi$ (resp. $\pi_\Sigma$) denotes the GIT quotient for the $\G$-action on $\Rep$ (resp. $\G^{\Si}$-action on $\Rep^{\Si}$). Since the GIT quotient $\pi$ sends $\G$-invariant closed subvarieties of $\Reps$ to closed subvarieties of $\Mods$, we see that the lower right inclusion is a closed immersion. Clearly the lower left map is surjective, and so the image of $f^{rs}_{\Si}$ is the closed subvariety $\pi(\RepSirs) \subset \Mods$.
\end{proof}

\begin{prop}\label{non_empty_fibres}
Let $m=\G\cdot M \in \Mods(k)$. If non-empty, the fibre $ (f_{\Si}^{rs})^{-1}(m)$ is in bijection with the pointed set $$\ker\big(H^1(\Si,\Delta(k)) \lra H^1(\Si,\G(k))\big).$$
\end{prop}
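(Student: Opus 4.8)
The plan is to fix a representative $M \in \RepSirs(k)$ of a point of the fibre, which exists precisely because the fibre is assumed non-empty; thus $M$ is $\Si$-fixed and $(\G,\chi_\theta)$-stable with $\G\cdot M = m$. I would then attach to each point of the fibre a $1$-cocycle of $\Si$ valued in $\Delta(k)$. A $k$-point of $(f_{\Si}^{rs})^{-1}(m)$ is a $\G^\Si$-orbit $\G^\Si\cdot N$ with $N \in \RepSirs(k)$ and $\G\cdot N = \G\cdot M$; since $k$ is algebraically closed and both points lie in the single orbit $m$, we may write $N = g\cdot M$ for some $g \in \G(k)$. As $N$ and $M$ are $\Si$-fixed, the compatibility \eqref{cov action comp} gives $\sigma(N) = \sigma(g)\cdot M$, so the condition $\sigma(N)=N$ reads $g^{-1}\sigma(g) \in \Stab_\G(M)$. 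By Remark \ref{stabiliser_of_stable}, $\Stab_\G(M)=\Delta(k)$, so $b_\sigma := g^{-1}\sigma(g)$ defines a map $b\colon\Si\to\Delta(k)$.

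First I would record two structural facts that keep the cocycle calculus clean: $\Delta$ is central in $\G$, and $\Si$ acts trivially on $\Delta(k)$ (since $d$ is $\Si$-compatible, $\sigma$ only permutes the blocks of a scalar tuple without changing it). Consequently there are no non-trivial coboundaries, so $H^1(\Si,\Delta(k)) = Z^1(\Si,\Delta(k)) = \Hom(\Si,\Delta(k))$. A direct computation using $\sigma(\tau(g)) = (\sigma\tau)(g)$ shows $b_{\sigma\tau} = b_\sigma\,\sigma(b_\tau)$, so $b \in Z^1(\Si,\Delta(k))$; and by its very definition $b_\sigma = g^{-1}\sigma(g)$ is a coboundary when viewed in $\G(k)$, so $b$ lies in the kernel of $H^1(\Si,\Delta(k)) \to H^1(\Si,\G(k))$. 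Next I would check independence of the choices: replacing $g$ by $g\delta$ with $\delta \in \Delta(k)$ (the ambiguity from $\Stab_\G(M)=\Delta(k)$), or replacing $N$ by $h\cdot N$ with $h \in \G^\Si(k)$, leaves $b$ unchanged on the nose, because $\Delta$ is central and $\Si$-fixed while $h^{-1}\sigma(h)=1$. This yields a well-defined map from the fibre to $\ker\big(H^1(\Si,\Delta(k)) \to H^1(\Si,\G(k))\big)$.

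It then remains to prove this map is a bijection of pointed sets. For surjectivity, given a class in the kernel I represent it by a cocycle $b \in Z^1(\Si,\Delta(k))$ that is a coboundary in $\G$, say $b_\sigma = g^{-1}\sigma(g)$, and set $N := g\cdot M$; then $\sigma(N) = \sigma(g)\cdot M = g\,b_\sigma\cdot M = g\cdot M = N$, since $b_\sigma \in \Delta(k)$ acts trivially, so $N \in \RepSirs(k)$ lies over $m$ and maps to $[b]$. For injectivity, if $N_i = g_i\cdot M$ give the same cocycle $b$, then $\sigma(g_i) = g_i b_\sigma$ and $h := g_2 g_1^{-1}$ satisfies $\sigma(h)=h$ (the central factors $b_\sigma$ cancel), so $h \in \G^\Si(k)$ and $N_2 = h\cdot N_1$, i.e.\ $\G^\Si\cdot N_1 = \G^\Si\cdot N_2$. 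The base point $\G^\Si\cdot M$ of the fibre corresponds to $g=1$, hence to the trivial class, making the bijection pointed. The only delicate point is bookkeeping the central and trivial-action properties of $\Delta$ carefully throughout; once these are in place, every verification reduces to a short cocycle manipulation, and there is no deeper obstacle.
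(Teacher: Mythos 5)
Your proof is correct and takes essentially the same route as the paper's: you attach to each point of the fibre the $\Delta(k)$-valued $1$-cocycle $b_\sigma = g^{-1}\sigma(g)$ (the paper's $\beta_{m'}(\sigma)=g\sigma(g^{-1})$ up to an inverse convention), and establish well-definedness, surjectivity and injectivity using exactly the same three ingredients — centrality of $\Delta$, triviality of the $\Si$-action on $\Delta(k)$, and $\Stab_{\G}(M)=\Delta(k)$ from Remark \ref{stabiliser_of_stable}. If anything, your injectivity step (comparing two arbitrary points via $h=g_2g_1^{-1}$) is slightly more complete than the paper's outline, which only checks that the preimage of the trivial class is the base point.
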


\begin{proof}
As the proof is similar to \cite[Proposition 3.3]{HS_galois}, we just give an outline of the main steps. By the Hilbert--Mumford criterion, $M$ is a $\theta$-stable $k$-representation and we recall from Remark \ref{stabiliser_of_stable} that the stabiliser of $M$ for the $\G$-action is $\Delta$. We give the  fibre $ f^{-1}_{\Si}(m)$ the base point given by the element $\G^{\Si} \cdot M$ and we will construct a map
\[ \beta : f^{-1}_{\Si}(m) \lra \ker\big(H^1(\Si,\Delta(k)) \lra H^1(\Si,\G(k))\big)\]
by associating to each $m'=\G^{\Si} \cdot M' \in f^{-1}_{\Si}(m)$, a normalised $\Delta(k)$-valued 1-cocycle $\beta_{m'}$ on $\Si$ which splits over $\G(k)$. As $m' \in f^{-1}_{\Si}(m)$, we know that $M$ and $M'$ lie in the same $\G$-orbit; thus there exists $g \in \G(k)$ such that $g \cdot M' = M$. For $\si \in \Si$, one verifies that $\beta_{m'}(\si):=g \si(g^{-1})$ stabilises $M$ and thus lies in $\Delta(k)$. Moreover,
\begin{enumerate}
\renewcommand{\labelenumi}{(\roman{enumi})}
\item $\beta_{m'}(1_{\Si}) = 1_\Delta$, and
\item $\beta_{m'}(\si_1\si_2) = \beta_{m'}(\si_1)\si_1(\beta_{m'}(\si_2))$ for all $\si_1,\si_2 \in \Si$;
\end{enumerate}
hence, $\beta$ is a normalised $1$-cocycle, which is, by definition, split over $\G(k)$. It is a straightforward computation to check that $\beta$ is well-defined; that is, the cohomology class of the 1-cocycle $\beta_{m'}$ is independent of the choice of the representatives $M$ and $M'$ of the orbits, and of the choice of element $g \in \G(k)$ such that $g \cdot M' = M$.

Let us show that $\beta$ is surjective. If $\gamma$ is a normalised $\Delta(k)$-valued 1-cocycle on $\Si$ which splits over $\G(k)$, then there exists $g \in \G(k)$ such that $\gamma(\sigma)=g\si(g^{-1})\in\Delta(k)$ for all $\si \in \Si$. Since $\Delta(k)$ acts trivially on $M$, we have $\si(g^{-1}\cdot M) = g^{-1}\cdot M$ for all $\si \in \Si$, and so $m':= \G^{\Si} \cdot g^{-1} \cdot M \in f^{-1}_\Si(m)$ and $\beta_{m'} = \gamma$. 

To prove that $\beta$ is injective, suppose that the $\Delta(k)$-valued $1$-cocycle $\beta_{m'}$ splits over $\Delta(k)$; that is, there exists $a\in \Delta(k)$ such that $\beta_{m'}(\si):=g\si(g^{-1}) =a\si(a^{-1})$ for all $\si\in\Si$. Then
\begin{enumerate}
\renewcommand{\labelenumi}{(\roman{enumi})}
\item $\si(a^{-1}g) = a^{-1}g$ for all $\si\in\Si$ (i.e. $a^{-1}g \in \G^{\Si}(k)$),
\item $(a^{-1}g) \cdot M' = M$;
\end{enumerate}
hence, $G^{\Si}(k) \cdot M' = G^{\Si}(k) \cdot M$, which completes the proof. 
\end{proof}

\noindent Since $\Si$  acts trivially on $\Delta \subset\G$, we have $H^1(\Si,\Delta(k))\simeq H^1(\Si,k^\times)$, where the latter is computed with respect to the trivial action of $\Si$ on $k^\times$. The next result gives a sufficient geometric condition for $f_\Si^{rs}$ to be injective.

\begin{cor}\label{prop fSigma inj}
Suppose that for each $\si \in \Si$, there is a vertex $v_\si \in V$ such that $\si(v_\si)=v_\si$, then the morphism $f_{\Si}^{rs}$ is injective.
\end{cor}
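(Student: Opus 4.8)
The plan is to reduce the injectivity of $f_\Si^{rs}$ to a triviality statement in group cohomology, and then to extract that triviality from the fixed-vertex hypothesis by a short direct computation. First I would invoke Proposition \ref{non_empty_fibres}: the non-empty fibres of $f_\Si^{rs}$ over $k$-points are in bijection with the pointed set $\ker\big(H^1(\Si,\Delta(k)) \to H^1(\Si,\G(k))\big)$, so it suffices to show that this kernel reduces to its base point. Indeed, if the kernel is trivial, then every non-empty fibre is a single point, and $f_\Si^{rs}$ is injective on $k$-points, which is the relevant notion of injectivity here.

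Next I would unwind the cohomology. Since $\Si$ acts trivially on $\Delta \cong \GG_m$, every $\Delta(k)$-valued $1$-cocycle $\gamma$ is simply a homomorphism $\Si \to k^\times$, which I write as $\gamma(\si) = (\gamma_\si I_{d_v})_{v \in V}$ with $\gamma_\si \in k^\times$; moreover, every $\Delta(k)$-valued coboundary is trivial (as $a\,\si(a^{-1}) = 1$ for $a \in \Delta(k)$), so $[\gamma]$ is the base point of $H^1(\Si,\Delta(k))$ precisely when $\gamma_\si = 1$ for all $\si$. To say $[\gamma]$ lies in the kernel means $\gamma$ splits over $\G(k)$: there is $g = (g_v)_{v\in V} \in \G(k)$ with $\gamma(\si) = g\,\si(g^{-1})$ for all $\si$. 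Reading this equality componentwise and using $\si(g^{-1}) = (g_{\si(v)}^{-1})_{v \in V}$, I obtain
\[ g_v\, g_{\si(v)}^{-1} = \gamma_\si I_{d_v} \qquad \text{for all } v \in V \text{ and } \si \in \Si. \]

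The key step is then to feed in the hypothesis. For each $\si \in \Si$, evaluating the last identity at the fixed vertex $v = v_\si$, so that $\si(v_\si) = v_\si$, gives $g_{v_\si} g_{v_\si}^{-1} = \gamma_\si I_{d_{v_\si}}$, that is $\gamma_\si = 1$. As this holds for every $\si \in \Si$, the cocycle $\gamma$ is trivial, so the kernel consists only of its base point and injectivity follows. I do not anticipate a genuine obstacle: the entire content is the observation that a single fixed vertex lets one read off the scalar $\gamma_\si$ directly from the splitting element $g$. The only point needing a little care is the bookkeeping that a kernel class is trivial in $H^1(\Si,\Delta(k))$ exactly when $\gamma_\si \equiv 1$, which is immediate because the $\Si$-action on $\Delta(k)$ is trivial and hence has no nontrivial coboundaries.
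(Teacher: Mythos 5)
Your proof is correct and follows essentially the same route as the paper: both reduce injectivity to the triviality of $\ker\big(H^1(\Si,\Delta(k)) \to H^1(\Si,\G(k))\big)$ via Proposition \ref{non_empty_fibres}, write a kernel class as $\gamma(\si) = g\,\si(g^{-1})$ with values in $\Delta(k)$, and evaluate at the fixed vertex $v_\si$ to force the scalar to be $1$. Your extra remark that $\Delta(k)$-valued coboundaries are trivial (so triviality of the class is equivalent to $\gamma_\si \equiv 1$) is a small point the paper leaves implicit, but the argument is the same.
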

\begin{proof}
As $k$ is algebraically closed, it suffices to check that  $f_{\Si}^{rs}$ is injective on closed points and for this we can use the description of the fibres given by Proposition \ref{non_empty_fibres}. Thus, it suffices to show that $H^1(\Si,\Delta(k)) \lra H^1(\Si,\G(k))$ is injective. By definition, an element in the kernel of that map is the cohomology class of a $1$-cocycle $\alpha: \Si \lra\Delta(k)$ of the form $\alpha(\si)=g\si(g^{-1})$ for some $g=(g_v)_{v\in V}\in\G(k)$. As $\alpha$ is $\Delta(k)$-valued, for each $\si \in \Si$, there exists $t_\sigma \in \GG_m(k)$ such that $(\alpha(\si)_v)_{v \in V} := (g_v g^{-1}_{\si(v)})_{v \in V} = (t_\sigma I_{d_v})_{v\in V}$. In particular, for $v = v_\si$, we have $t_\sigma I_{d_v} =g_{v} g^{-1}_{\si(v)} = g_{v} g^{-1}_{v} = I_{d_v}$; that is, $t_\sigma = 1$ for all $\sigma$. Therefore, every such element $\alpha$ is trivial by our assumptions on the $\Si$-action on $V$.
\end{proof}

Next, we can use a so-called type map (analogously to the type map for Galois actions in \cite[Proposition 3.6]{HS_galois}) to determine whether $f_\Si^{rs}$ is surjective.

\begin{const}[The type map]\label{constr_type_map_def}
Let us introduce a map
\[ \cT : (\Mods)^{\Si}(k) \lra H^2(\Si,\Delta(k)), \]
which we call the type map. Since it is analogous to the construction of the type map for Galois actions in \cite[Proposition 3.6]{HS_galois}, we only outline the construction.

Let $m = \G \cdot M \in (\Mods)^{\Si}(k)$; then as $m$ is preserved by $\Si$, we have that, for all $\si \in \Si$, there exists $u_\si \in \G(k)$ such that $u_\si \cdot \si(M) = M$. For $\si_1,\si_2 \in \Si$, one observes that the element $c_m(\si_1,\si_2):=u_{\si_1}\si_1(u_{\si_2}) u_{\si_1\si_2}^{-1}$ stabilises $M$ and thus lies in $\Delta(k)$ by Remark \ref{stabiliser_of_stable}, as $M$ is $\theta$-stable. To verify that $c_m$ is a $\Delta(k)$-valued 2-cocycle of $\Si$, it remains to check that for $\si_i \in \Si$ for $i=1,2,3$, we have
\[c_m(\si_1,\si_2)c(\si_1\si_2,\si_3) = \si_1(c_m(\si_2,\si_3))c_m(\si_1,\si_2\si_3),\]
which is a consequence of the fact that $\Delta(k)$ is a $\Si$-invariant central subgroup of $\G(k)$. We define $\cT(m):=[c_m]$ and leave it to the reader to check that $\cT$ is well-defined (that is, the cohomology class of this 2-cocycle is independent of the choice of representative $M$ of the orbit and the elements $u_\sigma$ such that $u_\sigma \cdot \sigma(M) = M$).
\end{const}

\begin{rmk}
The type map $\cT$ factors via the connecting homomorphism
\[ \delta : H^1(\Si, \Gbar(k)) \lra H^2(\Si, \Delta(k))  \]
in the long exact sequence \eqref{les}, where $\Gbar := \G/\Delta$. Indeed, for $m  \in (\Mods)^{\Si}(k)$ and a family ${u}_\si$ of elements in $\G$ indexed by $\si \in \Si$ as above, we claim that the corresponding family $\ov{u}_\si$ of elements in $\Gbar$ indexed by $\si \in \Si$ is a 1-cocycle. The cocycle condition $ \ov{u}_{\si_1\si_2} = \ov{u}_{\si_1}\si_1(\ov{u}_{\si_2})$ follows from the fact that $u_{\si_1}\si_1(u_{\si_2}) u_{\si_1\si_2}^{-1} \in \Delta(k)$. One can check that the cohomology class $[\ov{u}]$ is independent of the choices and so we have a well-defined map $\widetilde{\cT}:(\Mods)^{\Si}(k) \ra H^1(\Si, \Gbar(k))$ given by $m\lmt [\ov{u}]$. By definition of the connecting homomorphism $\delta$, one has $\delta\circ\widetilde{\cT} = \cT$.
\end{rmk}

Let us explain the relationship between the type map and the image of $ f_{\Si}^{rs}$.

\begin{prop}\label{prop_basic_type_map}
We have $(\im f_{\Si}^{rs})(k) \subset \cT^{-1}([1])$.
\end{prop}
\begin{proof}
If $m=\G \cdot M \in \im f_{\Si}^{rs}(k)$, then we can pick a representative $M \in \Rep^{\Si}(k)$ and let $u_\si:=1_{\G}$ for all $\si \in \Si$. Then $u_\si \cdot \si(M)=M$ and so $c_m = 1 \in H^2(\Si,\Delta(k))$ and $\cT(m) = [1]$. 
\end{proof}

\begin{ex}\label{ex H2 cyclic gp}
If $\Si$ is a cyclic group of order $n$, then $$H^2(\Si,\Delta(k)) \simeq H^2(\Z/n\Z, k^*) \simeq k^*/(k^*)^{(n)} \simeq \{1\}.$$
\end{ex}

In Example \ref{ex f not surj cov}, we will see that one can have $(\im f_{\Si}^{rs})(k) \neq \cT^{-1}([1])$. The map $f_\Si^{rs}$ is not surjective in general and, to account for that failure, we will alter the algebraic $\Si$-actions on $\Rep$ and $\G$ by using a modifying family of elements in $\G(k)$ in the following sense. We recall that $\Phi$ and $\Psi$ denote the original $\Si$-actions on $\Rep$ and $\G$, respectively, and that $\Gbar := \G /\Delta$.

\begin{defn}[Modifying family]\label{modifying_family_def}
A modifying family of elements in $\G(k)$ is a tuple $u:=(u_\si)_{\si\in\Si}$ 
of elements $u_\sigma \in \G(k)$ indexed by $\si\in \Si$ such that 
\begin{enumerate}
\item $u_{1_{\Si}} = 1_{\G(k)}$,
\item $\ov{u}:=(\ov{u}_\si)_{\si\in\Si}$ is a $\Gbar$-valued 1-cocycle.
\end{enumerate}
We let $c_u(\si_1,\si_2):=u_{\si_1} \Psi_{\si_1}(u_{\si_2}) u_{\si_1\si_2}^{-1} \in \G(k)$ denote the associated $\Delta(k)$-valued $2$-cocycle.
\end{defn}

\begin{lemma}\label{new_Sigma_action}
Let $u:=(u_\si)_{\si\in\Si}$ be a modifying family of elements in $\G(k)$ indexed by $\Si$. 
Then we can define modified $\Si$-actions 
\[\Phi^u: \Si \times \Rep \lra \Rep;\quad (\si,\phi) \lmt \Phi^u_\si(\phi):=u_\si \cdot \Phi_\si(\phi)\] 
and
\[\Psi^u: \Si \times\G \lra \G;\quad (\si,g) \lmt \Psi^u_\si(g):=\Ad_{u_\si}\, \Psi_\si(g)\] 
which are compatible in the sense of \eqref{cov action comp} and such that the induced 
$\Si$-action on $\Mod$ coincides with that of Proposition \ref{prop induced action on quotient in covariant setting}.
\end{lemma}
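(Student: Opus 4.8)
The plan is to verify in turn that $\Phi^u$ and $\Psi^u$ are group actions, that they satisfy the compatibility \eqref{cov action comp}, and finally that the resulting $\Si$-action on $\Mod$ agrees with the one from Proposition \ref{prop induced action on quotient in covariant setting}. The conceptual point throughout is that the defect preventing the family $u$ from being a strict cocycle is measured by $c_u(\si_1,\si_2)\in\Delta(k)$, and $\Delta(k)$ is central in $\G$ and acts trivially on $\Rep$; this is exactly why the $2$-cocycle condition of Definition \ref{modifying_family_def} is imposed.

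First I would check the action axiom for $\Psi^u$. Since each $\Psi_{\si_1}$ is a group automorphism of $\G$, one has $\Psi_{\si_1}(\Ad_{u_{\si_2}}\Psi_{\si_2}(g)) = \Ad_{\Psi_{\si_1}(u_{\si_2})}\Psi_{\si_1\si_2}(g)$, whence $\Psi^u_{\si_1}(\Psi^u_{\si_2}(g)) = \Ad_{u_{\si_1}\Psi_{\si_1}(u_{\si_2})}\Psi_{\si_1\si_2}(g)$. Rewriting $u_{\si_1}\Psi_{\si_1}(u_{\si_2}) = c_u(\si_1,\si_2)\,u_{\si_1\si_2}$ and using that $c_u(\si_1,\si_2)\in\Delta(k)$ is central, so that conjugation by it is trivial, gives $\Psi^u_{\si_1}\circ\Psi^u_{\si_2} = \Psi^u_{\si_1\si_2}$; together with $u_{1_\Si}=1$ this is the action axiom. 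For $\Phi^u$ I would argue analogously, using the original compatibility \eqref{cov action comp} to move $\Phi_{\si_1}$ past the action of $u_{\si_2}$, which yields $\Phi^u_{\si_1}(\Phi^u_{\si_2}(\phi)) = \big(u_{\si_1}\Psi_{\si_1}(u_{\si_2})\big)\cdot\Phi_{\si_1\si_2}(\phi)$; rewriting $u_{\si_1}\Psi_{\si_1}(u_{\si_2}) = c_u(\si_1,\si_2)\,u_{\si_1\si_2}$ and using this time that $\Delta(k)$ acts trivially on $\Rep$ then gives $\Phi^u_{\si_1}\circ\Phi^u_{\si_2}=\Phi^u_{\si_1\si_2}$.

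Next I would verify the compatibility \eqref{cov action comp} for the modified actions by expanding both sides directly. On the one hand $\Phi^u_\si(g\cdot M) = \big(u_\si\Psi_\si(g)\big)\cdot\Phi_\si(M)$ by the original compatibility, and on the other $\Psi^u_\si(g)\cdot\Phi^u_\si(M) = \big(u_\si\Psi_\si(g)u_\si^{-1}\big)\cdot\big(u_\si\cdot\Phi_\si(M)\big) = \big(u_\si\Psi_\si(g)\big)\cdot\Phi_\si(M)$, so the two sides agree. This step is a one-line cancellation and uses no cocycle hypothesis.

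Finally, to see that the induced action on $\Mod$ is unchanged, I would use that the GIT quotient map $\pi\colon\Rep^{\chi_\theta-ss}\to\Mod$ is constant on $\G$-orbits. Since $\Phi^u_\si(M) = u_\si\cdot\Phi_\si(M)$ lies in the same $\G$-orbit as $\Phi_\si(M)$, we obtain $\pi\circ\Phi^u_\si = \pi\circ\Phi_\si$ on the semistable locus; the universal property of the categorical quotient (as used in Proposition \ref{prop induced action on quotient in covariant setting}) then forces the induced morphism $(\Phi^u)'_\si$ to coincide with $\Phi'_\si$ for every $\si$, so the two $\Si$-actions on $\Mod$ are identical. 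I do not anticipate a genuine obstacle: each step is a routine manipulation, and the only substantive input is the placement of $c_u$ in the central, trivially-acting subgroup $\Delta(k)$, which is built into the definition of a modifying family.
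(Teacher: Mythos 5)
Your proof is correct and follows essentially the same route as the paper: the paper declares the verification that $\Phi^u$ and $\Psi^u$ are compatible $\Si$-actions to be straightforward (you have simply written out the cocycle/centrality computation in full), and your final step --- observing that $\pi\circ\Phi^u_\si = \pi\circ\Phi_\si$ because $u_\si\cdot\Phi_\si(M)$ lies in the $\G$-orbit of $\Phi_\si(M)$, then invoking uniqueness from the universal property of the categorical quotient --- is precisely the commutative diagram argument in the paper's proof.
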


\begin{proof}
It is straightforward to check that $\Phi^u$ and $\Psi^u$ are compatible $\Si$-actions. By using the universal property of the categorical quotient, one proves there is an induced $\Si$-action on the quotient $\Mod=\Rep/\!/_{\chi_\theta}\G$; this coincides with the $\Si$-action defined in Proposition \ref{prop induced action on quotient in covariant setting}, as the following diagram commutes
\[\xymatrix{\Rep^{\chi_\theta-ss} \ar[r]^{\Phi_\sigma} \ar[d]^{\pi} & \Rep^{\chi_\theta-ss} \ar[d]^{\pi} \ar[r]^{ u_\si \cdot } & \Rep^{\chi_\theta-ss} \ar[dl]^{\pi}\\
\Rep/\!/_{\chi_\theta}\G \ar[r]^{\Phi'_\sigma} & \Rep/\!/_{\chi_\theta}\G}\]
following the commutativity of Diagram \eqref{induced_action_on_quotient}.
\end{proof}

For a modifying family $u$, let ${_u}\Rep^{\Si}$ and ${_u}\G^{\Si}$ denote the fixed loci for the $\Si$-actions $\Phi^u$ and $\Psi^u$. Then ${_u}\Rep^{\Si}$ is a closed subscheme of $\Rep$ 
and ${_u}\G^{\Si}$ is a closed subgroup of $\G$, and moreover, the ${_u}\G^{\Si}$-action preserves ${_u}\Rep^{\Si}$. 

\begin{lemma}\label{lemma gp fixed locus reductive}
For a modifying family $u =(u_\si)_{\si \in \Si}$ of elements in $\G(k)$, the fixed locus for the $u$-modified $\Si$-action ${_u}\G^{\Si}$ is a smooth connected reductive group.
\end{lemma}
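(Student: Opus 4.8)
The plan is to identify the fixed locus ${_u}\G^{\Si}$ with the unit group of a finite-dimensional associative $k$-algebra. Smoothness and connectedness then come for free, and reductivity---the real content---reduces to a semisimplicity statement.

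First I would realise the $u$-modified action by inner automorphisms of a single general linear group. Put $W:=\bigoplus_{v\in V}W_v$ with $W_v=k^{d_v}$, so that $\G=\prod_{v\in V}\GL(W_v)$ is the subgroup of $\GL(W)$ commuting with the idempotents $e_v\in\End_k(W)$ projecting onto $W_v$. As $d$ is $\Si$-compatible we have $d_v=d_{\si(v)}$, so $\Psi_\si$ is conjugation by a permutation operator of $W$ interchanging the blocks $W_v$ and $W_{\si(v)}$; multiplying this operator by $u_\si$ gives $R_\si\in\GL(W)$ with $\Psi^u_\si=\Ad_{R_\si}$. Conjugation by $R_\si$ permutes the idempotents via $R_\si e_v R_\si^{-1}=e_{\si(v)}$, and the defining relation of a modifying family (Definition \ref{modifying_family_def}) shows that the $R_\si$ compose up to the central scalars $c_u(\si_1,\si_2)\in\Delta(k)$.

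Next, let $\cA\subset\End_k(W)$ be the subalgebra generated by the $R_\si$ and the $e_v$; the two displayed relations exhibit $\cA$ as a crossed product of $k^V=\bigoplus_v k\,e_v$ by $\Si$, twisted by $c_u$, and $W$ is a faithful $\cA$-module. A grading-preserving operator lies in the fixed locus of $\Psi^u$ exactly when it commutes with every $R_\si$, so the fixed subscheme of $\G$ is cut out inside the affine space $E:=\prod_v\End(W_v)$ by the linear equations $R_\si g=gR_\si$; that is, it equals $\G\cap C$ where $C:=\End_\cA(W)$ is the commutant of $\cA$. Since the inverse in $E$ of an element of $C$ again commutes with the $R_\si$, one has $\G\cap C=C^\times$, so ${_u}\G^{\Si}$ is the unit group scheme of the finite-dimensional algebra $C$. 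As $C^\times$ is the principal open subset of the affine space $C$ on which the determinant of left multiplication is invertible, it is smooth, and being a nonempty open subscheme of an irreducible variety it is connected; this settles smoothness and connectedness in every characteristic, and for $u=1$ it recovers the isomorphism $\G^{\Si}\cong\mathbf{G}_{Q/\Si,\tilde{d}}$ of Proposition \ref{lemma iso GQ with G for Q mod Sigma}.

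The main obstacle is reductivity. For this I would invoke Maschke's theorem for (twisted) crossed products: since $k^V$ is semisimple and $|\Si|$ is invertible in $k$, the algebra $\cA$ is semisimple. Hence $W$ is a semisimple $\cA$-module, its commutant $C=\End_\cA(W)$ is a semisimple $k$-algebra, and---using that $k$ is algebraically closed, so the relevant endomorphism division algebras are $k$---the isotypic decomposition of $W$ yields $C\cong\prod_i\Mat_{m_i}(k)$. Therefore
\[ {_u}\G^{\Si}\cong C^\times\cong\prod_i\GL_{m_i}\]
is a smooth connected reductive group, as claimed. The only hypothesis used here is the invertibility of $|\Si|$ in $k$: if $\mathrm{char}(k)$ divided $|\Si|$, then $\cA$ could fail to be semisimple and $C^\times$ could acquire a unipotent radical (already when $Q$ is a single vertex and $u$ is a modular representation of $\Si$), so this assumption is precisely what secures reductivity, whereas smoothness and connectedness are unconditional.
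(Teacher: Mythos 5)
Your proof is correct, granted the hypothesis you flag at the end (that $|\Si|$ is invertible in $k$), and it takes a genuinely different route from the paper's. The paper also begins by making the fixed locus explicit, but does so after choosing representatives $v_1,\dots,v_r$ of the $\Si$-orbits in $V$: it identifies ${_u}\G^{\Si}$ with a product of centralisers $\prod_{i=1}^r C_{\GL_{d_{v_i}}}(\{u_{\si,v_i} : \si(v_i)=v_i\})$, proves connectedness exactly as you do (a centraliser in $\Mat_{n\times n}$ is a linear subspace, and the invertible locus is a principal open subset), and then obtains smoothness and reductivity by citing \cite[Propositions A.8.11 and A.8.12]{CGP} on fixed loci of linearly reductive group schemes acting on smooth, respectively reductive, group schemes. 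Your crossed-product argument replaces those citations by algebra: realising ${_u}\G^{\Si}$ as the unit group of the commutant $C=\End_{\cA}(W)$ gives smoothness and connectedness for free, and reductivity follows from Maschke for the twisted crossed product together with semisimplicity of the commutant of a semisimple module; this also produces the explicit description ${_u}\G^{\Si}\cong\prod_i\GL_{m_i}$, which the paper's proof does not make visible. Two harmless imprecisions: $\cA$ is a homomorphic image of the crossed product of $k^V$ by $\Si$ twisted by $c_u$, not the crossed product itself (quotients of semisimple algebras are semisimple, so nothing is lost), and with the paper's conventions conjugation by $R_\si$ sends $e_v$ to $e_{\si^{-1}(v)}$ rather than $e_{\si(v)}$.

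Your closing remark about the characteristic deserves emphasis, because it is not a restriction peculiar to your method but a hypothesis the lemma actually needs, and one that the paper's proof uses implicitly: the constant group scheme $\Si$ is linearly reductive over $k$ exactly when $|\Si|$ is invertible in $k$, so the appeal to \cite{CGP} is legitimate only under that assumption (the paper's paraphrase of A.8.11 as applying to \emph{any} finite group scheme drops this hypothesis, and $\mathrm{char}(k)\neq 2$ is assumed only from Section \ref{branes_section} onwards). Moreover the hypothesis cannot be removed: take $Q$ with one vertex and two loops exchanged by an involution $\si$, $d=2$ and $\mathrm{char}(k)=2$, and set $u_\si=\left(\begin{smallmatrix}1&1\\0&1\end{smallmatrix}\right)$; then $\Psi_\si$ is trivial and $u_\si^2=I_2\in\Delta(k)$, so $u$ is a modifying family, yet ${_u}\G^{\Si}=C_{\GL_2}(u_\si)=\left\{\left(\begin{smallmatrix}c&b\\0&c\end{smallmatrix}\right) : c\neq 0\right\}\cong\GG_m\times\GG_a$ is not reductive. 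So both proofs need the assumption; yours has the merit of stating it and of locating exactly where it enters.
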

\begin{proof}
The fixed locus  ${_u}\G^{\Si}$ is the subgroup of elements $g = (g_v)_{v \in V}$ such that
$ g_v = u_{\sigma,v} g_{\sigma(v)} u_{\sigma,v}^{-1} $ for all $\sigma \in \Sigma$.
If we pick representatives $v_1, \dots , v_r$ of the $\Sigma$-orbits in $V$, then 
\[ {_u}\G^{\Si} \cong \prod_{i = 1}^r C_{\GL_{d_{v_i}}}(\{u_{\sigma,v_i} : \sigma \in \Sigma, \sigma(v_i) = v_i \})\]
is isomorphic to a product of centraliser subgroups in general linear groups. 

For any subset $S \subset \Mat_{n \times n}$, the centraliser $C_{\Mat_{n \times n}}(S)$ is a vector subspace of $\Mat_{n \times n}$, and thus is connected. For $S \subset \GL_n$, it follows that $C_{\GL_n}(S)$ is also connected, as this is the non-vanishing locus of a single polynomial (the determinant) in $C_{\Mat_{n \times n}}(S)$. Therefore, ${_u}\G^{\Si}$ is connected. By \cite[Proposition A.8.11]{CGP}, the fixed locus of any finite group scheme over $k$ acting on any smooth $k$-scheme is smooth; hence, ${_u}\G^{\Si}$ is smooth. Furthermore, by \cite[Proposition A.8.12]{CGP}, for any linearly reductive group scheme $H$ acting on a reductive group scheme $G$ over $k$, the connected component of the identity of the fixed locus $G^H$ with its reduced scheme structure is reductive; hence, it follows that ${_u}\G^{\Si}$ is reductive.
\end{proof}

\begin{rmk}
In fact, the above argument shows that, for any finite group $\Sigma$ acting on a product of general linear groups over a base field $k$, the fixed locus is smooth, connected and reductive (even if $k$ is not algebraically closed). However, this statement is not true for an arbitrary reductive group scheme over $k$ in positive characteristic, as it is not necessarily true that the fixed locus is connected.
\end{rmk}

We can now give an analogue of Propositions \ref{prop constr f cov}, \ref{non_empty_fibres} and \ref{prop_basic_type_map} for the $\Si$-action given by a modifying family $u$.

\begin{thm}\label{new_fSigma}
Let $u:=(u_\si)_{\si\in\Si}$ be a modifying family of elements in $\G(k)$ indexed by $\Sigma$. Then 
\begin{equation}\label{eqn Si u theta ss}
({_u}\Rep^{\Si})^{({_u}\G^{\Si},\chi_\theta)-ss}=  {_u}\Rep^{\Si}  \times_{\Rep} \Rep^{(\G,\chi_\theta)-ss}
\end{equation}
and there is a morphism
\[ {_u}f_{\Si} : {_u}\Rep^{\Si}/\!/_{\chi_\theta}\ {_u}\G^{\Si} \lra (\Mod)^{\Sigma}.\]
Furthermore, there is a moduli space of $(\Si,u,\theta)$-regularly stable representations of $Q$ given by
\[ \RepSiurs/{_u}\G^{\Si} = {_u}f_{\Si}^{-1}( \Mods), \]
where $\RepSiurs := {_u}\Rep^{\Si} \times_{\Rep} \Rep^{(\G,\chi_\theta)-s}$,
and if we denote the restriction of ${_u}f_{\Si}$ to this open subscheme by
\[ {_u}f_{\Si}^{rs} : \RepSiurs/{_u}\G^{\Si} \lra (\Mods)^{\Si}, \]
then the image of ${_u}f_{\Si}^{rs}$ is a closed subscheme of $(\Mods)^{\Si}$. 
The non-empty closed fibres of ${_u}f_{\Si}^{rs}$ are in bijection with the pointed set $$\ker\big(H^1(\Si,\Delta(k)) \lra H^1_{u}(\Si,\G(k))\big)$$ where $\Si$ acts on $\G$ via the action $\Psi^u$ defined in Lemma \ref{new_Sigma_action}. Moreover, we have  $ \im {_u}f_{\Si}^{rs}(k) \subset \cT^{-1}([c_u])$, where $[c_u]$ is the cohomology class of by the $\Delta(k)$-valued $2$-cocycle $c_u(\si_1,\si_2):=u_{\si_1} \Psi_{\si_1}(u_{\si_2}) u_{\si_1\si_2}^{-1}$ on $\Si$ defined by $u$. 
\end{thm}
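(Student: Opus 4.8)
The plan is to establish Theorem \ref{new_fSigma} by systematically adapting the arguments already developed for the unmodified action, using the key observation that the modified action $\Psi^u$ makes $\Delta(k)$ into a $\Si$-invariant central subgroup just as before, so the cohomological machinery carries over with $\Si$ acting on $\G$ via $\Psi^u$ rather than $\Psi$.

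First I would prove the equality \eqref{eqn Si u theta ss}. By Lemma \ref{lemma gp fixed locus reductive}, the group ${_u}\G^{\Si}$ is reductive, so GIT quotients of its action on ${_u}\Rep^{\Si}$ make sense. The containment of semistable loci follows from an analogue of Lemma \ref{lemma compare ss}: I would check that a $\Si$-fixed point (for the modified action) is $({_u}\G^{\Si},\chi_\theta)$-semistable if and only if it is $(\G,\chi_\theta)$-semistable. The nontrivial direction again uses Kempf's canonical destabilising parabolic $P_M$, which is unique and hence invariant under the modified $\Si$-action; one then intersects with ${_u}\G^{\Si}$ to produce a destabilising parabolic inside the fixed subgroup. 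Here I must verify that $\Si$-invariance of $\chi_\theta$ and of a chosen length function still forces $P_M$ to be stable under $\Psi^u$, using that $\Ad_{u_\si}$ conjugates parabolics to parabolics. Granting this, \eqref{eqn Si u theta ss} is an equality of open subschemes checked on closed points, and the closed immersion $({_u}\Rep^{\Si})^{({_u}\G^{\Si},\chi_\theta)-ss}\hookrightarrow \Rep^{(\G,\chi_\theta)-ss}$ induces ${_u}f_{\Si}$ via the universal property of the categorical quotient, exactly as in Proposition \ref{prop constr f cov}. That the image lands in $(\Mod)^{\Si}$ follows because the modified action induces the same $\Si$-action on $\Mod$ by Lemma \ref{new_Sigma_action}.

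Next I would treat the regularly stable locus. Since $(\G,\chi_\theta)$-stability implies $({_u}\G^{\Si},\chi_\theta)$-stability, the fibre product $\RepSiurs$ sits inside the GIT stable locus and admits a geometric quotient equal to ${_u}f_{\Si}^{-1}(\Mods)$. For the fibre computation I would mimic Proposition \ref{non_empty_fibres}: given $m = \G\cdot M \in \Mods(k)$ with representative $M$ fixed by the modified action, and another fibre point $\G^{\Si}_u\cdot M'$ with $g\cdot M' = M$, I set $\beta_{m'}(\si) := g\,\Psi^u_\si(g^{-1})$. One checks this stabilises $M$, hence lies in $\Delta(k)$ by Remark \ref{stabiliser_of_stable}, and defines a normalised $1$-cocycle for the $\Psi^u$-action that splits over $\G(k)$; the surjectivity and injectivity arguments transcribe verbatim, yielding the bijection with $\ker\big(H^1(\Si,\Delta(k)) \to H^1_{u}(\Si,\G(k))\big)$. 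That the image of ${_u}f_{\Si}^{rs}$ is closed in $(\Mods)^{\Si}$ follows from properness of the map on the stable locus, where the fixed-point subvariety source is closed and the geometric quotient is separated.

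Finally, for the type-map statement I would take $m = \G\cdot M \in \im {_u}f_{\Si}^{rs}(k)$ and choose a representative $M \in {_u}\Rep^{\Si}(k)$, so that $\Phi^u_\si(M) = M$, i.e. $u_\si\cdot \Phi_\si(M) = M$ for all $\si$. Taking the original-action elements $u_\si$ as the witnesses in the construction of the type map $\cT$ in Proposition \ref{type_map_def_qaut}, the associated $2$-cocycle is precisely $c_m(\si_1,\si_2) = u_{\si_1}\Psi_{\si_1}(u_{\si_2})u_{\si_1\si_2}^{-1} = c_u(\si_1,\si_2)$, giving $\cT(m) = [c_u]$ and hence $\im {_u}f_{\Si}^{rs}(k) \subset \cT^{-1}([c_u])$. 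The main obstacle I anticipate is the semistability comparison in the first step: verifying that Kempf's canonical parabolic remains invariant under the \emph{modified} action requires care, since $\Psi^u$ differs from $\Psi$ by the inner automorphisms $\Ad_{u_\si}$, and one must confirm that the length function and Hilbert--Mumford functional can still be chosen $\Psi^u$-invariant (for instance by working with the semidirect product $\G \rtimes_{\Psi^u}\Si$) so that the uniqueness of $P_M$ delivers $\Si$-invariance for the modified action.
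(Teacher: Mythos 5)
Your proposal takes exactly the approach the paper itself takes: the paper's entire proof of Theorem \ref{new_fSigma} is the single sentence that the result follows from ``simple modifications'' of Propositions \ref{prop constr f cov}, \ref{non_empty_fibres} and \ref{type_map_def_qaut}, and your elaborations of those modifications are the intended ones and are correct. In particular: the Kempf-parabolic argument for the semistability comparison (with the twisted action handled by choosing a $\Psi^u$-invariant length function, e.g.\ by working in $\G\rtimes_{\Psi^u}\Si$), the construction of ${_u}f_\Si$ via the universal property of the categorical quotient together with Lemma \ref{new_Sigma_action}, the fibre computation via the normalised $1$-cocycle $\beta_{m'}(\si)=g\,\Psi^u_\si(g^{-1})\in\Delta(k)$, and the type-map containment obtained by taking the $u_\si$ themselves as witnesses so that $c_m=c_u$, all transcribe correctly. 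You also rightly flag that the delicate point in the first step is that $P_M\cap{_u}\G^{\Si}$ must still produce a destabilising parabolic of the \emph{twisted} fixed group; the final displayed computation in the proof of Lemma \ref{lemma compare ss} uses the permutation form of $\Psi$ and so genuinely needs the modification you describe.

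The one genuine gap is your justification of the claim that $\im{_u}f^{rs}_\Si$ is closed in $(\Mods)^{\Si}$. The principle you invoke --- closed source subvariety, proper (even free) action, separated geometric quotient, hence closed image --- is false in general: for the free proper $\GG_m$-action on $\AA^2\setminus\{0\}$ with quotient $\PP^1$, the closed subvariety $\{y=1\}$ has image $\PP^1\setminus\{[1{:}0]\}$, which is not closed. The point is that $\RepSiurs$ is only ${_u}\G^{\Si}$-invariant, not $\G$-invariant, so closedness of its image under the quotient map is equivalent to closedness of the sweep $\G\cdot\RepSiurs$ inside $\Rep^{(\G,\chi_\theta)-s}$, and that does not follow from properness of the $\G$-action alone. (To be fair, the paper's one-line proof is also silent on this claim.) A correct argument needs a different input: for instance, one could aim to show that ${_u}f_\Si$ itself is proper, by proving that the induced morphism on affine GIT quotients $\spec k[{_u}\Rep^{\Si}]^{{_u}\G^{\Si}}\lra\spec k[\Rep]^{\G}$ is finite and using that both semistable quotients are projective over their affine quotients, after which properness of ${_u}f^{rs}_\Si$ follows by base change over the open subscheme $\Mods$ and the image is closed; alternatively, one could prove stability of the image under specialisation directly, over a discrete valuation ring, using the cohomological description of the image. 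As written, this one step of your proof does not go through.
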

\begin{proof}
The result follows from simple modifications of the arguments given in the proofs of Propositions \ref{prop constr f cov}, \ref{non_empty_fibres} and \ref{prop_basic_type_map}.
\end{proof}

\subsection{A decomposition of the fixed locus}

In this section, our goal is to give a description of the fixed locus $(\Mods)^{\Si}$ for the action of $\Si \subset \Aut^+(Q)$ in terms of the images of morphisms ${_u}f_{\Si}^{rs}$ defined by modifying families $u$ (\textit{cf.}\ Theorem \ref{new_fSigma}). We will do this in two stages: first by describing a given fibre of the type map $\cT$ as a disjoint union of images of such morphisms, and then by taking the union over all fibres of $\cT$ in order to produce a decomposition of the fixed locus  $(\Mods)^{\Si}$. Finally, we will illustrate this decomposition with some simple examples. Before giving the decomposition result for a fibre of the type map, we need a few lemmas. 

\begin{lemma}\label{equal cocycles}
Let $u'$ be a modifying family of elements of $\G(k)$ such that $c_{u'}$ 
is cohomologous to a $\Delta(k)$-valued 2-cocycle $c$, then there is a family 
$(a'_\si)_{\si\in\Si}$ of elements of $\Delta(k)$ such that $u''_\si:=a'_\si u'_\si$ is 
a modifying family of elements in $\G(k)$ with
\begin{enumerate}
\item $\Phi^{u''}_\si=\Phi^{u'}_\si$ and $\Psi^{u''}_\si = \Psi^{u'}_\si$ for all $\si\in\Si$;
\item $c_{u''} = c$ as $\Delta$-valued 2-cocycles.
\end{enumerate}
\end{lemma}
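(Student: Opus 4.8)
The plan is to exploit two properties of $\Delta(k)$: it acts trivially on $\Rep$, and it is a central subgroup of $\G(k)$ on which $\Si$ acts trivially (the latter because $d$ is $\Si$-compatible). Together these will make part (1) automatic for \emph{any} normalised $\Delta(k)$-valued family, and reduce part (2) to identifying a suitable coboundary.

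First I would record the effect of left-multiplying $u'$ by an arbitrary family $(a'_\si)_{\si \in \Si}$ of elements of $\Delta(k)$ with $a'_{1_\Si}=1_{\G(k)}$, setting $u''_\si := a'_\si u'_\si$. Since $\Delta(k)$ acts trivially on $\Rep$, we have $\Phi^{u''}_\si(\phi) = (a'_\si u'_\si)\cdot \Phi_\si(\phi) = u'_\si \cdot \Phi_\si(\phi) = \Phi^{u'}_\si(\phi)$; and since $a'_\si$ is central in $\G(k)$, conjugation by it is trivial, so $\Psi^{u''}_\si = \Ad_{a'_\si u'_\si}\Psi_\si = \Ad_{u'_\si}\Psi_\si = \Psi^{u'}_\si$. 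This establishes (1) for every such family, independently of the precise choice of the $a'_\si$, so all the remaining work goes into choosing them to achieve (2).

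Next I would compute $c_{u''}$ in terms of $c_{u'}$. Expanding $c_{u''}(\si_1,\si_2) = u''_{\si_1}\Psi_{\si_1}(u''_{\si_2})(u''_{\si_1\si_2})^{-1}$ and using that $\Psi_{\si_1}$ fixes $a'_{\si_2}\in\Delta(k)$ (trivial $\Si$-action on $\Delta$) together with the centrality of $\Delta(k)$, one obtains
\[ c_{u''}(\si_1,\si_2) = \big(a'_{\si_1}\, a'_{\si_2}\, (a'_{\si_1\si_2})^{-1}\big)\, c_{u'}(\si_1,\si_2). \]
The prefactor is precisely the value at $(\si_1,\si_2)$ of the coboundary of the $1$-cochain $\si\mapsto a'_\si$ for the trivial $\Si$-action, so writing it as $\delta a'$ we get $c_{u''} = (\delta a')\, c_{u'}$. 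In particular $c_{u''}$ is again $\Delta(k)$-valued and, as $a'_{1_\Si}=1$ forces $u''_{1_\Si}=1$, the tuple $u''$ is a modifying family in the sense of Definition \ref{modifying_family_def}.

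Finally I would choose $a'$. Since $c_{u'}$ is cohomologous to the (necessarily normalised) $\Delta(k)$-valued $2$-cocycle $c$, there is a $\Delta(k)$-valued $1$-cochain $b$ with $c = (\delta b)\, c_{u'}$; setting $a' := b$ then yields $c_{u''} = (\delta b)\, c_{u'} = c$, which is (2). The only delicate point—and the main, if mild, obstacle—is that $u''$ must be a genuine modifying family, i.e.\ we need $b(1_\Si)=1$. This is automatic: evaluating $c = (\delta b)\, c_{u'}$ at $(1_\Si,1_\Si)$ and using that both $c$ and $c_{u'}$ are normalised (the latter because $u'_{1_\Si}=1$) forces $b(1_\Si)=1$. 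With this normalisation in hand, $a'=b$ meets all requirements and completes the argument.
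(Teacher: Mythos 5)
Your proof is correct and takes essentially the same route as the paper's: both choose a family $(a'_\si)_{\si\in\Si}$ in $\Delta(k)$ realising the coboundary between $c$ and $c_{u'}$, set $u''_\si:=a'_\si u'_\si$, and use centrality of $\Delta(k)$ together with the triviality of the $\Si$-action on it to get (1) for free and $c_{u''}=(\delta a')\,c_{u'}=c$ for (2); your explicit check that $a'_{1_\Si}=1_{\G(k)}$, so that $u''$ satisfies condition (1) of Definition \ref{modifying_family_def}, is a detail the paper leaves implicit. One small caveat: $c$ is not \emph{necessarily} normalised merely because it is cohomologous to the normalised cocycle $c_{u'}$ (multiplying $c_{u'}$ by the coboundary of a cochain $b$ with $b(1_\Si)\neq 1$ gives a cohomologous, non-normalised cocycle); rather, normalisation of $c$ is an implicit hypothesis of the lemma---forced if $c_{u''}=c$ is to be achievable at all, since $c_{u''}$ is always normalised---and under that reading your argument is complete.
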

\begin{proof}
Since $c_{u'}$ and $c$ are cohomologous, there exists a family $(a'_\si)_{\si\in\Si}$ 
of elements of $\Delta(k)$ such that 
\begin{equation}\label{eq giving a}
 c(\si_1,\si_2)c_{u'}(\si_1,\si_2)^{-1} = a'_{\si_1} \si_1(a'_{\si_2}) (a'_{\si_1\si_2})^{-1} 
\quad \text{for } \si \in \Si, i =1,2.
\end{equation}
Let $u''_\si:=a'_\si u'_{\si}$, then $\Phi^{u''}_\si=\Phi^{u'}_\si$ and $\Psi^{u''}_\si= \Psi^{u'}_{\si}$, because $a'_\si\in\Delta(k)$. Since $\Delta$ is central and fixed by the $\Si$-action, it follows from \eqref{eq giving a} that $c_{u''} = c$.
\end{proof}

For a modifying family $u$, let $Z^1_{u}(\Si,\G(k))$ denote the set of $\G(k)$-valued normalised 1-cocycles on $\Si$, calculated with respect to the $\Si$-action given by $\Psi^u$. 
The following lemma describes which modifying families give the same $\Delta(k)$-valued 2-cocycle.

\begin{lemma}\label{lemma mod family 1cocycle}
Let $u = (u_\si)_{\si \in \Si}$ be a modifying family of elements in $\G(k)$ indexed by $\Si$. Then there is a bijection 
\[ \begin{array}{ccc} 
\{\text{modifying families } u'\ |\ c_{u'}=c_u \} & \longleftrightarrow & Z^1_{u}(\Si,\G(k))\\
 u'=(u'_\si)_{\si \in \Si} & \longmapsto & (b_{u'}: \Si \lra \G(k),\:\si  \longmapsto  u'_\si u^{-1}_\si)\\
 u^b= (u^b_\si:= b(\si)u_\si)_{\si \in \Si} & \longleftarrow\!\shortmid & (b: \Si \lra \G(k), \: \si \longmapsto b(\si)).
 \end{array}
\]
\end{lemma}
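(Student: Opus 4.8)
The plan is to verify that the two maps described in the statement are well-defined and mutually inverse, which amounts to unwinding the definition of a modifying family (Definition \ref{modifying_family_def}) and the modified action $\Psi^u$ (Lemma \ref{new_Sigma_action}). First I would check that the forward map is well-defined: given a modifying family $u'$ with $c_{u'} = c_u$, I must show that $b_{u'}(\si) := u'_\si u_\si^{-1}$ is a normalised $1$-cocycle on $\Si$ with respect to the action $\Psi^u$. Normalisation is immediate, since $u'_{1_\Si} = u_{1_\Si} = 1_{\G(k)}$ forces $b_{u'}(1_\Si) = 1_{\G(k)}$. The cocycle condition $b_{u'}(\si_1 \si_2) = b_{u'}(\si_1)\, \Psi^u_{\si_1}(b_{u'}(\si_2))$ is the heart of the matter: expanding both sides using $\Psi^u_{\si_1}(g) = \Ad_{u_{\si_1}} \Psi_{\si_1}(g)$ and substituting the definitions $c_{u'}(\si_1,\si_2) = u'_{\si_1}\Psi_{\si_1}(u'_{\si_2}) (u'_{\si_1\si_2})^{-1}$ and $c_u(\si_1,\si_2) = u_{\si_1}\Psi_{\si_1}(u_{\si_2}) u_{\si_1\si_2}^{-1}$, the hypothesis $c_{u'} = c_u$ is exactly what makes the two sides coincide.

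Next I would check that the backward map is well-defined: given $b \in Z^1_u(\Si,\G(k))$, I must confirm that $u^b := (b(\si) u_\si)_{\si \in \Si}$ is genuinely a modifying family, i.e.\ that $u^b_{1_\Si} = 1_{\G(k)}$ (immediate from normalisation of $b$ and $u_{1_\Si} = 1$) and, more importantly, that $c_{u^b}$ is $\Delta(k)$-valued. In fact I expect a stronger statement to fall out of the computation, namely $c_{u^b} = c_u$: substituting $u^b_\si = b(\si) u_\si$ into the formula for $c_{u^b}$ and using the $\Psi^u$-cocycle identity for $b$ to collapse the $b$-terms, one recovers $c_u(\si_1,\si_2)$ on the nose, which simultaneously shows $c_{u^b}$ is $\Delta(k)$-valued and lands $u^b$ in the correct fibre $\{u' \mid c_{u'} = c_u\}$.

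Finally I would verify the two composites are the identity. Starting from $u'$ with $c_{u'} = c_u$ and passing through $b_{u'}(\si) = u'_\si u_\si^{-1}$, the reconstructed family has $\si$-component $b_{u'}(\si) u_\si = u'_\si u_\si^{-1} u_\si = u'_\si$, recovering $u'$. Conversely, starting from $b$ and forming $u^b_\si = b(\si) u_\si$, the associated $1$-cocycle sends $\si$ to $u^b_\si u_\si^{-1} = b(\si) u_\si u_\si^{-1} = b(\si)$, recovering $b$. Both checks are purely formal cancellations.

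The main obstacle is bookkeeping rather than conceptual: the subtlety is that the cocycle condition defining $Z^1_u(\Si,\G(k))$ is taken with respect to the twisted action $\Psi^u$ (involving the conjugation $\Ad_{u_\si}$), not the original action $\Psi$, so one must be careful to track where each $\Ad_{u_\si}$ enters when matching the $1$-cocycle identity for $b$ against the equality $c_{u'} = c_u$ of $2$-cocycles. I would do this single algebraic identity carefully once, as the well-definedness of the forward map, and then the remaining verifications (well-definedness of the backward map and the two composite computations) follow from the same manipulation run in reverse.
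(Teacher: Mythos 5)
Your proposal is correct and takes essentially the same route as the paper: both verifications reduce to the same two substitution computations (the $\Psi^u$-cocycle identity for $b_{u'}$ from the hypothesis $c_{u'}=c_u$, and the identity $c_{u^b}=c_u$ from the cocycle condition on $b$), with the two composites being the formal cancellations the paper dismisses as clear. One small point to make explicit when you execute the forward computation: after substituting, $c_{u'}(\si_1,\si_2)^{-1}$ and $c_u(\si_1,\si_2)$ sit at opposite ends of the resulting word, so cancelling them uses not only the equality $c_{u'}=c_u$ but also that these 2-cocycles are $\Delta(k)$-valued and hence central in $\G(k)$.
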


\begin{proof}
For a modifying family $u'$ with $c_u = c_{u'}$, we check that $b_{u'}$ is a 1-cocycle:
\begin{align*}
b_{u'}({\si_1\si_2}) & =  u'_{\si_1\si_2} u_{\si_1\si_2}^{-1} = \big(c_{u'}(\si_1,\si_2)^{-1} u'_{\si_1}\si_1(u'_{\si_2})\big) \big(\si_1(u_{\si_2}^{-1}) u_{\si_1}^{-1} c_{u}(\si_1,\si_2)\big)\\
& =  (u'_{\si_1} u_{\si_1}^{-1}) (u_{\si_1}\si_1(u'_{\si_2}u_{\si_2}^{-1})u_{\si_1}^{-1}) =  b_{u'}({\si_1}) \Psi^u_{\si_1}(b_{u'}({\si_2})),
\end{align*}
as $c_u = c_{u'}$ is valued in the central subgroup $\Delta(k) \subset \G(k)$.

For $b \in Z^1_{u}(\Si,\G(k))$, we check that $u^b$ is a modifying family with $c_{u^b} = c_u$:
\begin{eqnarray*}c_{u^b}(\si_1,\si_2) & := & u^b_{\si_1}\si_1(u^b_{\si_2}) (u^b_{\si_1\si_2})^{-1}  =  b({\si_1}) u_{\si_1} \si_1(b({\si_2})u_{\si_2}) (b({\si_1\si_2})u_{\si_1\si_2})^{-1}\\
& \: = & b({\si_1}) (\underbrace{u_{\si_1}\si_1(b({\si_2}))u_{\si_1}^{-1}}_{=\Psi^u_{\si_1}(b({\si_2}))}) \underbrace{u_{\si_1}\si_1(u_{\si_2}) u_{\si_1\si_2}^{-1}}_{=c_u(\si_1,\si_2)\in\Delta} b({\si_1\si_2})^{-1}\\
& \: = & c_u(\si_1,\si_2).
\end{eqnarray*}
This completes the proof, as clearly these two maps are inverse to each other.
\end{proof}

\begin{rmk}
Note that two $\G$-valued $1$-cocycles $b_{u'}$ and $b_{u''}$ are cohomologous (that is, there exists $g \in \G(k)$ such that, $b_{u''}(\si)=gb_{u'}(\si)\Psi^u_\si(g^{-1})$ for all $\si\in \Si$) if and only if  there exists $g \in \G(k)$ such that $u''_\si = g u'_\si \si(g^{-1})$ for all $\si\in \Si$. 
\end{rmk}

For a modifying family $u$, we can now give a decomposition of the fibre of the type map $\cT : (\Mods)^{\Si}(k) \lra H^2(\Si,\Delta(k))$ over $[c_u]$, where the indexing set is the orbit space $H^1_{u}(\Si,\G(k)) / H^1(\Si,\Delta(k))$: as $\Delta(k)$ is central and $\Si$-invariant in $\G(k)$, the map $(\beta,b)\lmt (\beta b)_\si :=(\beta_\si b_\si)_{\si\in\Si}$ indeed induces an action of the group $H^1_{u}(\Si,\Delta(k))$ on the set $H^1_{u}(\Si,\G(k))$. Moreover, $H^1_{u}(\Si,\Delta(k))$ is independent of the modifying family $u$, as the $\Si$-action on $\Delta$ induced by $\Psi^u$ is trivial. For $[b]\in H^1_{u}(\Si,\G(k))$, we shall denote its $H^1(\Si,\Delta(k))$-orbit by $\ov{[b]}$. 

Let us first show that the image of the morphism ${_{u^{b}}}f_{\Si}^{rs}$ (\textit{cf.}\ Theorem \ref{new_fSigma}) is independent of the choice of 1-cocycle $b \in \ov{[b]}$.

\begin{lemma}\label{lemma cohomologous implies same images}
For $b_1,b_2 \in Z^1_{u}(\Si,\G(k))$, let $u^{b_1}, u^{b_2}$ be the associated modifying families given by Lemma \ref{lemma mod family 1cocycle}. If $\ov{[b_1]} = \ov{[b_2]}$, then $ \im {_{u^{b_1}}}f_{\Si}^{rs} = \im {_{u^{b_2}}}f_{\Si}^{rs}$.
\end{lemma}
\begin{proof}
Let us first show this statement when $[b_1] = [b_2]$; that is $b_1$ and $b_2$ are cohomologous, so there exists $g \in \G(k)$ such that $b_2(\si)=gb_{1}(\si)\Psi^u_\si(g^{-1})$ for all $\si\in \Si$. Then the action of $g$ gives an isomorphism ${_{u^{b_1}}}\Rep^{\Si} \overset{\simeq}{\lra} {_{u^{b_2}}}\Rep^{\Si}$ and the claim follows, as $\pi$ is $\G$-invariant. 

If $b_1 = \beta \cdot b_2$ for a $\Delta(k)$-valued 1-cocycle $\beta$, then the associated modifying families $u^{b_1}, u^{b_2}$ define the same actions on $\Rep$ and $\G$, as $\beta_{\sigma} \in \Delta$ and this group acts trivially on $\Rep$ and is central in $\G$. This completes the proof.
\end{proof}

We thus let $ \im {_{u^{\ov{[b]}}}}f_{\Si}^{rs}:=  \im {_{u^{b}}}f_{\Si}^{rs}$ for any representative $b$ of $\ov{[b]}$.

\begin{thm}\label{components_of_fibres_of_the_type_map}
Let $u$ be a modifying family of elements in $\G(k)$ indexed by $\Si$; then there is a decomposition of $\cT^{-1}([c_u])$ into a disjoint union of closed subvarieties
\[ \cT^{-1}([c_u]) = \bigsqcup_{\ov{[b]} \in H^1_{u}(\Si,\G(k)) / H^1(\Si,\Delta(k))} (\im {_{u^{\ov{[b]}}}}f^{rs}_{\Si})(k).\] 
Furthermore, the non-empty fibres of ${_{u^b}}f_{\Si}^{rs}$ are described by Theorem \ref{new_fSigma}.
\end{thm}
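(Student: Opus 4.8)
The plan is to prove the decomposition of the fibre $\cT^{-1}([c_u])$ in two parts: first showing that the fibre is the union of the images $(\im {_{u^b}}f^{rs}_{\Si})(k)$ as $b$ ranges over representatives of $H^1_u(\Si,\G(k))$, and then showing that these images are disjoint (or coincide) precisely according to the $H^1(\Si,\Delta(k))$-orbit of $[b]$. For the covering statement, let $m = \G \cdot M \in \cT^{-1}([c_u])(k)$. By definition of the type map (Proposition \ref{type_map_def_qaut}), there exist elements $u'_\si \in \G(k)$ with $u'_\si \cdot \si(M) = M$ whose associated $2$-cocycle $c_{u'}$ is cohomologous to $c_u$. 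By Lemma \ref{equal cocycles}, after multiplying by a family of elements of $\Delta(k)$ (which does not change the modified actions $\Phi^{u'},\Psi^{u'}$), I may assume $c_{u'} = c_u$ exactly. Then by Lemma \ref{lemma mod family 1cocycle}, $u'$ equals $u^b$ for a unique $b = b_{u'} \in Z^1_u(\Si,\G(k))$, and the relation $u'_\si \cdot \si(M) = M$ says precisely that $M$ is $(\Si, u')$-fixed, i.e.\ $M \in {_{u^b}}\Rep^{\Si}(k)$; since $M$ is $\theta$-stable, it is regularly stable, so $m \in (\im {_{u^b}}f^{rs}_{\Si})(k)$.

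The reverse inclusion is the final clause of Theorem \ref{new_fSigma}: each $\im {_{u^b}}f^{rs}_{\Si}(k)$ lies in $\cT^{-1}([c_{u^b}])$, and by the computation in Lemma \ref{lemma mod family 1cocycle} we have $c_{u^b} = c_u$, so every such image sits inside $\cT^{-1}([c_u])$. This establishes that the fibre is covered by the stated images.

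For the disjointness (and well-definedness of the indexing), I would argue that $\im {_{u^{b_1}}}f^{rs}_{\Si}(k) = \im {_{u^{b_2}}}f^{rs}_{\Si}(k)$ if and only if $\ov{[b_1]} = \ov{[b_2]}$ in the orbit space. One direction is Remark \ref{rmk cohomologous implies same images}: if $b_1, b_2$ are cohomologous in $Z^1_u(\Si,\G(k))$ then the conjugating element $g$ induces an isomorphism ${_{u^{b_1}}}\Rep^{\Si} \simeq {_{u^{b_2}}}\Rep^{\Si}$ carrying $\theta$-stable points to $\theta$-stable points, so the images agree; more generally, if $[b_1]$ and $[b_2]$ differ by an element of $H^1(\Si,\Delta(k))$, then the corresponding modifying families differ by a $\Delta(k)$-valued family $\delta$, and since $\Delta(k)$ fixes every point of $\Rep$ and acts trivially after passing to the quotient, the fixed loci ${_{u^{b_1}}}\Rep^{\Si}$ and ${_{u^{b_2}}}\Rep^{\Si}$ have the same image in $\Mods$. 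Conversely, suppose $m \in \im {_{u^{b_1}}}f^{rs}_{\Si}(k) \cap \im {_{u^{b_2}}}f^{rs}_{\Si}(k)$; then $M$ admits two modifying families $u^{b_1}, u^{b_2}$ with $u^{b_i}_\si \cdot \si(M) = M$. Comparing them, the family $\si \mapsto (u^{b_1}_\si)(u^{b_2}_\si)^{-1}$ stabilises $M$ and hence is $\Delta(k)$-valued, which translates into the statement that $[b_1]$ and $[b_2]$ lie in the same $H^1(\Si,\Delta(k))$-orbit.

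I expect the main obstacle to be the careful bookkeeping in the converse direction of disjointness: tracking how the choice of representative $M$ of the orbit $m$, and the choice of stabilising elements $u_\si$, interact under the two modifying families to extract a well-defined $\Delta(k)$-valued $1$-cochain witnessing the orbit relation. This requires checking that the cocycle condition is preserved and that the resulting class in $H^1(\Si,\Delta(k))$ is independent of all choices, mirroring the well-definedness verifications already carried out for the maps $\beta$ and $\cT$ in Propositions \ref{non_empty_fibres} and \ref{type_map_def_qaut}. The final sentence about fibres is immediate from Theorem \ref{new_fSigma} applied to each $u^b$.
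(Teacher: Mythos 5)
Your proposal is correct and follows essentially the same route as the paper's proof: covering via Lemma \ref{equal cocycles} and Lemma \ref{lemma mod family 1cocycle}, disjointness by comparing two modifying families stabilising a representative of the common orbit and noting their ratio lies in $\Stab_{\G}(M) = \Delta(k)$, and the (easy, but necessary) observation that images depend only on the $H^1(\Si,\Delta(k))$-orbit because twisting $u^b$ by a $\Delta(k)$-valued cocycle changes neither ${_{u^b}}\Rep^{\Si}$ nor ${_{u^b}}\G^{\Si}$. The one point to make fully precise is exactly the bookkeeping you flag: in the converse of disjointness the two images a priori give families fixing \emph{different} representatives of the orbit $m$, so one first translates by a suitable $g \in \G(k)$ (which, by Remark \ref{rmk cohomologous implies same images}, alters each $b_i$ only within its cohomology class and preserves the 2-cocycle $c_u$) before forming the ratio $a_\si := u_{2,\si}u_{1,\si}^{-1} \in \Delta(k)$, as is done in the paper.
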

\begin{proof} 
To show that these images cover $\cT^{-1}([c_u])$, let $\G\cdot M\in \cT^{-1}([c_u])$; then by definition of the type map (\textit{cf.}\ Construction \ref{constr_type_map_def}), there exists a modifying family $u'=(u'_\si)_{\si\in\Si}$ of elements in $\G(k)$ such that $M \in {_{u'}}\Rep^{\Si}$ and $[c_{u'}]=[c_u]$. 
By Lemma \ref{equal cocycles}, we can assume that $c_{u'}=c_u$ and so, by Lemma \ref{lemma mod family 1cocycle}, there exists $b \in Z^1_{u}(\Si,\G(k))$ such that $u' = u^b$. Hence, $\G \cdot M \in \im {_{u^b}}f_{\Si}^{rs}$.

To prove that this union is disjoint, suppose that $\G \cdot M \in \im {_{u^{b_i}}}f^{rs}_{\Si}$ for $ i =1,2$. Then there exist modifying families $u_i$, for $i=1,2$, such that 
\begin{enumerate}
\item $[b_{u_i}]=[b_i] \in H^1_{u}(\Si,\G(k))$,
\item $M \in {_{u_i}}\Rep^{\Si}$ for $i=1,2$,
\item $c_{u_1} = c_{u_2} =c_u$.
\end{enumerate}
The only one of these assertions which is not clear is the final one, which follows from the fact that $c_{u^{b_i}}=c_u$, for $i=1,2$, and the observation that if $[b] = [b']$, then $c_{u^b} =c_{u^{b'}}$. From (2), we deduce that $a_{\si}:=u_{2,\si} u_{1,\si}^{-1} \in \Stab_{\G}(M) = \Delta(k)$, from which we conclude that $b_{u_2,\si} = a_{\si} b_{u_1,\si}$ for all $\si$, therefore that $[b_{u_1}]$ and $[b_{u_2}]$ lie in the same $H^1(\Si,\Delta(k))$-orbit in $H^1_{u}(\Si,\G(k))$. This completes the proof.
\end{proof}

By definition of the type map $ \cT : (\Mods)^{\Si}(k) \lra  H^2(\Si,\Delta(k))$, if $[c] \in \im \cT$, there exists a modifying family $u$ with $[c] = [c_u]$. We can now prove Theorem \ref{decomp_thm_qaut_intro}.

\begin{proof}[Proof of Theorem \ref{decomp_thm_qaut_intro}]\label{proof_main_result}
We have a set-theoretic decomposition of the closed points 
\[ (\Mods)^{\Si}(k) = \bigsqcup_{[c] \in \im \cT} \cT^{-1}([c])\]
and, for each $[c] \in \im \cT$, there exists a modifying family $u$ such that $[c] = [c_u]$. Hence, by Theorem \ref{components_of_fibres_of_the_type_map} we obtain the claimed decomposition on the level of $k$-points. The fibres of the morphisms ${_{u^b}}f^{rs}_{\Si}$ are described as in Theorem \ref{new_fSigma}. 

By Proposition \ref{prop image frs closed}, $\im {_{u^b}}f^{rs}_{\Si}$ is a closed subvariety of $(\Mods)^{\Si}$. Thus, as we are working with varieties over an algebraically closed field $k$, this set-theoretic decomposition on the level of closed points arises from a decomposition of varieties if the index set of the decomposition is finite. Finally, the finiteness of the index set in the two cases stated in this theorem follows from Corollary \ref{cor index finite}.
\end{proof}

\begin{rmk}\label{rmks on decomp thm}We make the following observations on Theorem \ref{decomp_thm_qaut_intro}.
\begin{enumerate}
\item $\Mods$ is smooth by Luna's \'{e}tale slice Theorem, as it is a geometric quotient of the smooth $k$-variety $\Rep^{\chi_\theta-s}$ by the free action of the reductive group $\Gbar:=\G/\Delta$. Hence, $(\Mods)^{\Si}$ is smooth by  \cite[Proposition A.8.11]{CGP}.
\item Over the complex numbers $\Mods$ is a smooth K\"{a}hler manifold, as it is homeomorphic, by the Kempf-Ness Theorem \cite{kempf_ness}, to the K\"{a}hler reduction of the complex vector space $\Rep$ (by the action of the maximal compact subgroup of $\G$). The group $\Si$ acts algebraically on $\Mods$, therefore preserves the complex structure. In particular, $(\Mods)^{\Sigma}$ is a holomorphic, and thus K\"ahler, submanifold of $\Mods$. 
\item\label{non-closed field} If $k$ is not algebraically closed, then one has to replace $\theta$-stability with a stronger notion of $\theta$-geometric stability, which is stable under base change and we instead consider the moduli space $\Modgs$ of $\theta$-geometrically stable representations (\textit{cf.}\ \cite[$\S$2]{HS_galois}). In this case, we obtain a set-theoretic decomposition of $(\Modgs)^{\Si}(\Omega)$ for any algebraically closed field $\Omega/k$ via the same techniques. More precisely, one can construct the morphism $f_{\Sigma}$ using the universal property of the GIT quotient and noting that it suffices to check the first equality in Proposition \ref{prop constr f cov} on $\kb$-points, which we have already done. Then one can prove an analogous description to Proposition \ref{non_empty_fibres} for all geometric fibres of $f^{rs}_\Sigma$ and, for any algebraically closed field $\Omega/k$, one can construct a type map 
\[ \cT_{\Omega} : (\Modgs)^{\Si}(\Omega) \lra H^2(\Si, \Delta(\Omega)) \]
as in Construction \ref{constr_type_map_def}, by using Remark \ref{stabiliser_of_stable} for $\Omega$. 
\end{enumerate}
\end{rmk}

We end this section with two examples that illustrate the decomposition theorem.

\begin{ex}\label{ex f not surj cov}
Let $Q$ be the quiver
$ \xymatrix@1{  1 \bullet \ar@/^/[r]^{a} \ar@/_/[r]_{b}  & \bullet 2  } $
with covariant involution $\sigma$ which fixes the vertex set $V = \{ 1,2\}$ 
and sends arrow $a$ to $b$. As all dimension vectors $d$ and stability parameters 
$\theta$ are $\sigma$-compatible, we choose $d = (1,1)$ and $\theta = (1,-1)$. 
Then $(s_1,s_2) \in \G = \GG_m^2$ acts on $M= (M_a,M_b) \in \Rep = \AA^2$ by
\[ (s_1,s_2) \cdot (M_a,M_b) = (s_2 M_a s_1^{-1},s_2 M_b s_1^{-1}).\] Let us write $\AA^2 = \spec k[X_a,X_b]$; then both $X_a$ and $X_b$ are semi-invariant functions 
for the character $\chi_\theta$ defined at \eqref{the_character}. Hence, we have the GIT quotient
\[ \pi: \Rep^{\theta-ss} = \AA^2 - \{ 0 \} \lra 
\Mod :=\Rep/\!/_{\chi_\theta} \G = \proj k[X_a,X_b] = \PP^1.\]
In fact, the $\theta$-semistable locus and $\theta$-stable locus agree, 
so that this is a geometric quotient, and, moreover, every GIT semistable orbit has stabiliser group $\Delta(k)$.  

\noindent The involution $\sigma$ on $Q$ induces an involution $\Phi_\sigma$ on $\Rep$ given by
\[ \Phi_{\sigma}((M_a,M_b)) = (M_b,M_a)\]
and induces a trivial action on $\G$, as $\sigma$ fixes the vertex set. 
The induced involution $\Phi'_\sigma$ on $\Mod=\PP^1$ is given by
$ \Phi'_\sigma([M_a:M_b]) = [M_b:M_a]$
and the fixed locus is $(\PP^1)^{\sigma}=\{[1:1],[1:-1]\}$. The GIT quotient of $\G^\sigma = \G$ acting on $\Rep^\sigma = \{(M_a,M_b): M_a = M_b\} \cong \AA^1$ with respect to $\chi_\theta$ is
\[ (\Rep^{\sigma})^{\theta-ss} \cong \AA^1 - \{ 0 \} \lra 
\Rep^\sigma/\!/_{\chi_\theta} \G = \proj k[X_a] = \spec k, \]
which we can interpret this quotient as a moduli space for $\theta$-semistable 
representations of dimension $\tilde{d}=(1,1)$ of the quotient quiver $Q/\sigma = \bullet \lra \bullet $ 
by Corollary \ref{modular_interp_of_Si_fixed_reps}. The morphism 
\[f_\sigma : \Rep^\sigma/\!/_{\chi_\theta} \G \cong \spec k \lra 
(\Mod)^{\sigma}=\{[1:1],[1:-1]\}\]
is injective but, when $\mathrm{char}(k)\neq 2$, it is not surjective, as its image equal to the point $[1:1]$. Let $u$ be the modifying family given by $u_\si = (1,-1) \in \G(k)$; then we can modify the $\ZZ/2\ZZ$-action on $\Rep$ by 
\[ \Phi^u_\sigma((M_a,M_b)) := u_\si \cdot \Phi_\sigma((M_a,M_b)) = (-M_b,-M_a) \]
and the modified action on $\G$ remains trivial. Then the GIT quotient of ${_u}\G^{\si} = \G$ acting on ${_u}\Rep^{\si} = \{(M_a,M_b): M_a = - M_b\} \cong \AA^1$ with respect to $\chi_\theta$ is a point and the image of 
${_u}f_{\si}: {_u}\Rep^{\si}/\!/_{\chi_\theta}\ {_u}\G^{\si} \lra (\Mod)^{\sigma}$
is the point $[1:-1] \in (\PP^1)^\sigma$. Hence, the fixed locus decomposes into two pieces 
\begin{equation}\label{example_of_fibre_of_the_type_map}
(\Mod)^\sigma =  \{[1:1],[1:-1]\} \cong \Rep^\sigma/\!/_{\chi_\theta} \G^\sigma \bigsqcup {_u}\Rep^{\si}/\!/_{\chi_\theta}\ {_u}\G^{\si}.
\end{equation}

Let us now explain this decomposition in terms of the group cohomology of $\Si$. First, the injectivity of $f_\si$ follows from Proposition \ref{non_empty_fibres} and the fact that $\ZZ/2\ZZ$ acts trivially on $\G$, so that $H^1(\Si,\Delta(k)) \simeq \{a\in k\ |\ a^2=1_k\} = \{\pm 1_k\}$ and 
$$H^1(\Si,\G(k)) \simeq \{(s_1,s_2)\in\GG_m(k)\times\GG_m(k)\ |\ s_1^2=s_2^2=1_k\} \simeq \{\pm 1_k\}\times\{\pm 1_k\}$$ and the map $H^1(\Si,\Delta(k))\lra H^1(\Si,\G(k))$ is conjugate to the group homomorphism $a\lmt (a,a)$, which is injective. There is only one fibre for the type map, because $H^2(\Si,\Delta(k))=1$ by Example \ref{ex H2 cyclic gp}. This fibre has two components, as 
\begin{align*} H^1(\Si,\G(k)) / H^1(\Si,\Delta(k)) & =   \big(\{\pm 1_k\}\times\{\pm 1_k\}\big) / \{\pm 1\}\\ & =  \{(1,1), (1,-1)\},
\end{align*} 
which has two distinct elements if $\mathrm{char}(k)\neq 2$; this gives the decomposition \eqref{example_of_fibre_of_the_type_map}. 
\end{ex}

\begin{ex}\label{ex f not inj cov}
Let $Q$ be the quiver 
$ \xymatrix@1{  1 \bullet \ar@/^/[r]^{a} & \bullet 2 \ar@/^/[l]^{b}   } $
with covariant involution $\sigma$ which sends vertex $1$ to $2$, and sends arrow $a$ to $b$. 
Then a dimension vector $d=(d_1,d_2)$ is $\sigma$-compatible if and only if $d_1 = d_2$ and 
similarly for $\theta$. For $\sum_i \theta_i d_i = 0$, we need $\theta = (0,0)$, if $\theta$ and $d$ are both $\sigma$-compatible. 
Let  $d = (1,1)$ and $\theta = (0,0)$; 
then $(s_1,s_2) \in \G = \GG_m^2$ acts on $(M_a,M_b) \in \Rep = \AA^2$ by
\[(s_1,s_2) \cdot (M_a,M_b) = (s_2 M_a s_1^{-1},s_1 M_b s_2^{-1}) \]
and the affine GIT quotient of this action is
\[ \pi : \Rep \lra \Mod:=\Rep/\!/\G = \spec k[X_aX_b] \cong \AA^1;\]
this restricts to a geometric quotient on the stable locus, which is the complement to 
the union of the coordinate axes. The action of $\sigma$ on $\G$ is given by 
$(s_1,s_2) \longmapsto (s_2,s_1)$ and so $\G^\sigma = \Delta $, and the $\si$-action on 
$\Rep$ is given by $(M_a,M_b) \longmapsto (M_b,M_a)$. Hence, the induced 
action of $\sigma$ on $\Mod \cong \AA^1$ is trivial. 

The action of $\G^\sigma = \Delta$ on $\Rep^{\sigma}\cong\AA^1$ is 
also trivial and so the affine GIT quotient of this action is the identity map on $\AA^1$. 
We can interpret this as a moduli space for the quotient quiver $Q/\si$, which is the Jordan quiver with one vertex and a single loop.

Therefore, the morphism
\[ f_\sigma: \Rep^{\sigma}/\!/\G^\sigma \cong \AA^1/\!/\GG_m = \AA^1 
\lra (\Mod)^\sigma = (\AA^1)^\sigma = \AA^1\]
is given by $z \longmapsto z^2$, which is not injective when $\mathrm{char}(k)\neq 2$. 
More concretely, we can see this on the level of representations. 
The two representations $M^{\pm} = (\pm1,\pm1) \in \Rep^\sigma$ correspond 
to the same stable $\G$-orbit (for example, if $g = (-1,1) \in \G$, then 
$g \cdot M^{+} = M^{-}$). However, the $\G^\sigma$-orbits of these points 
are distinct, as the $\G^\sigma$-action is trivial.

To see the failure of injectivity on the level of group cohomology, we use Proposition \ref{non_empty_fibres} and the fact that $\ZZ/2\ZZ$ acts on $\G = \GG_m^2$ by swapping the two factors, so that $H^1(\Si,\Delta(k))\simeq\{\pm 1_k\}$ and $$H^1(\Si,\G(k)) \simeq \frac{ \{(s_1,s_2)\in \GG_m(k)\times \GG_m(k)\ |\ (s_1 s_2,s_2s_1)=(1,1)\} }{ \{(\frac{s_1}{s_2},\frac{s_2}{s_1}) : (s_1,s_2)\in \GG_m(k) \times \GG_m(k)\}} \simeq \{(1,1)\},$$ which shows that $f_\Si^{rs}$ is $2:1$ when $\mathrm{char}(k)\neq 2$.

Finally $H^2(\Si,\Delta(k) ) = {1}$ by Example \ref{ex H2 cyclic gp}, and $H^1(\Si,\G(k)) = \{(1,1)\}$ so, by Theorem \ref{components_of_fibres_of_the_type_map}, one has $(\Mods)^{\si}=\cT^{-1}([1]) = \mathrm{Im}\, f_{\Si}^{rs}$, as we saw above.
\end{ex}

\subsection{Representation-theoretic interpretration}\label{representation_theoretic_interp}

Let $\crep_k(Q)$ denote the category of $k$-representations of $Q$.

\begin{defn}
A covariant (resp.\ contravariant) automorphism $\sigma$ of $Q$ determines a covariant (resp.\ contravariant) functor $\sigma: \crep_k(Q) \lra \crep_k(Q)$, which on a $k$-representation $W=((W_v)_{v \in V}, (\varphi_a)_{a \in A})$ is given by
\[ \sigma(W) = \left\{ \begin{array}{ll} ((W_{\sigma(v)})_{v \in V}, (\varphi_{\sigma(a)})_{a \in A})& \text{if } \sigma \text{ is covariant,} \\ ((W_{\sigma(v)}^*)_{v \in V}, (\varphi_{\sigma(a)}^*)_{a \in A}) & \text{if } \sigma \text{ is contravariant.} \end{array} \right. \]
\end{defn}

\noindent Note that if $W$ has dimension $d=(d_v)_{v\in V}$ then $\si(W)$ has dimension $\si(d)=(d_{\si(v)})_{v\in V}$. In order for subrepresentations of $\si(W)$ to correspond canonically and bijectively to subrepresentations of $W$, we restrict ourselves to the covariant case $\Si \subset \Aut^+(Q)$ in this subsection.

\begin{lemma}\label{theta_semistability_preserved}
Let $\theta=(\theta_v)_{v\in V}$ be a $\Si$-compatible stability parameter. Then, for all $\si\in\Si$, a $k$-representation $W$ of $Q$ is $\theta$-(semi)stable if and only if $\sigma(W)$ is $\theta$-(semi)stable. 
\end{lemma}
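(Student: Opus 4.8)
The plan is to show that the covariant functor $\sigma \colon \crep_k(Q) \to \crep_k(Q)$ is an order-preserving equivalence which moreover preserves the $\theta$-slope, so that the defining inequalities of (semi)stability for $W$ and for $\sigma(W)$ are in exact correspondence.

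First I would establish that $\sigma$ carries subrepresentations to subrepresentations bijectively. Given a subrepresentation $W' \subseteq W$, that is, subspaces $W'_v \subseteq W_v$ with $\varphi_a(W'_{t(a)}) \subseteq W'_{h(a)}$ for every arrow $a$, I would check that $\sigma(W') := ((W'_{\sigma(v)})_{v\in V}, (\varphi_{\sigma(a)}|)_{a\in A})$ is a subrepresentation of $\sigma(W)$. The only thing to verify is that $\varphi_{\sigma(a)}(W'_{\sigma(t(a))}) \subseteq W'_{\sigma(h(a))}$; since $\sigma$ is covariant, $t(\sigma(a)) = \sigma(t(a))$ and $h(\sigma(a)) = \sigma(h(a))$ by \eqref{compatibility of h and t for cov autos}, so this is precisely the subrepresentation condition for $W'$ applied to the arrow $\sigma(a)$. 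Because $\sigma_V$ and $\sigma_A$ are bijections, $\sigma$ is an isomorphism of categories with inverse $\sigma^{-1}$; consequently $W' \mapsto \sigma(W')$ is a bijection from the set of subrepresentations of $W$ onto the set of subrepresentations of $\sigma(W)$, and it preserves and reflects the properties of being nonzero and proper.

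Next I would compute that the slope is unchanged. For any subrepresentation $W'$, reindexing the sums via the bijection $w = \sigma(v)$ of $V$ and using the $\Si$-compatibility $\theta_{\sigma(v)} = \theta_v$ (equivalently $\theta_{\sigma^{-1}(w)} = \theta_w$) gives
\[ \mu_\theta(\sigma(W')) = \frac{\sum_{v\in V} \theta_v \dim_k W'_{\sigma(v)}}{\sum_{v\in V} \dim_k W'_{\sigma(v)}} = \frac{\sum_{w\in V} \theta_{\sigma^{-1}(w)} \dim_k W'_w}{\sum_{w\in V} \dim_k W'_w} = \mu_\theta(W'), \]
and in particular $\mu_\theta(\sigma(W)) = \mu_\theta(W)$.

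Finally I would assemble the two steps. By definition $W$ is $\theta$-semistable if and only if $\mu_\theta(W') \leq \mu_\theta(W)$ for all nonzero $W' \subseteq W$; running this inequality through the slope-preserving bijection above shows it is equivalent to $\mu_\theta(W'') \leq \mu_\theta(\sigma(W))$ for all nonzero $W'' \subseteq \sigma(W)$, that is, to $\theta$-semistability of $\sigma(W)$. The same argument with strict inequalities on proper nonzero subrepresentations handles $\theta$-stability, and applying the result to $\sigma^{-1}$ (also a covariant automorphism for which $\theta$ is compatible) yields the reverse implication. I do not expect a genuine obstacle here: the only point requiring care is that the verification that $\sigma$ preserves subrepresentations uses exactly the covariance relations \eqref{compatibility of h and t for cov autos}, while the slope invariance uses exactly the $\Si$-compatibility of $\theta$, so both hypotheses are used precisely and nothing further is needed.
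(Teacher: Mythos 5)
Your proof is correct and follows essentially the same route as the paper's: the paper likewise combines the slope computation $\mu_\theta(\sigma(W)) = \mu_\theta(W)$ (via reindexing and $\Si$-compatibility of $\theta$) with the observation that $W' \subset W$ is a subrepresentation if and only if $\sigma(W') \subset \sigma(W)$ is. Your version merely spells out the verification of the subrepresentation correspondence via the covariance relations, which the paper leaves implicit.
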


\begin{proof}
We note that for all $k$-representations $W$, we have, by $\Sigma$-compatibility of $\theta$,
\[ \mu_{\theta}(\sigma(W))= \frac{\sum_{v \in V} \theta_v \dim W_{\sigma(v)}}{\sum_{v \in V} \dim W_{\sigma(v)}} = \frac{\sum_{v \in V} \theta_{\sigma^{-1}(v)} \dim W_v}{\sum_{ v\in V} \dim W_v} = \mu_{\theta}(W).\] As $W' \subset W$ is a subrepresentation if and only if $\sigma(W') \subset \sigma(W)$ is a subrepresentation, this equality proves the statement.
\end{proof}

\begin{defn}[Equivariant representations]
For $\Si \subset \Aut^+(Q)$, let $\crep_k(Q,\Si)$ denote the category whose objects are pairs $(W,\ga)$ consisting of an object $W$ of $\crep_k(Q)$ and a family of isomorphisms $(\ga_\si:\si(W) \overset{\simeq}{\lra} W)_{\si\in \Si}$ such that, for all $(\si_1,\si_2)\in\Si\times\Si$, we have $\ga_{\si_1\si_2} = \ga_{\si_1}\si_1(\ga_{\si_2})$, and whose morphisms are morphisms of representations that commute to the $(\ga_\si)_{\si\in\Si}$.
\end{defn}

\noindent There is a faithful forgetful functor $\crep_k(Q,\Si) \lra \crep_k(Q)$, and a representation of $Q$ can only lie in the essential image of this functor, if its dimension vector is $\Si$-compatible. More generally, given a $2$-cocycle $c\in Z^2(\Si,k^\times)$, we define $(\Si,c)$-equivariant representations of $Q$ to be pairs $(W,\ga)$ as above, except that we now ask for  $\ga_{\si_1}\si_1(\ga_{\si_2}) = c(\si_1,\si_2) \ga_{\si_1\si_2}$. This defines a category $\crep_k(Q,\Si,c)$ analogously to the case of $c \equiv 1$. The following result is then easily checked.

\begin{lemma}
If $u=(u_\si)_{\si\in\Si}$ is a modifying family in the sense of Definition \ref{modifying_family_def} and $c_u$ is the associated $2$-cocycle, there is a bijection between isomorphism classes of $d$-dimensional objects of $\crep_k(Q,\Si,c_u)$ and the set ${_u}\Rep^{\Si}(k) /\ {_u}\G^{\Si}(k)$.
\end{lemma}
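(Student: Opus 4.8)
The plan is to set up an explicit dictionary between the group-theoretic data carried by the modifying family $u$ and the categorical data of an equivariant structure, and then to match isomorphisms on the categorical side with the ${_u}\G^{\Si}(k)$-action on the algebraic side. First I would fix once and for all the standard vector spaces $W_v=k^{d_v}$, so that a $d$-dimensional $k$-representation $W$ is literally a closed point $M=(M_a)_{a\in A}\in\Rep(k)$ and, using the $\Si$-compatibility of $d$, so that the twisted representation $\si(W)$ is exactly the point $\Phi_\si(M)=(M_{\si(a)})_{a\in A}$. Under this identification a family of linear isomorphisms $\ga_\si:\si(W)\to W$ becomes a tuple $g_\si=((\ga_\si)_v)_{v\in V}\in\G(k)$. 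I would then record three elementary translations: (i) $\ga_\si$ is a morphism of representations $\si(W)\to W$ if and only if $g_\si\cdot\Phi_\si(M)=M$; (ii) applying the functor $\si_1$ to $\ga_{\si_2}$ corresponds to $\Psi_{\si_1}(g_{\si_2})$, since $(\si_1\ga_{\si_2})_v=(\ga_{\si_2})_{\si_1(v)}$; and (iii) composition of morphisms corresponds to the product in $\G(k)$. Combining (ii) and (iii) turns the equivariance axiom $\ga_{\si_1}\si_1(\ga_{\si_2})=c_u(\si_1,\si_2)\ga_{\si_1\si_2}$ into the group identity $g_{\si_1}\Psi_{\si_1}(g_{\si_2})=c_u(\si_1,\si_2)\,g_{\si_1\si_2}$; these are routine checks.

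With the dictionary in hand, the map I would construct sends $M\in{_u}\Rep^{\Si}(k)$ to the pair $(W(M),(u_\si)_{\si\in\Si})$. This is well-defined: the defining condition $\Phi^u_\si(M)=u_\si\cdot\Phi_\si(M)=M$ of ${_u}\Rep^{\Si}$ is exactly translation (i) for $g_\si=u_\si$, so each $u_\si$ is a genuine isomorphism $\si(W(M))\to W(M)$; and the required cocycle identity holds by the very definition $c_u(\si_1,\si_2)=u_{\si_1}\Psi_{\si_1}(u_{\si_2})u_{\si_1\si_2}^{-1}$ of the $2$-cocycle attached to a modifying family (Definition \ref{modifying_family_def}). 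To pass to isomorphism classes I would unwind what an isomorphism $(W(M),(u_\si))\to(W(M'),(u_\si))$ is: an element $h\in\G(k)$ with $h\cdot M=M'$ intertwining the structures, i.e.\ $h\,u_\si=u_\si\,\Psi_\si(h)$ for all $\si$. The latter rearranges to $\Psi_\si(h)=u_\si^{-1}h\,u_\si$, that is $\Psi^u_\si(h)=\Ad_{u_\si}\Psi_\si(h)=h$, which is precisely the condition $h\in{_u}\G^{\Si}(k)$. Hence two points of ${_u}\Rep^{\Si}(k)$ yield isomorphic equivariant representations exactly when they share a ${_u}\G^{\Si}(k)$-orbit, giving both well-definedness on orbits and injectivity of the induced map.

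The step I expect to be the main obstacle is surjectivity, namely that every $d$-dimensional object of $\crep_k(Q,\Si,c_u)$ is isomorphic to one of the form $(W(M),(u_\si))$. Choosing a basis presents an arbitrary object as a pair $(M_0,(g_\si))$ satisfying the two identities above, and the cocycle identity shows that $(g_\si)$ is itself a modifying family with $c_{(g_\si)}=c_u$; by Lemma \ref{lemma mod family 1cocycle} it is of the form $g_\si=b(\si)u_\si$ for a unique $b\in Z^1_u(\Si,\G(k))$. Reducing to standard form then amounts to producing $h\in\G(k)$ with $g_\si=h^{-1}u_\si\Psi_\si(h)$, equivalently to trivialising the class $[b]\in H^1_u(\Si,\G(k))$. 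This is exactly the point at which the Galois argument of \cite{HS_galois} invokes Hilbert's 90th Theorem, so the hard part will be that here $H^1_u(\Si,\G(k))$ need not vanish; controlling it is genuinely the heart of the matter, and it is consistent with the fact that in Theorem \ref{decomp_thm_qaut_intro} the classes $\ov{[b]}$ are precisely what index the distinct modifying families needed to exhaust the fixed locus.
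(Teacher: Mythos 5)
Your dictionary, the construction of the map $M \longmapsto (W(M),(u_\si))$, and the identification of isomorphism of equivariant objects with the ${_u}\G^{\Si}(k)$-action are all correct; since the paper offers no argument at all (the result is declared ``easily checked''), your write-up of translations (i)--(iii), well-definedness on orbits, and injectivity is strictly more careful than the paper itself on these points.

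However, the step you isolate as the main obstacle, surjectivity, is not merely difficult: it is false, and your own reduction essentially proves this. As you show, after a choice of basis an arbitrary $d$-dimensional object is a pair $(M_0,(g_\si))$ in which $(g_\si)$ is a modifying family with $c_{(g_\si)}=c_u$, hence $g_\si=b(\si)u_\si$ for a unique $b\in Z^1_u(\Si,\G(k))$, and $(M_0,(g_\si))$ is isomorphic to an object of the form $(M,(u_\si))$ if and only if $b$ is a coboundary. Moreover $[b]\in H^1_u(\Si,\G(k))$ is an isomorphism invariant, since an isomorphism $h$ transforms $b(\si)$ into $h\,b(\si)\,\Psi^u_\si(h)^{-1}$. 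Consequently every object with nontrivial invariant lies outside the image of your (injective) map, and such objects exist whenever $H^1_u(\Si,\G(k))\neq 1$ and the relevant fixed locus is non-empty. The paper's own Example \ref{ex f not surj cov} gives a concrete instance with $u=1$, so $c_u\equiv 1$: for $\mathrm{char}(k)\neq 2$, the pair $M=(1,-1)$, $\ga_\si=(1,-1)\in\G(k)$ is a $\Si$-equivariant representation whose class in $H^1(\Si,\G(k))\cong\{\pm 1\}\times\{\pm 1\}$ is $(1,-1)\neq 1$; counting, there are eight isomorphism classes of $(1,1)$-dimensional objects of $\crep_k(Q,\Si,1)$ but only two orbits in $\Rep^{\Si}(k)/\G^{\Si}(k)$. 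So the lemma as stated is incorrect. What your argument actually establishes, and what should replace it, is that the natural map is a bijection from ${_u}\Rep^{\Si}(k)/\,{_u}\G^{\Si}(k)$ onto the set of isomorphism classes with trivial invariant; equivalently, the set of isomorphism classes of all $d$-dimensional objects of $\crep_k(Q,\Si,c_u)$ is in bijection with $\bigsqcup_{[b]\in H^1_u(\Si,\G(k))}\;{_{u^b}}\Rep^{\Si}(k)/\,{_{u^b}}\G^{\Si}(k)$. This corrected form is exactly what Theorem \ref{decomp_thm_qaut_intro} reflects (the classes $\ov{[b]}$ index genuinely distinct pieces of the fixed locus), and Corollary \ref{modular_interp_of_Sigma_fixed_pts_as_Sigma_equiv_rep} needs the same restriction; so nothing downstream breaks, but you should not try to close the gap you flagged, because it cannot be closed.
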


\noindent The point of the above result is that there is a natural notion of $\theta$-semistability in $\crep_k(Q,\Si,c_u)$, that will eventually coincide with GIT semistability for the ${_u}\G^{\Si}$-action on ${_u}\Rep^{\Si}$ (with respect to the character $\chi_\theta|_{{_u}\G^{\Si}}$) for $\Si$-compatible $\theta$; see Theorem \ref{comparison_with_GIT_stability}.

\begin{defn}[$(\Si,\theta)$-(semi)stability]
Let $c\in Z^2(\Si,k^\times)$. A $(\Si,c)$-equivariant representation $(W,\ga)$ of $Q$ is called $(\Si,\theta)$-(semi)stable if, for all non-zero proper subrepresentations $(W',\ga')$ in $\crep(Q,\Si,c)$, one has $\mu_\theta(W') (\leq) \mu_\theta(W)$.
\end{defn}

We henceforth fix a $\Si$-compatible stability parameter $\theta$. Let us provide a representation-theoretic version of Lemma \ref{lemma compare ss} (see Theorem \ref{comparison_with_GIT_stability}).

\begin{prop}\label{sst_as_an_equiv_rep}
Let $(W,\ga)$ be a $(\Si,c)$-equivariant representation of $Q$. Then the following are equivalent:
\begin{enumerate}
\item $W$ is $\theta$-semistable as a representation of $Q$.
\item $(W,\ga)$ is $(\Si,\theta)$-semistable as a $(\Si,c)$-equivariant representation of $Q$.
\end{enumerate}
\end{prop}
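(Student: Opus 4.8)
The plan is to prove the two implications separately, with (1)$\Rightarrow$(2) being essentially formal and (2)$\Rightarrow$(1) resting on the canonicity of the maximal destabilizing subobject. For (1)$\Rightarrow$(2), I would simply observe that the forgetful functor sends any nonzero proper $(\Si,c)$-equivariant subrepresentation $(W',\ga')$ of $(W,\ga)$ to a nonzero proper ordinary subrepresentation $W'\subset W$. Thus the subobjects tested in the definition of $(\Si,\theta)$-semistability form a subcollection of \emph{all} subrepresentations of $W$. If $W$ is $\theta$-semistable, then $\mu_\theta(W')\le\mu_\theta(W)$ holds for every nonzero proper $W'\subset W$, in particular for those underlying an equivariant subrepresentation, which is precisely $(\Si,\theta)$-semistability.

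For (2)$\Rightarrow$(1), I would argue by contrapositive. Suppose $W$ is not $\theta$-semistable, and let $W_{\max}\subset W$ denote its maximal destabilizing subobject, i.e.\ the first term of the Harder--Narasimhan filtration of $W$ for the slope $\mu_\theta$. Since $W$ is unstable we have $\mu_\theta(W_{\max})>\mu_\theta(W)$, so $W_{\max}$ is a nonzero proper subrepresentation. The key point is that $W_{\max}$ is canonical: it is the unique subrepresentation of $W$ of maximal slope containing every other subrepresentation of that same maximal slope. I would use this uniqueness to show that $W_{\max}$ is $\ga$-stable, meaning $\ga_\si(\si(W_{\max}))=W_{\max}$ for all $\si\in\Si$.

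To that end, fix $\si\in\Si$. By Lemma \ref{theta_semistability_preserved} (and the slope identity $\mu_\theta(\si(-))=\mu_\theta(-)$ established in its proof), the covariant equivalence $\si:\crep_k(Q)\lra\crep_k(Q)$ preserves slopes and inclusions of subrepresentations, hence sends the Harder--Narasimhan filtration of $W$ to that of $\si(W)$; in particular $\si(W_{\max})$ is the maximal destabilizing subobject of $\si(W)$. Since $\ga_\si:\si(W)\overset{\simeq}{\lra}W$ is an isomorphism of representations, it carries the maximal destabilizing subobject of $\si(W)$ onto that of $W$, whence $\ga_\si(\si(W_{\max}))=W_{\max}$ by uniqueness. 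Thus each $\ga_\si$ restricts to an isomorphism $\si(W_{\max})\overset{\simeq}{\lra}W_{\max}$, and these restrictions inherit the relation $\ga_{\si_1}\si_1(\ga_{\si_2})=c(\si_1,\si_2)\ga_{\si_1\si_2}$, so that $(W_{\max},\ga|_{W_{\max}})$ is an $(\Si,c)$-equivariant subrepresentation of $(W,\ga)$. As $\mu_\theta(W_{\max})>\mu_\theta(W)$, this contradicts the $(\Si,\theta)$-semistability of $(W,\ga)$, which completes the proof.

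The main obstacle is the canonicity step. One must invoke the existence and uniqueness of the maximal destabilizing subobject for the slope $\mu_\theta$ (which is legitimate, since both the total dimension $\sum_v\dim_k W_v$ and the degree $\sum_v\theta_v\dim_k W_v$ are additive in short exact sequences, so the usual Harder--Narasimhan theory applies), and then combine it cleanly with the slope-preservation of $\si$ from Lemma \ref{theta_semistability_preserved} together with the fact that the $\ga_\si$ are isomorphisms. Everything else --- the forward implication and the verification that the restricted maps $\ga_\si|_{W_{\max}}$ satisfy the same $2$-cocycle relation and so define an equivariant structure --- is routine.
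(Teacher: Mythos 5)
Your proposal is correct and follows essentially the same route as the paper: the forward implication is formal, and the converse is proved by contrapositive using the canonical maximal destabilizing subrepresentation (which the paper calls the strictly contradicting semistability subrepresentation, citing \cite[Definition 2.3]{HS_galois}), showing via slope-preservation of $\si$ and uniqueness that $\ga_\si(\si(W_{\max}))=W_{\max}$, so that $(W_{\max},\ga|_{W_{\max}})$ is an equivariant destabilizing subobject. Your additional verification that the restricted maps $\ga_\si|_{W_{\max}}$ inherit the $2$-cocycle relation is a detail the paper leaves implicit.
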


\begin{proof}
Evidently, (1) implies (2). For the converse, we proceed by contrapositive using the uniqueness of the strictly contradicting semistability subrepresentation (\textit{scss}) $U\subset W$ (\textit{cf.}\ \cite[Definition 2.3]{HS_galois}). Let us assume that $W$ is not $\theta$-semistable. Since subrepresentations of $\si(W)$ correspond bijectively to subrepresentations of $W$ and $\theta$ is $\Si$-compatible, we have that $\si(U)$ is the \textit{scss} of $\si(W)$ for each $\si\in \Si$. As $\ga_\si:\si(W)\lra W$ is an isomorphism, $\ga_\si(\si(U))$ is the \textit{scss} of $W$, thus \ $\ga_\si(\si(U)) = U$. In particular, $(U,\ga|_U)$ is a $(\Si,c)$-equivariant subrepresentation of $(W,\ga)$, which can therefore not be $(\Si,\theta)$-semistable.
\end{proof}

\noindent For $(W,\ga)\in \crep_k(Q,\Si,c)$, one can construct a unique Harder--Narasimhan filtration of $(W,\ga)$ with respect to $(\Si,\theta)$-semistability. Proposition \ref{sst_as_an_equiv_rep} then has the following corollary.

\begin{cor}
Let $(W,\ga)$ be a $(\Si,c)$-equivariant representation of $Q$. Then the Harder--Narasimhan filtration of $(W,\ga)$ with respect to $(\Si,\theta)$-semistability agrees with the Harder--Narasimhan filtration of $W$ with respect to $\theta$-semistability.
\end{cor}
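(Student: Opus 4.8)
The plan is to leverage the uniqueness of Harder--Narasimhan filtrations in both the ordinary and the equivariant categories, together with Proposition \ref{sst_as_an_equiv_rep}. Let $0 = W_0 \subset W_1 \subset \cdots \subset W_\ell = W$ denote the Harder--Narasimhan filtration of the underlying representation $W$ with respect to $\theta$-semistability, so that each subquotient $W_i/W_{i-1}$ is $\theta$-semistable and $\mu_\theta(W_1/W_0) > \cdots > \mu_\theta(W_\ell/W_{\ell-1})$. I will first show that this filtration is $(\Si,c)$-equivariant, and then verify that, as such, it satisfies the defining property of the Harder--Narasimhan filtration of $(W,\ga)$ with respect to $(\Si,\theta)$-semistability; the claim then follows from uniqueness.

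For the first step, fix $\si \in \Si$ and recall from Lemma \ref{theta_semistability_preserved} that the covariant functor $\si$ preserves $\theta$-slopes and $\theta$-semistability, and sends subrepresentations of $W$ bijectively to subrepresentations of $\si(W)$. Applying $\si$ to the filtration above therefore produces a filtration of $\si(W)$ whose subquotients are $\theta$-semistable with the same strictly decreasing slopes; by uniqueness of the Harder--Narasimhan filtration of $\si(W)$, its $i$-th term is exactly $\si(W_i)$. Composing with the isomorphism $\ga_\si : \si(W) \overset{\simeq}{\lra} W$, which carries the Harder--Narasimhan filtration of $\si(W)$ onto that of $W$, and invoking uniqueness once more, I obtain $\ga_\si(\si(W_i)) = W_i$ for every $i$. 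Hence each $\ga_\si$ restricts to an isomorphism $\si(W_i) \overset{\simeq}{\lra} W_i$, and these restrictions inherit the relation $\ga_{\si_1}\si_1(\ga_{\si_2}) = c(\si_1,\si_2)\ga_{\si_1\si_2}$ on $W_i$, so that each $(W_i, \ga|_{W_i})$ is a $(\Si,c)$-equivariant subrepresentation of $(W,\ga)$ and the induced structure makes each subquotient $W_i/W_{i-1}$ into a $(\Si,c)$-equivariant representation.

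It remains to identify this equivariant filtration with the equivariant Harder--Narasimhan filtration. Since the $\theta$-slope of an equivariant representation is by definition the $\theta$-slope of its underlying representation, the slopes of the subquotients remain strictly decreasing. Moreover, each subquotient $W_i/W_{i-1}$ is $\theta$-semistable, so by Proposition \ref{sst_as_an_equiv_rep} the equivariant representation $(W_i/W_{i-1}, \overline{\ga})$ is $(\Si,\theta)$-semistable. Thus $0 \subset (W_1,\ga|_{W_1}) \subset \cdots \subset (W_\ell, \ga|_{W_\ell}) = (W,\ga)$ is a filtration in $\crep_k(Q,\Si,c)$ with $(\Si,\theta)$-semistable subquotients of strictly decreasing slope, which is precisely the characterising property of the Harder--Narasimhan filtration of $(W,\ga)$; by its uniqueness the two filtrations coincide.

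I expect the only genuine point requiring care to be the equivariance step, namely verifying $\ga_\si(\si(W_i)) = W_i$. This is exactly the argument already used for the strictly contradicting semistability subrepresentation in the proof of Proposition \ref{sst_as_an_equiv_rep}, now applied to every term of the filtration simultaneously, and it rests entirely on the uniqueness of the ordinary Harder--Narasimhan filtration together with the functoriality of $\si$ recorded in Lemma \ref{theta_semistability_preserved}. Everything else is a formal consequence of Proposition \ref{sst_as_an_equiv_rep} and the two uniqueness statements.
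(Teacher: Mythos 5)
Your proof is correct and is essentially the argument the paper has in mind: the paper states this corollary without proof as an immediate consequence of Proposition \ref{sst_as_an_equiv_rep}, and the intended reasoning is exactly yours, namely that uniqueness of the ordinary Harder--Narasimhan filtration together with Lemma \ref{theta_semistability_preserved} forces $\ga_\si(\si(W_i)) = W_i$, so the ordinary filtration is $(\Si,c)$-equivariant, and then the easy direction of Proposition \ref{sst_as_an_equiv_rep} shows its subquotients are $(\Si,\theta)$-semistable, whence uniqueness of the equivariant Harder--Narasimhan filtration finishes the argument. Your write-up fills in precisely the details the paper leaves implicit.
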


\noindent 
If we replace semistability by stability, the statement of Proposition \ref{sst_as_an_equiv_rep} is no longer true in general, which can be remedied by introducing the following notion.

\begin{defn}[$(\Si,\theta)$-regularly stable]\label{rep_theorectic_rs}
A $(\Si,c)$-equivariant $k$-representation $(W,\ga)$ is called $(\Si,\theta)$-regularly stable if $W$ is $\theta$-stable as a $k$-representation.
\end{defn}

\begin{thm}\label{comparison_with_GIT_stability}
Let $u$ be a modifying family and let $c_u$ be the associated $2$-cocycle. Let $M\in {_u}\Rep^{\Si}$ and let $(W,\ga)$ be the corresponding $(\Si,c_u)$-equivariant representation of $Q$. Then:
\begin{enumerate}
\item $M$ is $({_u}\G^{\Si},\chi_\theta)$-semistable in the GIT sense if and only if $(W,\ga)$ is $(\Si,\theta)$-semistable as a $(\Si,c_u)$-equivariant representation.
\item $M$ is $(\Si,u,\chi_\theta)$-regularly stable in the sense of Definition \ref{GIT_rs} if and only if $(W,\ga)$ is $(\Si,\theta)$-regularly stable in the sense of Definition \ref{rep_theorectic_rs}.
\end{enumerate}
\end{thm}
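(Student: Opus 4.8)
The plan is to prove both equivalences by transporting each GIT condition on $M$ to a slope condition on the underlying representation of $Q$, and then back to the equivariant notion. The key preliminary observation is that, under the bijection between ${_u}\Rep^{\Si}(k)/{_u}\G^{\Si}(k)$ and isomorphism classes of $d$-dimensional $(\Si,c_u)$-equivariant representations, the point $M \in {_u}\Rep^{\Si}$ \emph{is} the underlying representation $W$ of its associated pair $(W,\ga)$, the equivariant structure being $\ga_\si := u_\si$: indeed the defining relation $u_\si \cdot \Phi_\si(M) = M$ says precisely that $u_\si : \si(W) \overset{\simeq}{\lra} W$, and the cocycle identity $\ga_{\si_1}\si_1(\ga_{\si_2}) = c_u(\si_1,\si_2)\,\ga_{\si_1\si_2}$ is immediate from the definition of $c_u$. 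I may therefore identify $W$ with $M$ and apply King's dictionary (Theorem \ref{GIT_const_of_Mod}) to one and the same object.

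I would dispatch part (2) first, as it requires no comparison between the two groups. By Definition \ref{rep_theorectic_rs}, $(W,\ga)$ is $(\Si,\theta)$-regularly stable if and only if $W$ is $\theta$-stable as a representation of $Q$; by Theorem \ref{GIT_const_of_Mod} this is equivalent to $M = W$ being $(\G,\chi_\theta)$-stable; and, by Definition \ref{GIT_rs} together with the description $\RepSiurs = {_u}\Rep^{\Si} \times_{\Rep} \Rep^{(\G,\chi_\theta)-s}$ from Theorem \ref{new_fSigma}, this last condition is exactly what it means for $M \in {_u}\Rep^{\Si}$ to be $(\Si,u,\chi_\theta)$-regularly stable.

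For part (1), I would assemble a chain of three equivalences: $M$ is $({_u}\G^{\Si},\chi_\theta)$-semistable $\iff$ $M$ is $(\G,\chi_\theta)$-semistable $\iff$ $W$ is $\theta$-semistable as a representation of $Q$ $\iff$ $(W,\ga)$ is $(\Si,\theta)$-semistable as a $(\Si,c_u)$-equivariant representation. The middle equivalence is King's Theorem \ref{GIT_const_of_Mod} and the last is Proposition \ref{sst_as_an_equiv_rep}; both are available off the shelf once the identification $W = M$ is in place. The first equivalence is the equality \eqref{eqn Si u theta ss} of Theorem \ref{new_fSigma}, read on the $\Si$-fixed point $M$.

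The main substantive input, and the step deserving the most care, is this first equivalence: the comparison of GIT semistability for the twisted fixed subgroup ${_u}\G^{\Si}$ with that for the full group $\G$. It is the modified analogue of Lemma \ref{lemma compare ss}, whose proof runs Kempf's argument --- the $\Si$-invariance (now under $\Psi^u$) of the canonical destabilising parabolic $P_M$, followed by the fact that $P_M \cap {_u}\G^{\Si}$ is a proper parabolic of ${_u}\G^{\Si}$ still destabilising $M$. The one point to verify is that this argument survives replacing the permutation action $\Psi$ by the twisted action $\Psi^u$; this is ensured because $\Psi^u_\si$ differs from $\Psi_\si$ by the inner automorphism $\Ad_{u_\si}$, on which the character $\chi_\theta$ is trivial, so $\chi_\theta$ remains $\Si$-invariant and the normalised Hilbert--Mumford functional is still $\Si$-invariant. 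Since \eqref{eqn Si u theta ss} already packages this comparison, the present proof ultimately reduces to citing the three results above in sequence.
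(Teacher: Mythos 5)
Your proposal is correct and takes essentially the same route as the paper: part (1) is proved by the identical chain of three equivalences --- $({_u}\G^{\Si},\chi_\theta)$-semistability of $M$ is equivalent to $(\G,\chi_\theta)$-semistability (the paper cites Lemma \ref{lemma compare ss} applied to the modified action, you cite its packaged form \eqref{eqn Si u theta ss} in Theorem \ref{new_fSigma}, which is the same content), then to $\theta$-semistability of $W$ by Theorem \ref{GIT_const_of_Mod}, then to $(\Si,\theta)$-semistability of $(W,\ga)$ by Proposition \ref{sst_as_an_equiv_rep}, while part (2) is deduced in both cases directly from Definitions \ref{GIT_rs} and \ref{rep_theorectic_rs} together with King's theorem. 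The only cosmetic differences are the order in which you treat the two parts and your added (correct) observation that the Kempf argument is insensitive to the inner twist by $u$ because characters are conjugation-invariant.
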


\begin{proof}
We will prove (1) using the Hilbert--Mumford criterion; then (2) follows along the same lines, in view of Definitions \ref{GIT_rs} and \ref{rep_theorectic_rs}.

Given splittings $W_v :=k^{d_v}= W_v' \oplus W_v''$ for all $v \in V$ and integers $r' > r''$, we can define a 1-parameter subgroup (1-PS) $\lambda$ of $\G$ by
\begin{equation}\label{1PS_subreps}
 \lambda(t) =\left\{  \left( \begin{array}{cc} t^{r'}\text{Id}_{W_v'} & 0 \\ 0 & t^{r''}\text{Id}_{W_v''} \end{array} \right) \right\}_{v \in V}
\end{equation}
and we assume that $\lim_{t \lra 0} \lambda(t) \cdot M$ exists, which is equivalent to the statement that the subspaces $W_v'$ determine a subrepresentation $M'$ of $M$ by \cite[$\S$3]{king}. Moreover, the inequality $\mu_\theta(M') \leq \mu_\theta(M)$ is equivalent to the inequality $\langle \chi_\theta,\lambda \rangle \geq 0$, where $\langle -,- \rangle$ denotes the natural pairing between characters and co-characters of $\G$. If we view $u_{\sigma,v} \in \GL_{d_v}(k)$ as an isomorphism $u_{\sigma,v} : W_{\sigma(v)} \lra W_v$, then we see that the above 1-PS $\lambda$ factors through $_u \G^\Si$ if and only if $u_{\sigma,v}(W_{\sigma(v)}') = W_v'$ for all $\sigma \in \Si$ and $v \in V$; that is, if and only if $M'$ is a $(\Si,c_u)$-equivariant subrepresentation, where the equivariant structure is given by the isomorphisms $u_\sigma$. By the Hilbert--Mumford criterion, $M$ is $({_u}\G^{\Si},\chi_\theta)$-semistable if and only if for all 1-PSs $\lambda$ of  $_u \G^\Si$ for which $\lim_{t \lra 0} \lambda(t) \cdot M$ exists, we have $\langle \chi_\theta,\lambda \rangle \geq 0$, and by induction on the length of the filtration it suffices to assume our 1-PSs are of the form in \eqref{1PS_subreps}. By the above equivalences, the $({_u}\G^{\Si},\chi_\theta)$-semistability of $M$ is equivalent to $M$ being $(\Si,\theta)$-semistable as a $(\Si,c_u)$-equivariant representation.
\end{proof}

\noindent We thus obtain a modular interpretation of the GIT quotients forming the domains of the morphisms ${_u}f_{\Si}$ and ${_u}f_{\Si}^{rs}$ introduced in Theorem \ref{new_fSigma}. The proof is similar to the proof of Theorem \ref{GIT_const_of_Mod}.

\begin{cor}\label{modular_interp_of_Sigma_fixed_pts_as_Sigma_equiv_rep}
The GIT quotient 
$$\mathcal{M}_{Q,d}^{(\Si,u,\theta)-ss} := {_u}\Rep^{\Si} /\!/_{\chi_\theta} \ {_u}\G^{\Si} $$ $$(\mathrm{resp.}\ \ModSiurs := ({_u}\Rep^{\Si})^{\chi_\theta-rs} / \ {_u}\G^{\Si})$$
is a coarse moduli space for $(\Si,\theta)$-semistable (resp.\ $(\Si,\theta)$-regularly stable) $(\Si,c_u)$-equivariant $d$-dimensional $k$-representations of $Q$.
\end{cor}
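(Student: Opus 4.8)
The plan is to mirror King's proof of Theorem \ref{GIT_const_of_Mod}, transporting each step through the dictionary between the ${_u}\G^{\Si}$-action on ${_u}\Rep^{\Si}$ and $(\Si,c_u)$-equivariant representations of $Q$. The three facts that make this transport possible are already in place: the group ${_u}\G^{\Si}$ is reductive (Lemma \ref{lemma gp fixed locus reductive}), so ${_u}\Rep^{\Si} /\!/_{\chi_\theta} {_u}\G^{\Si}$ is a good quotient of the $({_u}\G^{\Si},\chi_\theta)$-semistable locus and restricts to a geometric quotient on the stable locus; the notion of $({_u}\G^{\Si},\chi_\theta)$-(semi)stability of a point $M$ coincides with $(\Si,\theta)$-(semi)stability of the corresponding equivariant representation (Theorem \ref{comparison_with_GIT_stability}); and isomorphism classes of $d$-dimensional objects of $\crep_k(Q,\Si,c_u)$ are in bijection with the orbit set ${_u}\Rep^{\Si}(k)/{_u}\G^{\Si}(k)$ (the bijection established above).

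First I would define the moduli functor. A family of $(\Si,c_u)$-equivariant $d$-dimensional representations over a $k$-scheme $S$ is a tuple $((\cW_v)_{v\in V},(\pphi_a)_{a\in A},(\ga_\si)_{\si\in\Si})$ where the $\cW_v$ are rank-$d_v$ vector bundles on $S$, the $\pphi_a\colon\cW_{t(a)}\to\cW_{h(a)}$ are $\cO_S$-linear, and the $\ga_\si\colon\si(\cW)\overset{\simeq}{\lra}\cW$ are isomorphisms satisfying the $c_u$-twisted relation $\ga_{\si_1}\si_1(\ga_{\si_2})=c_u(\si_1,\si_2)\ga_{\si_1\si_2}$; two such families are equivalent if they differ by bundle isomorphisms commuting with the $\ga_\si$. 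As in King, the key observation is that such a family is Zariski-locally on $S$ (after trivialising the $\cW_v$) given by a morphism $S\to {_u}\Rep^{\Si}$, well-defined up to the ${_u}\G^{\Si}$-action: the equivariant structure $(\ga_\si)$ is precisely the data forcing the image to land in the fixed locus of the modified action $\Phi^u$, and the $c_u$-twist matches the defining relation of ${_u}\Rep^{\Si}$.

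Next I would construct the natural transformation to the functor of points of the GIT quotient. Composing the local morphisms $S\to {_u}\Rep^{\Si}$ with the quotient map and using that ${_u}\Rep^{\Si} /\!/_{\chi_\theta} {_u}\G^{\Si}$ is a categorical quotient, the local morphisms agree on overlaps (they differ by the ${_u}\G^{\Si}$-action, which the quotient collapses) and glue to a morphism $S\to {_u}\Rep^{\Si} /\!/_{\chi_\theta} {_u}\G^{\Si}$; restricting to families whose fibres are $(\Si,\theta)$-semistable (resp. regularly stable) and invoking Theorem \ref{comparison_with_GIT_stability} confines the image to the (semi)stable locus. Bijectivity on $k$-points then follows: in the regularly stable case the geometric quotient identifies $k$-points with ${_u}\G^{\Si}(k)$-orbits of stable points, hence—via the bijection above and Theorem \ref{comparison_with_GIT_stability}—with isomorphism classes of $(\Si,\theta)$-regularly stable $(\Si,c_u)$-equivariant representations; in the semistable case, closed points of the good quotient correspond to closed ${_u}\G^{\Si}$-orbits, that is to $(\Si,\theta)$-polystable equivariant representations, and two semistable families map to the same point exactly when their Jordan--H\"older graded objects (with respect to $(\Si,\theta)$-stability) agree, which is the appropriate S-equivalence.

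Finally, coarse-representability reduces to universality: given any $k$-scheme $N$ receiving a natural transformation from the moduli functor, the tautological equivariant family on the semistable locus of ${_u}\Rep^{\Si}$ yields a ${_u}\G^{\Si}$-invariant morphism to $N$, which factors uniquely through the categorical quotient, supplying the required unique factorisation. The hard part will be the descent step of the second paragraph: one must verify that a $(\Si,c_u)$-equivariant structure on a family over an arbitrary (possibly non-reduced) base corresponds exactly to a morphism into the modified fixed locus ${_u}\Rep^{\Si}$, and that equivalence of families matches the ${_u}\G^{\Si}$-orbit relation—this is the only point where the modifying family $u$ and its cocycle $c_u$ genuinely enter, and the only step not already handled verbatim by King's argument or by Theorem \ref{comparison_with_GIT_stability}. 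The S-equivalence identification additionally requires the existence and uniqueness of Jordan--H\"older filtrations for $(\Si,\theta)$-semistable equivariant representations, which follows formally from the corresponding statement for representations of $Q$ together with the bijection on subobjects used in Proposition \ref{sst_as_an_equiv_rep}.
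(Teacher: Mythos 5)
Your proposal follows essentially the same route as the paper: the paper's own proof consists exactly of transporting King's argument for Theorem \ref{GIT_const_of_Mod} through Theorem \ref{comparison_with_GIT_stability}, the bijection between ${_u}\G^{\Si}(k)$-orbits in ${_u}\Rep^{\Si}(k)$ and isomorphism classes of $(\Si,c_u)$-equivariant representations, and the existence/uniqueness of equivariant Jordan--H\"older filtrations to define $S$-equivalence. Your more explicit functorial write-up (families over a base, descent to the quotient, the universal property of the categorical quotient) fills in what the paper leaves implicit, and you correctly single out the straightening of an equivariant structure to the fixed locus ${_u}\Rep^{\Si}$ as the one step where the modifying family $u$ and the cocycle $c_u$ genuinely intervene.
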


\noindent The points of $\ModSiurs$ are isomorphism classes of $(\Si,\theta)$-regularly stable $(\Si,c_u)$-equivariant $d$-dimensional $k$-representations of $Q$ and, as in the proof of Theorem \ref{GIT_const_of_Mod}, we can interpret the points of $\mathcal{M}_{Q,d}^{(\Si,u,\theta)-ss}$ as $S$-equivalence classes of $(\Si,\theta)$-semistable $(\Si,c_u)$-equivariant $d$-dimensional representations of $Q$ by noting that every $(\Sigma,\theta)$-semistable $(\Si,c)$-equivariant representation has a Jordan-H\"older filtration by $(\Si,c)$-equivariant subrepresentations whose successive quotients are $(\Si,\theta)$-stable; this Jordan--H\"older filtration is not necessarily 
unique but the associated graded object is and we say that two $(\Si,\theta)$-semistable $(\Si,c_u)$-equivariant representations are $S$-equivalent if the associated graded objects 
for their respective Jordan--H\"older filtrations are isomorphic as $(\Si,c_u)$-equivariant representations; the desired modular interpretation of points of the GIT quotient ${_u}\Rep^{\Si} /\!/_{\chi_\theta}\ {_u}\G^{\Si}$ then follows by the same arguments as in Theorem \ref{GIT_const_of_Mod}, using again the results of \cite{king} to relate $S$-equivalence in the representation-theoretic sense to equivalence of semistable orbits in the GIT setting.

As a consequence, we can revisit Theorem \ref{decomp_thm_qaut_intro} as follows. For the $\Si$-action on $\Mods$, there is a decomposition
\[ (\Mods)^{\Si} = \bigsqcup_{\begin{smallmatrix}[c_u] \in \im \cT \\ \ov{[b]} \in H^1_{u}(\Si, \G(k)) / H^1(\Si,\Delta(k)) \end{smallmatrix}} {_u}f^{rs}_{\Si} \left( \mathcal{M}^{(\Si,u^b,\theta)-rs}_{Q,d}\right)\]
where $u^b$ is a modifying family determined by $[b]$ and $u$. If $H^1(\Si,\Delta(k))=1$, then 
\[(\Mods)^{\Si} \cong \bigsqcup_{\begin{smallmatrix}[c_u] \in \im \cT \\ \ov{[b]} \in H^1_{u}(\Si, \G(k)) \end{smallmatrix}} \mathcal{M}^{(\Si,u^b,\theta)-rs}_{Q,d},\]
as one can deduce that the morphisms ${_{u^b}}f_{\Si}^{rs}$ are all closed immersions, by using the fact that $H^1(\Si,\Delta(k))=1$ and Theorem \ref{new_fSigma}.

\subsection{Actions by arbitrary groups of quiver automorphisms}\label{sec cov and contr case}

We now consider a subgroup $\Sigma \subset \Aut(Q)$ that contains at least one contravariant automorphism, as otherwise $\Sigma \subset \Aut^+(Q)$ and this is studied above. By restricting the homomorphism $\text{sign}: \Aut(Q) \lra \{\pm 1\}$ to $\Sigma$, we obtain a short exact sequence
\[ 1 \lra \Sigma^+ \lra \Sigma \lra \{ \pm 1\} \lra 1,\]
where $\Sigma^+ \subset \Aut^+(Q)$.

\begin{defn}\label{Si_comp_dim_vector_and_stab_param_general}
For $\Sigma \subset \Aut(Q)$, we make the following definitions.
\begin{enumerate}
\item A dimension vector $d=(d_v)_{v\in V}$ is $\Sigma$-compatible if $\sigma(d) = d$ for all $\sigma \in \Sigma$.
\item A stability parameter $\theta=(\theta_v)_{v\in V}$ is $\Sigma$-compatible if $\sigma(\theta) = \text{sign}(\sigma)\theta$ for all $\sigma \in \Sigma$.
\end{enumerate}
\end{defn}

For a $\Si$-compatible dimension vector $d$, we can construct induced actions of $\Sigma$ on $\Rep$ and $\G$ as follows: for $\sigma \in \Sigma$, we define automorphisms
\begin{equation}\label{action_on_reps_with_sign}
 \Phi_\sigma : \Rep \lra \Rep, \quad 
(M_a)_{a \in A} \lmt \left\{\begin{array}{ll}
(M_{\sigma(a)})_{a \in A} & \text{if } \text{sign}(\sigma) = 1,\\
 (^{t}M_{\sigma(a)})_{a \in A} & \text{if } \text{sign}(\sigma) = -1, \end{array} \right.
 \end{equation}
and 
\begin{equation}\label{action_on_group_with_sign}
\Psi_\sigma : \G \lra\G, \quad (g_v)_{v \in V} \lmt \left\{\begin{array}{ll}
(g_{\sigma(v)})_{v \in V} & \text{if } \text{sign}(\sigma) = 1,\\
 (^{t}g_{\sigma(v)}^{-1})_{v \in V} & \text{if } \text{sign}(\sigma) = -1. \end{array} \right. 
 \end{equation}
These $\Sigma$-actions are compatible with the $\G$-action on $\Rep$ in the sense that
\begin{equation}\label{mixed inv action comp}
\Phi_\sigma(g \cdot M) = \Psi_\sigma(g) \cdot \Phi_\sigma(M).
\end{equation}

Let $Q/\Sigma^+=(V/\Sigma^+,A/\Sigma^+, \tilde{h}, \tilde{t})$ denote the quotient quiver (\textit{cf.}\ Definition \ref{defn quotient quiver}).

\begin{lemma}\label{lemma induced invol}
Any automorphism $\sigma \in \Sigma < \Aut(Q)$ descends to an automorphism $\tilde{\sigma} \in \Aut(Q/\Sigma^+)$, which has the same sign as $\sigma$. This defines a group homomorphisms $\Sigma \lra \Aut(Q/\Sigma^+)$. Furthermore, the following statements hold.
\begin{enumerate}
\item If $\sigma$ is covariant, then $\tilde{\sigma}$ is the identity.
\item If $\sigma$ is contravariant, then $\tilde{\sigma}^2$ is the identity.
\end{enumerate}
\end{lemma}
\begin{proof}
Since $\Sigma^+ < \Sigma$ is a normal subgroup, it follows that $\sigma$ descends to a well-defined automorphism $\tilde{\sigma}$ of $Q/\Sigma^+$ given by
\[ \tilde{\sigma}( \Sigma^+ \cdot v) := \Sigma^+ \cdot \sigma(v)   \quad \text{and} \quad 
\tilde{\sigma}( \Sigma^+ \cdot a): = \Sigma^+ \cdot \sigma(a).\]
Moreover, $\tilde{\sigma}$ has the same sign as $\sigma$ and this defines a group homomorphism. If $\sigma$ is covariant, then it is an element of the subgroup $\Sigma^+$ and so $\tilde{\sigma}$ is the identity. If $\sigma$ is contravariant, then $\sigma^2$ is covariant and so it is the identity.
\end{proof}

By assumption that $\Sigma^+ \neq \Sigma$, there is a contravariant automorphism $\sigma \in \Sigma$ and this induces a contravariant involution $\tilde{\sigma}$ of $Q/\Sigma^+$ such that $\Sigma/\Sigma^+ \cong < \tilde{\sigma}>$ by Lemma \ref{lemma induced invol}. Moreover, the induced dimension vector 
$\tilde{d}$ on $Q/\Sigma^+$ is $\tilde{\sigma}$-compatible. Hence, we obtain 
the following description of the $\Sigma$-fixed loci:
\[ \Rep^{\Sigma} = (\Rep^{\Sigma^+})^{\Sigma/\Sigma^+} = \rep_{Q/\Si^+,\tilde{d}}^{\tilde{\sigma}}\]
and 
$\G^{\Sigma} = (\G^{\Sigma^+})^{\Sigma/\Sigma^+}= \mathbf{G}_{Q/\Sigma^+,\tilde{d}}^{\tilde{\sigma}}$.

\begin{lemma}
Let $d$ and $\theta$ be $\Sigma$-compatible. Then the following statements hold.
\begin{enumerate}
\item The $\Sigma$-action on $\Rep$ preserves the GIT (semi)stable sets $\Rep^{\chi_\theta-(s)s}$.
\item There is an induced algebraic $\Sigma$-action on the moduli spaces $\mathcal{M}^{\theta-(s)s}_{Q,d}$.
\end{enumerate}
\end{lemma}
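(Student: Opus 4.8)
The plan is to follow the same two-step pattern as in the proof of Proposition~\ref{prop induced action on quotient in covariant setting}, the only genuinely new input being the behaviour of the contravariant automorphisms. For part (1), since $\Phi_\sigma$ is manifestly a linear automorphism of the affine space $\Rep$ (reindexing by $\sigma$, followed in the contravariant case by transposition of each matrix) and the GIT (semi)stable loci are cut out by the nonvanishing of $\chi_\theta$-semi-invariant sections of the powers $\cL_\theta^n$, it suffices to show that for every $\sigma \in \Sigma$ the character $\chi_\theta$ is $\Psi_\sigma$-invariant. For covariant $\sigma \in \Sigma^+$ this is exactly the $\Sigma^+$-compatibility already used in Proposition~\ref{prop induced action on quotient in covariant setting}, so I would concentrate on a contravariant $\sigma$.

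The crux is therefore the identity $\chi_\theta \circ \Psi_\sigma = \chi_\theta$ for contravariant $\sigma$. First I would observe that the mere existence of a contravariant element in $\Sigma$ forces $\sum_{\alpha \in V} \theta_\alpha d_\alpha = 0$: reindexing by $\sigma$ and using $d_{\sigma(\alpha)} = d_\alpha$ and $\theta_{\sigma(\alpha)} = -\theta_\alpha$ shows this sum equals its own negative. Hence $\theta'_v = \theta_v \sum_{\alpha \in V} d_\alpha$ and $\theta'_{\sigma(v)} = -\theta'_v$. Substituting $\Psi_\sigma(g) = ({}^{t}g_{\sigma(v)}^{-1})_{v \in V}$ into \eqref{the_character} and using $\det {}^{t}g_{\sigma(v)}^{-1} = (\det g_{\sigma(v)})^{-1}$ together with $\theta'_{\sigma(v)} = -\theta'_v$ then yields the invariance. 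With $\Psi_\sigma$-invariance in hand, \eqref{mixed inv action comp} shows that $f \circ \Phi_\sigma$ is again a $\chi_\theta^n$-semi-invariant section whenever $f$ is, so $\Phi_\sigma$ permutes these sections and preserves $\Rep^{\chi_\theta-ss}$; being a $\Psi_\sigma$-equivariant isomorphism, it also preserves closed orbits and stabiliser dimensions, hence the stable locus $\Rep^{\chi_\theta-s}$. Equivalently, by the Hilbert--Mumford criterion one could argue representation-theoretically: the contravariant duality functor sends $W$ to $\sigma(W)$, and using $\sigma(\theta) = -\theta$ one checks it negates the $\theta$-slope while reversing the lattice of subobjects (subrepresentations of $\sigma(W)$ being the $\sigma$-images of quotients of $W$), so that the quotient formulation of $\theta$-(semi)stability for $W$ becomes the subobject formulation for $\sigma(W)$. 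This is the contravariant analogue of Lemma~\ref{theta_semistability_preserved} and is where the real content lies.

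For part (2), once (1) is established the argument is formal and identical to Proposition~\ref{prop induced action on quotient in covariant setting}: for each $\sigma \in \Sigma$ the morphism $\pi \circ \Phi_\sigma$ is $\G$-invariant, since \eqref{mixed inv action comp} gives $\pi(\Phi_\sigma(g \cdot M)) = \pi(\Psi_\sigma(g) \cdot \Phi_\sigma(M)) = \pi(\Phi_\sigma(M))$, using that $\Psi_\sigma \in \Aut(\G)$ and $\pi$ is $\G$-invariant. The universal property of the categorical quotient then produces a unique $\Phi'_\sigma$ fitting into the analogue of diagram \eqref{induced_action_on_quotient}, and since $\Phi$ and $\Psi$ are compatible $\Sigma$-actions on $\Rep$ and $\G$, these $\Phi'_\sigma$ assemble into an algebraic $\Sigma$-action on $\Mod$; restricting to the stable locus and, over a non-algebraically closed field, passing to the geometrically stable locus after base change gives the action on $\Modgs$ by the same reasoning. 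The main obstacle throughout is the contravariant case: both the character computation and, equivalently, the duality/slope-negation statement must be set up with care, whereas the covariant directions and the descent to the quotient are immediate from the results already proved.
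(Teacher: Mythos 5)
Your proof is correct and follows essentially the same route as the paper: the covariant part of (1) is handled by Proposition \ref{prop induced action on quotient in covariant setting}, the contravariant part by showing $\chi_\theta$ is $\Psi_\sigma$-invariant (equivalently, that duality negates the $\theta$-slope and reverses subobjects), which is precisely the content the paper outsources to \cite[Lemma 2.1]{young}, and part (2) follows by the same universal-property descent. The only difference is that you give a self-contained proof of the contravariant case -- including the correct observation that the presence of a contravariant automorphism forces $\sum_{\alpha \in V} \theta_\alpha d_\alpha = 0$ -- where the paper simply cites Young.
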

\begin{proof}
The proof of first statement follows from Proposition \ref{prop induced action on quotient in covariant setting} for covariant automorphism groups and \cite[Lemma 2.1]{young}. The proof of the second statement then follows from the universal property of the GIT quotient.
\end{proof}

Henceforth, we assume that $d$ and $\theta$ are both $\Si$-compatible, so there is an induced $\Si$-action on $\Mod$. If we restrict to the $\Si^+$-action, then there is a morphism
\[ f_{\Sigma^+} : \Rep^{\Si^+}/\!/_{\chi_\theta} \G^{\Si^+} \lra (\Mod)^{\Si^+} \]
by Proposition \ref{prop constr f cov}, and the domain of this morphism is isomorphic to the moduli space $\cM_{Q/\Sigma^+,\tilde{d}}^{\tilde{\theta}-ss}$ by Corollary \ref{modular_interp_of_Si_fixed_reps}. Moreover, there is an induced action of the contravariant involution $\tilde{\sigma}$ of $Q/\Si^+$ on the domain of $f_{\Sigma^+}$, as both $\tilde{d}$ and $\tilde{\theta}$ are  $\tilde{\sigma}$-compatible. Hence, by Proposition \ref{prop contr invol constr of f} below, there is a morphism
\[ f_{\tilde{\sigma}} : \rep_{Q/\Si^+,\tilde{d}}^{\tilde{\si}} /\!/_{\chi_{\tilde{\theta}}} \mathbf{G}_{Q/\Si^+,\tilde{d}}^{\tilde{\si}} \lra (M_{Q/\Sigma^+,\tilde{d}}^{\tilde{\theta}-ss})^{\ov{\si}} \]
and so we can define $f_{\Sigma} :=f_{\Si^+} \circ f_{\tilde{\sigma}}$ and obtain the following result.

\begin{prop}\label{prop constr of f general auto gp}
For $\Si \subset \Aut(Q)$, there is a morphism
\[ f_{\Sigma} : \Rep^{\Si}/\!/_{\chi_\theta} \G^{\Si} \lra (\Mod)^{\Si}. \]
\end{prop}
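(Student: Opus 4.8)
The plan is to construct $f_\Sigma$ as the composite $f_\Sigma := f_{\Sigma^+}\circ f_{\tilde\sigma}$ indicated in the preceding discussion, and then to verify that this composite is a well-defined morphism of $k$-varieties whose image lands in the full fixed locus $(\Mod)^\Sigma$ and not merely in $(\Mod)^{\Sigma^+}$. The two ingredients are already available: the morphism $f_{\Sigma^+}$ attached to the covariant normal subgroup $\Sigma^+\subset\Aut^+(Q)$ by Proposition~\ref{prop constr f cov}, and the morphism $f_{\tilde\sigma}$ attached to the induced contravariant involution $\tilde\sigma$ of the quotient quiver $Q/\Sigma^+$ by Proposition~\ref{prop contr invol constr of f}. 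There are two things to check: that the domains and codomains are compatible so that the composite exists, and, crucially, that $f_{\Sigma^+}$ is equivariant for the residual involution.

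First I would identify the domain. Using the fixed-locus descriptions $\Rep^\Sigma = \rep_{Q/\Si^+,\tilde{d}}^{\tilde{\sigma}}$ and $\G^\Sigma = \mathbf{G}_{Q/\Si^+,\tilde{d}}^{\tilde{\sigma}}$ obtained above, together with the character compatibility $\chi_{\tilde\theta} = \chi_\theta\circ\alpha$ from Corollary~\ref{modular_interp_of_Si_fixed_reps} (restricted to the $\tilde\sigma$-fixed subgroup), one obtains
\[ \Rep^\Sigma/\!/_{\chi_\theta}\G^\Sigma \cong \rep_{Q/\Si^+,\tilde{d}}^{\tilde{\sigma}}/\!/_{\chi_{\tilde{\theta}}}\mathbf{G}_{Q/\Si^+,\tilde{d}}^{\tilde{\sigma}}, \]
which is exactly the domain of $f_{\tilde\sigma}$. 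By Proposition~\ref{prop contr invol constr of f}, the image of $f_{\tilde\sigma}$ is contained in the fixed locus $(\cM_{Q/\Sigma^+,\tilde{d}}^{\tilde{\theta}-ss})^{\ov{\sigma}}$ for the induced involution, and by Corollary~\ref{modular_interp_of_Si_fixed_reps} the moduli space $\cM_{Q/\Sigma^+,\tilde{d}}^{\tilde{\theta}-ss}$ is precisely the domain $\Rep^{\Si^+}/\!/_{\chi_\theta}\G^{\Si^+}$ of $f_{\Sigma^+}$. Hence the composite $f_{\Sigma^+}\circ f_{\tilde\sigma}$ is defined on $\Rep^\Sigma/\!/_{\chi_\theta}\G^\Sigma$.

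The heart of the argument is to show that $f_{\Sigma^+}$ carries the induced involution on its domain to the residual $\Sigma/\Sigma^+$-action on $(\Mod)^{\Sigma^+}$. Because $\Sigma^+$ is normal in $\Sigma$, the locus $(\Mod)^{\Sigma^+}$ is $\Sigma$-stable and carries a residual action of $\Sigma/\Sigma^+ = \langle\tilde\sigma\rangle$ (independent of the chosen contravariant lift $\sigma$, since $\Sigma^+$ acts trivially there), and $(\Mod)^\Sigma = ((\Mod)^{\Sigma^+})^{\Sigma/\Sigma^+}$. I would establish the equivariance by recalling that $f_{\Sigma^+}$ is defined, via the universal property of the categorical quotient, from the closed immersion $(\Rep^{\Si^+})^{(\G^{\Si^+},\chi_\theta)-ss}\hookrightarrow\Rep^{(\G,\chi_\theta)-ss}$ of Proposition~\ref{prop constr f cov}. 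This immersion is equivariant for the $\Phi_\sigma$-action of the contravariant lift $\sigma$ (which preserves both semistable loci by $\Sigma$-compatibility of $\theta$, and restricts on $\Rep^{\Si^+}$ to the $\tilde\sigma$-action); hence $\Phi'_\sigma\circ f_{\Sigma^+}$ and $f_{\Sigma^+}\circ\tilde\sigma$ both satisfy the defining property of the unique induced morphism and therefore agree. Since $f_{\tilde\sigma}$ already lands in the $\tilde\sigma$-fixed locus, the composite $f_\Sigma$ lands in $((\Mod)^{\Sigma^+})^{\Sigma/\Sigma^+} = (\Mod)^\Sigma$, as required.

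The main obstacle I anticipate is exactly this last equivariance step, where one must match three things at once: the identification of $\Rep^{\Si^+}$ and $\G^{\Si^+}$ with quotient-quiver data, the compatibility of the GIT linearisations (so that $\chi_\theta$ restricts to $\chi_{\tilde{\theta}}$), and the verification that the involution induced on $\cM_{Q/\Sigma^+,\tilde{d}}^{\tilde{\theta}-ss}$ by $\tilde\sigma$ coincides with the residual $\Sigma/\Sigma^+$-action on $(\Mod)^{\Sigma^+}$ under these identifications. Once these compatibilities are in place, the fact that the composite is a morphism of $k$-varieties and that its image lies in $(\Mod)^\Sigma$ follows formally from the universal properties already used.
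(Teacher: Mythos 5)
Your proposal is correct and follows essentially the same route as the paper: the paper's own proof is precisely the composition $f_{\Sigma} := f_{\Sigma^+} \circ f_{\tilde{\sigma}}$, assembled from Proposition \ref{prop constr f cov}, Corollary \ref{modular_interp_of_Si_fixed_reps}, the identifications $\Rep^{\Sigma} = \rep_{Q/\Si^+,\tilde{d}}^{\tilde{\sigma}}$ and $\G^{\Sigma} = \mathbf{G}_{Q/\Sigma^+,\tilde{d}}^{\tilde{\sigma}}$, and Proposition \ref{prop contr invol constr of f}. Your explicit verification that $f_{\Sigma^+}$ intertwines the residual involution on its domain with the induced action on $(\Mod)^{\Sigma^+}$ (via the universal property of the categorical quotient), so that the composite genuinely lands in $(\Mod)^{\Sigma}$ rather than merely in $(\Mod)^{\Sigma^+}$, fills in a step the paper leaves implicit and does so correctly.
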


Finally, let us prove the following result for contravariant involutions.

\begin{prop}\label{prop contr invol constr of f}
Let $\sigma$ be a contravariant involution of a quiver $Q$ and suppose $d$ and $\theta$ are $\sigma$-compatible. Then $\G^{\si}$ is a reductive group and the following are equivalent for $M \in \Rep^{\si}$.
\begin{enumerate}
\item $M$ is GIT semistable for the $\G$-action on $\Rep$ with respect to the character $\chi_\theta : \G\lra \GG_m$.
\item $M$ is GIT semistable for the $\G^{\si}$-action on $\Rep^{\si}$ with respect to the restricted character $\chi_\theta : \G^{\si} \lra \GG_m$.
\end{enumerate}
Furthermore, there is an induced morphism
$ f_\sigma : \Rep^{\si}/\!/_{\chi_\theta} \G^{\si} \lra (\Mod)^{\si}$.
\end{prop}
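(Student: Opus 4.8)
The plan is to mirror the structure of the covariant case (Lemma \ref{lemma compare ss} and Proposition \ref{prop constr f cov}), adapting the Kempf-theoretic argument to account for the transpose-inverse in $\Psi_\sigma$. First I would establish reductivity of $\G^{\si}$. Writing $g = (g_v)_{v \in V}$, the fixed-point condition $\Psi_\sigma(g) = g$ reads $g_v = {}^{t}g_{\sigma(v)}^{-1}$ for all $v$. For a vertex $v$ not fixed by $\si$, the pair $(g_v, g_{\sigma(v)})$ is determined by $g_v$ alone, so those factors contribute a copy of $\GL_{d_v}$; for a vertex $v$ with $\si(v) = v$, the condition $g_v = {}^{t}g_v^{-1}$ says $g_v {}^{t}g_v = I$, cutting out an orthogonal group $\mathrm{O}_{d_v}$. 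Hence $\G^{\si}$ is isomorphic to a product of general linear and orthogonal groups, all of which are reductive. (Alternatively, one invokes \cite[Proposition A.8.12]{CGP} as in Lemma \ref{lemma gp fixed locus reductive}, though here connectedness may genuinely fail because of the orthogonal factors, so I would prefer the explicit description.)

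Next I would prove the equivalence of (1) and (2). That (1) implies (2) is immediate, since fewer one-parameter subgroups can destabilise. For the converse I would argue by contrapositive exactly as in Lemma \ref{lemma compare ss}, invoking \cite[Theorem 3.4]{Kempf} to produce the canonical parabolic $P_M \subset \G$ attached to a non-semistable $\si$-fixed point $M$. The key point is $\si$-invariance of the destabilising data: since $M$ is $\si$-fixed and the actions are compatible via \eqref{mixed inv action comp}, the set $\Lambda_M$ of one-parameter subgroups is $\si$-stable, and after averaging the length function over $\langle \si \rangle$ one gets a $\si$-invariant Hilbert--Mumford functional, forcing $\si(P_M) = P_M$ by uniqueness. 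The subtlety, absent in the covariant case, is that $\si$ acts on the factors $\GL_{d_v}$ of $\G$ by transpose-inverse rather than by a mere permutation, so $\si(P_M) = P_M$ translates into a condition relating $P_{M,v}$ and the \emph{opposite} parabolic of $P_{M,\sigma(v)}$. I would then check that $P_M \cap \G^{\si}$ is still a proper parabolic of $\G^{\si}$ containing a genuine destabilising one-parameter subgroup of $M$ lying in $\G^{\si}$; for a fixed vertex this amounts to intersecting a $\si$-stable parabolic of $\GL_{d_v}$ with $\mathrm{O}_{d_v}$, which is a proper parabolic of the orthogonal group.

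Having the equivalence, the construction of $f_\sigma$ is formal and parallels Proposition \ref{prop constr f cov}: the equivalence shows that the open subvarieties $(\Rep^{\si})^{(\G^{\si},\chi_\theta)-ss}$ and $\Rep^{\si} \times_{\Rep} \Rep^{(\G,\chi_\theta)-ss}$ agree on closed points, hence coincide, and the resulting closed immersion into $\Rep^{(\G,\chi_\theta)-ss}$ induces $f_\sigma$ through the universal property of the categorical quotient, with image landing in the $\si$-fixed locus by compatibility \eqref{mixed inv action comp}. I expect the main obstacle to be the transpose-inverse twist in the Kempf argument: verifying that the canonical parabolic remains $\si$-stable and that its intersection with $\G^{\si}$ is a proper parabolic producing an honest destabilising one-parameter subgroup requires care at the $\si$-fixed vertices, where orthogonal (rather than general linear) groups appear. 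This is precisely where one cannot simply quote \cite{young} for involutions but must redo the stability comparison.
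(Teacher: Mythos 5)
Your proposal is correct and takes essentially the same route as the paper: the paper's own proof is entirely by citation, quoting \cite[$\S$2.2]{young} for the reductivity of $\G^{\si}$ (the same product of general linear and orthogonal groups you derive explicitly), invoking Lemma \ref{lemma compare ss} for the equivalence of (1) and (2), and constructing $f_\sigma$ exactly as in Proposition \ref{prop constr f cov}. The subtlety you flag is real and is exactly what the paper's citation leaves implicit: the last step of the proof of Lemma \ref{lemma compare ss}, where $\Si$-stability of the Kempf parabolic $P_M$ forces $P_{M,v}=P_{M,v'}$ along an orbit and exhibits $P_M\cap\G^{\Si}$ as a product of parabolics of general linear groups, does not transfer verbatim to the contravariant case, since transpose-inverse sends a parabolic to an opposite (perpendicular-flag) parabolic, and your analysis at $\si$-fixed vertices --- a $\si$-stable parabolic of $\GL_{d_v}$ corresponds to a self-perpendicular flag whose isotropic part gives a proper parabolic of $\mathrm{O}_{d_v}$ --- is precisely the needed repair.
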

\begin{proof}
For the statement that $\G^{\si}$ is reductive, see \cite[$\S$2]{Derksen_Weyman} (reproved in \cite[$\S$2.2]{young}, where it is shown that $\G^{\si}$ is isomorphic to a product of orthogonal and general linear groups; if one applies a modifying family to \eqref{action_on_reps_with_sign} and \eqref{action_on_group_with_sign}, symplectic groups will appear). The proof of these equivalences follows by \cite[Proposition 2.2]{young} (or by appropriately adapting the argument in Lemma \ref{lemma compare ss}), and the construction of $f_\sigma$ follows as in Proposition \ref{prop constr f cov}.
\end{proof}

Analogously to Construction \ref{constr_type_map_def}, one can also show that for $\Sigma \subset \Aut(Q)$, there is a type map
\[ \cT: \Mods(k)^{\Si} \lra H^2(\Si,\Delta(k))\]
and define modifying families in an analogous way to Definition \ref{modifying_family_def} in order to produce a decomposition of the $\Si$-fixed locus.

For a contravariant involution $\sigma$ of $Q$, we can study the morphism $f_\sigma$, or more precisely, its restriction to the regularly stable locus
$ f_\sigma^{rs} : \cM^{(\si,\theta)-rs}_{Q,d} \lra (\Mods)^\si$, by using the group cohomology of $\ZZ/2\ZZ \cong <\!\!\sigma \! \!>$. 
One can prove a straightforward generalisation of Proposition \ref{non_empty_fibres} for the fibres of $f_{\sigma}^{rs}$; thus, $f_\sigma$ is injective, as $H^1(\ZZ/2\ZZ, k^\times) = 1$ (we recall that $\sigma$ now acts on $k^\times$ by inversion). 
One can also define modifying families, which are now given by a single element $u_\sigma \in \G(k)$ such that $u_\sigma \Psi_\sigma(u_\sigma) \in \Delta(k)$, and use these families to modify the actions $\Phi$ and $\Psi$, without changing the action on $\Mods$; as $\Z/2\Z$ acts on $k^\times$ by inversion, we have $H^2(\ZZ/2\ZZ, k^\times) = \{ \pm 1 \}$, so there are only two possible cohomology classes for the 2-cocycle associated to a modifying family. This appears in \cite{young} in different language, coming from physics: the duality structures in \emph{loc.\ cit.} correspond to our modifying families. One can also provide a decomposition of $(\Mods)^{\si} $ by varying these modifying families using the cohomology of $\ZZ/2\ZZ$. One can give a representation-theoretic description of this decomposition as in $\S$\ref{representation_theoretic_interp}, by defining notions of $(\sigma,\theta)$-(semi)stability for $(\si,c_u)$-equivariant representations. Young refers to such equivariant representations as self-dual representation (\textit{cf.}\ \cite[Theorem 2.7]{young} for an analogue of Theorem \ref{comparison_with_GIT_stability} in the contravariant setting).

One can thus provide a decomposition of $(\Mods)^{\Si}$ for an arbitrary subgroup $\Si \subset \Aut(Q)$ using the group cohomology of $\Si$; however, we do not go through the details, as it is analogous to the case where $\Si \subset \Aut^+(Q)$. 

\section{Branes}\label{branes_section}

Starting from a quiver $Q$, moduli spaces of representations of the doubled quiver $\ov{Q}$ (satisfying some relations) have a natural algebraic symplectic structure and we show that automorphisms of $\ov{Q}$ provide natural examples of Lagrangian and symplectic subvarieties. Over the complex numbers, these moduli spaces are hyperk\"{a}hler when they are smooth and we can describe the fixed locus in the language of branes \cite{Kapustin_Witten} as follows.

\begin{defn} A brane in a hyperk\"{a}hler manifold $(M,g,I,J,K,\omega_I,\omega_J,\omega_K)$ is a submanifold which is either holomorphic or Lagrangian with respect to each of the three K\"{a}hler structures on $M$.  A brane is called of type $A$ (respectively $B$) with respect to a given K\"{a}hler structure if it is Lagrangian (respectively holomorphic) for this K\"{a}hler structure. The type of the brane is encoded in a triple $T_IT_JT_K$, where $T_I=A$ or $B$ is the type for the K\"{a}hler structure $(g,I)$ and so on. 
\end{defn}

As $K=IJ$, there are 4 possible types of branes: $BBB$, $BAA$, $ABA$ and $AAB$. We will show that we can construct each type of brane as a fixed locus of an involution. The study of branes in Nakajima quiver varieties has already been initiated in \cite{fjm}, where the authors use involutions such as complex conjugation, multiplication by $-1$ and transposition, to construct different branes. In the present section, we construct new examples coming from automorphisms of the quiver.

\subsection{The algebraic case}

We assume throughout that $k$ is a field of characteristic different from $2$. 

\begin{defn}[Doubled quiver]
For a quiver $Q = (V,A,h,t)$, the doubled quiver is
$\overline{Q}=(V,\overline{A}, h,t)$ where $\overline{A} = A \cup A^*$ for $ A^* := \{ a^* : h(a) \lra t(a)\}_{a \in A}$.
\end{defn}

\noindent The central motivation for considering the doubled quiver is that 
$$\RepQbar = \Rep \times \Rep^* \cong T^*\Rep$$
is an algebraic symplectic variety, with the Liouville symplectic form $\omega$. 
Explicitly, if $M=(M_a,M_{a^*})_{a \in A}$ and $N = (N_a,N_{a^*})_{a\in A}$ are 
points in $\RepQbar$, then
\begin{equation}\label{Liouville_form} \omega(M,N) = \sum_{a \in A} \tr(M_aN_{a^*} - M_{a^*}N_a).\end{equation}

The action of $\GQbar = \G$ on $\RepQbar$ is symplectic and there is an algebraic moment map $ \mu :\RepQbar \lra \LieG^*$, 
where $\LieG $ is the Lie algebra of $\G$; explicitly, for $M \in \RepQbar$ and $B \in \LieG$ we have 
\begin{equation}\label{eqn alg mmap quiver}
\mu(M) \cdot B = \sum_{a \in A} \tr(M_{a^*} (B^{\#}_M)_a)= \sum_{a \in A} \tr(M_{a^*} (B_{h(a)}M_a - M_a B_{t(a)}))
\end{equation}
where $B^{\#}_{M}= (B_{h(a)} M_a - M_a B_{t(a)})_{a \in A}$ is the infinitesimal action of $B$ on $(M_a)_{a \in A}$. 
The moment map is a $\G$-equivariant morphism that satisfies the infinitesimal lifting property
$ d_M\mu(\eta) \cdot B = \omega(B^{\#}_M,\eta)$. 
By using the standard non-degenerate quadratic form $(B,C)\lmt \tr(^{t}BC)$ on the Lie algebra of each general linear group, we can naturally identify $\LieG \cong \LieG^*$ and view the moment map as a morphism $\mu :\RepQbar \lra \LieG$ given by
$\mu(M) = \sum_{a \in A} [M_a,M_{a^*}]$.

\begin{defn}\label{alg_symp_red}
Let $\chi$ be a character of $\G$ and let $\eta \in \LieG$ be a coadjoint fixed point; then $\G$ acts on $\mu^{-1}(\eta)$ by 
the equivariance of the moment map. The algebraic 
symplectic reduction at $(\chi,\eta)$ is the GIT quotient $\mu^{-1}(\eta) /\!/_{\chi} \G$.
\end{defn}

If the GIT semistable and stable locus on $\mu^{-1}(\eta)$ with respect to $\chi$ agree, then as $\Gbar$ acts freely on the stable locus, the symplectic reduction $\mu^{-1}(\eta)/\!/_{\chi} \G$ is a smooth algebraic symplectic variety, whose form is induced by the Liouville form on $T^*\rep_d(Q)$; 
this is an algebraic version of the Marsden--Weinstein Theorem \cite{mw} 
(for example, see \cite{ginzburg}). For $\chi= \chi_{\theta}$, the closed $\G$-invariant subvariety $\mu^{-1}(\eta) \subset \rep_{\overline{Q},d}$ determines a closed immersion
\[ \mu^{-1}(\eta)/\!/_{\chi_\theta} \G \hookrightarrow \cM^{\theta-ss}_{\overline{Q},d}.\]
Moreover, for a tuple of complex numbers $(\eta_v)_{v \in V}$, which determines an adjoint fixed point $\eta = (\eta_v \text{Id}_{d_v})_{v \in V} \in \fg$, we have that $\mu^{-1}(\eta)/\!/_{\chi_\theta} \G$ is the moduli space of $\theta$-semistable $d$-dimensional representations of $\overline{Q}$ satisfying the relations 
\[\cR_\eta = \Big\{ \sum_{a\, |\, t(a) = v} M_a M_{a^*} - \sum_{a\, |\, h(a) =v} M_{a^*}M_a = \eta_v I_{d_v} \: \: \: \forall v\in V\Big\}.\]

We now describe the fixed loci of quiver automorphism groups acting on this algebraic symplectic reduction in terms of its symplectic geometry. 

\begin{defn}\label{def anti sympl auto}
We let $\Aut_*(\overline{Q})$ denote the subgroup of $\Aut(\overline{Q})$ consisting of automorphisms $\sigma$ satisfying the conditions:
\begin{enumerate}
\item For all $a \in A$, $\sigma(a^*) = \sigma(a)^*$.
\item Either $\si(A) \subset A$ or $\si(A)\subset A^*$.
\end{enumerate}
An automorphism $\sigma\in \Aut_*(\ov{Q})$ is said to be $\ov{Q}$-symplectic if $\si(A)\subset A$, and $\ov{Q}$-anti-symplectic if $\sigma(A) \subset A^*$.
\end{defn}

\noindent Every $\sigma \in \Aut(Q)$ can be extended to a $\ov{Q}$-symplectic automorphism $\sigma \in \Aut_*(\overline{Q})$ by $\sigma(a^*) := \sigma(a)^*$. There is a canonical contravariant involution $\sigma \in \Aut_*(\overline{Q})$ which fixes all vertices and is given by $\sigma(a):=a^*$ on $a \in A$; it is $\ov{Q}$-anti-symplectic. Note that Condition (1) in Definition \ref{def anti sympl auto} does not imply Condition (2): consider for instance $$Q=\xymatrix@1{ \bullet \ar[r]^{a} & \bullet \ar@/^/[r]^{b} & \bullet \ar@/^/[l]_{c}}$$
and the contravariant involution $\sigma \in \Aut_*(\overline{Q})$ which fixes all vertices and sends $\sigma(a) = a^*$, $\sigma(b) =c$ and $\sigma(b^*) = c^*$.

There is a group morphism $s:\Aut_*(\ov{Q})\lra \{\pm1\}$ sending $\si\in\Aut_*(\ov{Q})$ to $-1$ if and only if $\sigma$ is $\ov{Q}$-anti-symplectic. 

\begin{prop}\label{lemma sigma sympl}
Let $\sigma \in Aut_*(\ov{Q})$ and let $d$ be a $\sigma$-compatible dimension vector. Then $\si^*\omega = s(\si)\,\omega$; that is, if $\si$ satisfies Property (1) of Definition \ref{def anti sympl auto}, then Property (2) implies that $\si$ is either symplectic or anti-symplectic in the usual sense.
Moreover, for $M \in \RepQbar$ and $B \in \LieG$, we have $\mu(\sigma(M)) \cdot \sigma(B) = s(\si) (\mu(M) \cdot B)$.
\end{prop}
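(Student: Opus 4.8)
The plan is to prove the identity $\sigma^*\omega=s(\si)\,\omega$ first, by a direct computation from the defining formula \eqref{Liouville_form}, and then to deduce the moment-map identity from it together with the infinitesimal compatibility of the two actions, so as to avoid a second case analysis. Throughout I would use that $\sigma$ acts \emph{linearly} on $\RepQbar$ via $\Phi_\sigma$: on $M=(M_b)_{b\in\ov{A}}$ it is given by $\Phi_\sigma(M)_b=M_{\sigma(b)}$ if $\sigma$ is covariant and by $\Phi_\sigma(M)_b={}^{t}M_{\sigma(b)}$ if $\sigma$ is contravariant (the action of \S\ref{sec cov and contr case} applied to $\ov{Q}$). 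Since $\Phi_\sigma$ is linear, $\sigma^*\omega(M,N)=\omega(\Phi_\sigma M,\Phi_\sigma N)$ is again bilinear, and it suffices to track how reindexing and transposition act on the summand $\tr(M_aN_{a^*}-M_{a^*}N_a)$ indexed by $a\in A$.

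For $\sigma^*\omega=s(\si)\,\omega$ I would split into the two cases dictated by Property~(2) of Definition~\ref{def anti sympl auto}, noting that the covariant/contravariant distinction is orthogonal to the (anti\nobreakdash-)symplectic one. If $\sigma$ is $\ov{Q}$-symplectic then $\sigma(A)=A$ and $\sigma(A^*)=A^*$, so substituting $a'=\sigma(a)$ in the sum over $A$ recovers $\omega$ verbatim. If $\sigma$ is $\ov{Q}$-anti-symplectic then $\sigma$ interchanges $A$ and $A^*$ (using $\sigma(a^*)=\sigma(a)^*$ and $(b^*)^*=b$): for $a\in A$ one has $\sigma(a)=b^*$ and $\sigma(a^*)=b$, which swaps the two terms $M_aN_{a^*}$ and $M_{a^*}N_a$ and hence multiplies each summand by $-1$. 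In the contravariant cases one additionally invokes $\tr({}^{t}X\,{}^{t}Y)=\tr(YX)=\tr(XY)$ to see that the transposition leaves each trace unchanged and contributes no extra sign. In every case the $\sigma$-compatibility of $d$ is what guarantees that the reindexed matrices have the sizes for which the traces are defined. This yields $\sigma^*\omega=s(\si)\,\omega$, and in particular that $\sigma$ is symplectic or anti-symplectic in the usual sense. I expect the bookkeeping of these cases — especially matching head/tail data under a contravariant $\sigma$ — to be the most error-prone (though conceptually routine) step.

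For the moment map I would assemble two ingredients. First, differentiating the compatibility \eqref{mixed inv action comp}, $\Phi_\sigma(g\cdot M)=\Psi_\sigma(g)\cdot\Phi_\sigma(M)$, at $g=1$ in the direction $B\in\LieG$ and using that $\Phi_\sigma$ is linear gives the infinitesimal equivariance
\[ \Phi_\sigma\big(B^{\#}_M\big)=\big(\sigma(B)\big)^{\#}_{\Phi_\sigma M}, \]
where $\sigma(B)$ is the Lie-algebra action (the differential of $\Psi_\sigma$). Second, a direct manipulation of \eqref{eqn alg mmap quiver} and \eqref{Liouville_form} using only cyclicity of the trace shows that $\mu$ is expressed through $\omega$ by a single universal constant,
\[ \mu(M)\cdot B=-\tfrac{1}{2}\,\omega\big(M,\,B^{\#}_M\big), \]
the exact value of the constant being immaterial for what follows — only its independence of $\sigma,M,B$ matters.

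Combining these gives
\begin{align*}
\mu(\sigma(M))\cdot\sigma(B)
&= -\tfrac{1}{2}\,\omega\big(\Phi_\sigma M,\,(\sigma(B))^{\#}_{\Phi_\sigma M}\big)
 = -\tfrac{1}{2}\,\omega\big(\Phi_\sigma M,\,\Phi_\sigma(B^{\#}_M)\big)\\
&= -\tfrac{1}{2}\,(\sigma^*\omega)\big(M,\,B^{\#}_M\big)
 = -\tfrac{1}{2}\,s(\si)\,\omega\big(M,\,B^{\#}_M\big)
 = s(\si)\,\big(\mu(M)\cdot B\big),
\end{align*}
where the second equality is the infinitesimal equivariance and the fourth is the first part of the proposition. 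Thus the only genuinely new work beyond the computation for $\omega$ is verifying the two displayed identities above, both of which are short; the main obstacle remains the case analysis establishing $\sigma^*\omega=s(\si)\,\omega$.
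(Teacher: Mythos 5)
Your proof is correct. The first half (the identity $\si^*\omega = s(\si)\,\omega$) is essentially the paper's own argument: the same case analysis over arrows with $\sigma(a)\in A$ versus $\sigma(a)\in A^*$, using $\sigma(a^*)=\sigma(a)^*$, with the contravariant case reducing to the covariant one via $\tr({}^{t}X\,{}^{t}Y)=\tr(XY)$. The second half, however, takes a genuinely different route. The paper proves the moment-map identity $\mu(\sigma(M))\cdot\sigma(B)=s(\si)(\mu(M)\cdot B)$ by a second direct expansion of \eqref{eqn alg mmap quiver}, splitting the sum over arrows exactly as in the computation for $\omega$. You instead deduce it formally from the first half together with two auxiliary facts: the infinitesimal equivariance $\Phi_\sigma(B^{\#}_M)=(\sigma(B))^{\#}_{\Phi_\sigma M}$, obtained by differentiating \eqref{mixed inv action comp} at the identity (legitimate, since $\Phi_\sigma$ is linear and $\sigma(B)$ is precisely the differential of $\Psi_\sigma$, matching the paper's formula for the induced map on $\LieG$); and the identity $\mu(M)\cdot B=-\tfrac{1}{2}\,\omega(M,B^{\#}_M)$, which I have checked is exactly right, including the constant --- it follows from cyclicity of the trace, and is also consistent with the paper's infinitesimal lifting property $d_M\mu(\eta)\cdot B=\omega(B^{\#}_M,\eta)$ via Euler's formula, since $\mu$ is quadratic in $M$. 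Your chain of equalities then closes correctly. What your approach buys is the elimination of the second bookkeeping argument and a clearer conceptual statement: equivariance of the moment map with a sign is a formal consequence of $\si^*\omega=s(\si)\,\omega$ plus equivariance of the infinitesimal action, and the argument would apply verbatim to any linear action on a symplectic vector space whose moment map is normalised this way. What the paper's version buys is self-containedness: it never needs the relation between $\mu$ and $\omega$, only the explicit formulas, so it is robust against any normalisation conventions for $\mu$.
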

\begin{proof}
As $d$ is $\sigma$-compatible, there is an induced action of $\sigma$ on $\RepQbar$. 
For $M \in \RepQbar$ and let $Y,Z \in T_M \RepQbar \cong \RepQbar$, we have $d_M\sigma(Y)= (Y_{\sigma(a)})_{a \in \overline{A}}$ if $\sigma$ is covariant, and $ d_M\sigma(Y)= (^{t}Y_{\sigma(a)})_{a \in \overline{A}}$ if $\sigma$ is contravariant. As the calculations are similar in the covariant and contravariant case, we only give the details for a covariant automorphism $\sigma$, which is by assumption either $\ov{Q}$-symplectic or $\ov{Q}$-anti-symplectic. As $(\sigma^*\omega)_M(Y,Z) = \omega_{\si(M)}(d_M\si(Y),d_M\si(Z))$, we have
\begin{align*}
(\sigma^*\omega)_M(Y,Z) & = \sum_{a \in A} \tr(Y_{\sigma(a)} Z_{\sigma(a^*)} -  Y_{\sigma(a^*)}Z_{\sigma(a)}) \\
& = \sum_{\begin{smallmatrix}a \in A \,|\, \\ b:=\sigma(a) \in A\end{smallmatrix}} \tr(Y_{b} Z_{b^*} -  Y_{b^*} Z_{b}) - \sum_{\begin{smallmatrix} a \in A\, |\, \\ c:=\sigma(a)^* \in A  \end{smallmatrix}} \tr(Y_{c} Z_{c^*}-Y_{c^*} Z_{c}) \\
& = s(\si)\, \omega_M(Y,Z) 
 \end{align*}
where in the second equality we use the fact that $\sigma(a^*)=\si(a)^*$.
 
The derivative of the automorphism $\sigma$ on $\G$ induces an automorphism
\[ \sigma: \LieG \lra \LieG, \quad (B_v)_{v\in V} \lmt \left\{ \begin{array}{cl}
(B_{\sigma(v)})_{v \in V} & \text{if } \sigma \text{ is covariant}, \\
 (-^{t}B_{\sigma(v)})_{v \in V} & \text{if } \sigma \text{ is contravariant.} \end{array} \right.\]
Thus, for covariant $\sigma$ and for $M \in \RepQbar$ and $B \in \LieG$, we have
\begin{align*}
\mu(\sigma(M)) \cdot \sigma(B)  & =  \sum_{a \in A} \tr( M_{\sigma(a)} M_{\sigma(a^*)} B_{\sigma(h(a))} -B_{\sigma(t(a))}M_{\sigma(a^*)}M_{\sigma(a)}) \\
& =  \sum_{\begin{smallmatrix}a \in A \,|\,\\b=\sigma(a) \in A \end{smallmatrix}} \tr ( M_b M_{b^*} B_{h(b)} - B_{t(b)}  M_{b^*}M_{b})  \\
& \quad \quad  - \sum_{\begin{smallmatrix}a \in A \,|\, \\ c=\sigma(a)^* \in A \end{smallmatrix}} \tr ( M_c M_{c^*} B_{h(b)} - B_{t(b)} M_{c^*}M_{c})
\\
& = s(\si)(\mu(M)\cdot B)
\end{align*}
where in the second equality we use the fact that $\sigma(a^*)=\si(a)^*$.
\end{proof}

\begin{cor}\label{cor sigma on asr}
Let $\sigma \in \Aut_*(\overline{Q})$ be an involution and suppose that the dimension vector $d$ and stability parameter $\theta$ are $\sigma$-compatible and that $\eta \in \LieG^*$ is coadjoint fixed and satisfies $\sigma(\eta) = s(\sigma) \eta$. Then $\sigma$ preserves $\mu^{-1}(\eta)$ and there is an induced automorphism $\sigma: \mu^{-1}(\eta)/\!/_{\chi_\theta}\G \lra \mu^{-1}(\eta)/\!/_{\chi_\theta} \G$.
\end{cor}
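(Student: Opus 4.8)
The plan is to establish the statement in two stages: first that the involution $\sigma$, acting through $\Phi_\sigma$ on $\RepQbar$, maps the fibre $\mu^{-1}(\eta)$ into itself, and second that this restricted action descends to the GIT quotient via the universal property of the categorical quotient, yielding an involution of $\mu^{-1}(\eta)/\!/_{\chi_\theta}\G$. As $\sigma$ is an involution, this descended morphism is automatically an automorphism.

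For the first stage, the key input is the moment-map identity $\mu(\sigma(M)) \cdot \sigma(B) = s(\sigma)\,(\mu(M) \cdot B)$ established in Proposition \ref{lemma sigma sympl}, valid for all $M \in \RepQbar$ and $B \in \LieG$. Fix $M \in \mu^{-1}(\eta)$, so $\mu(M) = \eta$. Writing $\sigma^*$ for the contragredient action of $\sigma$ on $\LieG^*$, which is again an involution since $\sigma$ is, the identity rearranges to $\mu(\sigma(M)) = s(\sigma)\,\sigma^*(\mu(M)) = s(\sigma)\,\sigma^*(\eta)$. The hypothesis $\sigma(\eta) = s(\sigma)\,\eta$ then gives $\mu(\sigma(M)) = s(\sigma)^2\,\eta = \eta$, the cancellation $s(\sigma)^2 = 1$ being exactly what makes the two signs conspire. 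Hence $\sigma(M) \in \mu^{-1}(\eta)$, so $\Phi_\sigma$ preserves $\mu^{-1}(\eta)$. Equivalently, one may run this on test vectors: for arbitrary $C \in \LieG$, set $B = \sigma(C)$ and combine $\mu(\sigma(M))\cdot C = s(\sigma)\,\eta\cdot\sigma(C)$ with $\eta\cdot\sigma(C) = s(\sigma)\,\eta\cdot C$ to get $\mu(\sigma(M))\cdot C = \eta\cdot C$ for all $C$.

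For the second stage, I would first note that, since $d$ and $\theta$ are $\sigma$-compatible and $\langle\sigma\rangle$ is a finite subgroup of $\Aut(\overline{Q})$, the induced $\sigma$-action on $\RepQbar$ preserves the GIT $\chi_\theta$-(semi)stable locus by the arguments of $\S$\ref{sec cov and contr case}. Combined with the first stage, $\Phi_\sigma$ therefore preserves the open subvariety $(\mu^{-1}(\eta))^{\chi_\theta-ss} = \mu^{-1}(\eta) \cap \RepQbar^{\chi_\theta-ss}$. Writing $\pi$ for the good quotient $(\mu^{-1}(\eta))^{\chi_\theta-ss} \to \mu^{-1}(\eta)/\!/_{\chi_\theta}\G$, the compatibility relation \eqref{mixed inv action comp} shows that $\pi \circ \Phi_\sigma$ is $\G$-invariant: for $g \in \G$, we have $\pi(\Phi_\sigma(g\cdot M)) = \pi(\Psi_\sigma(g)\cdot \Phi_\sigma(M)) = \pi(\Phi_\sigma(M))$. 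The universal property of the categorical quotient then produces a unique morphism $\overline{\sigma}: \mu^{-1}(\eta)/\!/_{\chi_\theta}\G \to \mu^{-1}(\eta)/\!/_{\chi_\theta}\G$ making the evident square commute, exactly as in the proof of Proposition \ref{prop induced action on quotient in covariant setting}. Since $\sigma$ is an involution, so is $\overline{\sigma}$, whence it is an automorphism.

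I expect the only genuinely delicate point to be the sign bookkeeping in the first stage: one must correctly identify the action of $\sigma$ on $\LieG^*$ dual to its action on $\LieG$ (as written out in the proof of Proposition \ref{lemma sigma sympl}) and interpret the hypothesis $\sigma(\eta) = s(\sigma)\,\eta$ against the trace identification $\LieG \cong \LieG^*$, so that the two occurrences of $s(\sigma)$ cancel. The second stage is entirely formal and mirrors the descent arguments already carried out several times in $\S$\ref{quiver_aut_section}.
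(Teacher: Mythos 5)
Your proposal is correct and follows exactly the route the paper intends: stage one is the sign computation $\mu(\sigma(M))\cdot C = s(\sigma)\,\eta\cdot\sigma(C) = s(\sigma)^2\,\eta\cdot C = \eta\cdot C$ coming directly from Proposition \ref{lemma sigma sympl} together with the hypothesis $\sigma(\eta)=s(\sigma)\eta$, and stage two is the standard descent through the good quotient using preservation of the $\chi_\theta$-semistable locus and the universal property, exactly as in Proposition \ref{prop induced action on quotient in covariant setting} and $\S$\ref{sec cov and contr case}. The paper leaves these details implicit (the statement is presented as an immediate corollary), and your write-up supplies them correctly, including the one genuinely delicate point of interpreting $\sigma(\eta)$ via the contragredient action so that the two factors of $s(\sigma)$ cancel.
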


We can now describe the geometry of the fixed locus of an involution.

\begin{prop}\label{Lagr fixed locus for a-sym invol}
Let $\sigma \in \Aut_*(\overline{Q})$ be an involution. If $\si$ is $\ov{Q}$-anti-symplectic, then $\RepQbar^\sigma \cong \Rep\neq\emptyset$ and this fixed locus is a Lagrangian subvariety of $\RepQbar$. If $\si$ is $\ov{Q}$-symplectic, then $\RepQbar^\si$ is a symplectic subvariety of $\RepQbar$. When the symplectic reduction $\mu^{-1}(\eta) /\!/_{\chi_\theta} \G$ is a smooth  algebraic variety, the fixed locus of the induced involution is therefore Lagrangian if $\si$ is $\ov{Q}$-anti-symplectic and symplectic if $\si$ is $\ov{Q}$-symplectic.
\end{prop}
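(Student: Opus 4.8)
The plan is to exploit the fact that $\RepQbar$ is a $k$-vector space on which $\sigma$ acts through the \emph{linear} involution $\Phi_\sigma$. Since $\mathrm{char}(k)\neq 2$, this gives an eigenspace decomposition $\RepQbar = \RepQbar^{+}\oplus\RepQbar^{-}$ into the $(\pm 1)$-eigenspaces, with $\RepQbar^\sigma = \RepQbar^{+}$, and every statement will be read off from how this splitting interacts with the constant form $\omega$ via the identity $\sigma^*\omega = s(\sigma)\,\omega$ of Proposition \ref{lemma sigma sympl}.

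First I would treat the anti-symplectic case. As $\sigma$ is an involution of $\overline{A}=A\sqcup A^*$ with $\sigma(A)\subseteq A^*$, a cardinality count forces $\sigma(A)=A^*$ and $\sigma(A^*)=A$, so $\sigma$ interchanges the two halves. I would then check that the projection $M\mapsto (M_a)_{a\in A}$ restricts to an isomorphism $\RepQbar^\sigma \overset{\simeq}{\lra}\Rep$: the fixed-point equations $M_{\sigma(c)}=M_c$ (covariant case) or ${}^tM_{\sigma(c)}=M_c$ (contravariant case) express each $A^*$-component uniquely in terms of a single $A$-component, the matrix sizes matching by $\sigma$-compatibility of $d$, and conversely every $(M_a)_{a\in A}$ extends uniquely to a fixed point. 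This yields $\RepQbar^\sigma\cong\Rep\neq\emptyset$ and $\dim\RepQbar^\sigma=\tfrac12\dim\RepQbar$. For isotropy, if $Y,Z\in\RepQbar^\sigma$ then $\omega(Y,Z)=(\sigma^*\omega)(Y,Z)=-\omega(Y,Z)$, hence $\omega(Y,Z)=0$; being isotropic of half dimension, $\RepQbar^\sigma$ is Lagrangian.

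Next, the symplectic case, where $s(\sigma)=1$ and $\sigma^*\omega=\omega$. For $Y\in\RepQbar^{+}$ and $Z\in\RepQbar^{-}$ one computes $\omega(Y,Z)=\omega(\Phi_\sigma Y,\Phi_\sigma Z)=\omega(Y,-Z)=-\omega(Y,Z)$, so $\omega(Y,Z)=0$ and $\RepQbar^{+}\perp\RepQbar^{-}$. Since $\omega$ is non-degenerate on $\RepQbar=\RepQbar^{+}\oplus\RepQbar^{-}$ with the two summands orthogonal, its restriction to each is non-degenerate; thus $\RepQbar^\sigma=\RepQbar^{+}$ is a symplectic subvariety. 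For the final claim about the reduction $X:=\mu^{-1}(\eta)/\!/_{\chi_\theta}\G$, I would transport these linear statements to tangent spaces. The involution of Corollary \ref{cor sigma on asr} satisfies $\sigma^*\omega_{\mathrm{red}}=s(\sigma)\,\omega_{\mathrm{red}}$, inherited from $\sigma^*\omega=s(\sigma)\,\omega$ together with the moment-map identity of Proposition \ref{lemma sigma sympl}, which is precisely what lets $\sigma$ descend. When $X$ is smooth, $X^\sigma$ is smooth by \cite[Proposition A.8.11]{CGP}, and at each fixed point $p$ one has $T_p(X^\sigma)=(T_pX)^{d_p\sigma}$; applying the linear algebra above to the symplectic vector space $(T_pX,\omega_{\mathrm{red},p})$ with the involution $d_p\sigma$ shows $T_p(X^\sigma)$ is Lagrangian in the anti-symplectic case and symplectic in the symplectic case. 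Since this holds at every $p$ and $X^\sigma$ is smooth, $X^\sigma$ is Lagrangian, respectively symplectic.

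The main obstacle, I expect, is the reduction step rather than the ambient linear algebra: one must justify both that the reduced form transforms by the scalar $s(\sigma)$ and that $T_p(X^\sigma)=(T_pX)^{d_p\sigma}$. The former is exactly where the two halves of Proposition \ref{lemma sigma sympl} combine (the form transformation and the moment-map equivariance), and the latter is the standard identification of the tangent space of a fixed locus of an order-prime-to-$\mathrm{char}(k)$ group action, valid here since $|\langle\sigma\rangle|=2$ and $\mathrm{char}(k)\neq 2$. Everything before that is routine eigenspace and isotropy bookkeeping once $\sigma^*\omega=s(\sigma)\,\omega$ is available.
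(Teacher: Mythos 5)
Your proposal is correct and takes essentially the same approach as the paper: the identification $\RepQbar^\sigma \cong \Rep$ (half-dimensional) in the $\ov{Q}$-anti-symplectic case, the $\pm 1$-eigenspace decomposition available since $\mathrm{char}(k)\neq 2$, the relation $\si^*\omega = s(\si)\,\omega$ from Proposition \ref{lemma sigma sympl}, and the transfer of this linear algebra to tangent spaces of the smooth reduction. The only difference is one of detail: you write out the isotropy/orthogonality computations and the descent of the form to $\mu^{-1}(\eta)/\!/_{\chi_\theta}\G$, which the paper dispatches in one line as ``general properties of anti-symplectic and symplectic involutions of a non-singular symplectic variety.''
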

\begin{proof}
If $\sigma$ is a $\ov{Q}$-anti-symplectic involution, no arrows are fixed by $\sigma$ and $\overline{A} = A \sqcup A^* = A \sqcup \sigma(A)$. Hence $\RepQbar^\sigma \cong \Rep$ (in particular, this has half the dimension of $\RepQbar$). The remaining statements follow from general properties of anti-symplectic and symplectic involutions of a non-singular symplectic variety $(X,\omega)$ over a field of characteristic $\neq2$: if $\si:X\lra X$ is of order $2$, the tangent space at $x$ is $T_x X = \ker(T_x\si - \mathrm{Id}) \oplus \ker(T_x\si + \mathrm{Id})$; these two subspaces are Lagrangian if $\si^*\omega=-\omega$ and each other's symplectic complement if $\si^*\omega=\omega$. Note that  one of these relations indeed holds here, in view of Proposition \ref{lemma sigma sympl}
\end{proof}

\subsection{The hyperk\"{a}hler case}

Over the complex numbers, the algebraic symplectic reduction has a hyperk\"{a}hler structure, as it can be interpreted as a hyperk\"{a}hler reduction via the Kempf--Ness theorem. 
In fact, we can generalise the above situation as follows.
Let $X = \AA^n_\CC$ be a complex affine space with a linear action of a complex reductive group $G$. 
We can assume without loss of generality that the maximal compact subgroup $U$ of $G$ acts unitarily 
(by rechoosing coordinates on $X$ if necessary). The standard Hermitian form 
$H : X \times X \lra \CC$ given by $(z,w) \lmt z^{t}\overline{w}$, 
where we consider $w$ and $z$ as row vectors, is then $U$-invariant. In particular, 
$X$ is a K\"{a}hler manifold with complex structure given by multiplication by $i \in \CC$, 
metric $g = \text{Re} H$ and symplectic form $\omega_{X} = -\text{Im} H$. 
The action of $U$ on $X$ is symplectic for $\omega_{X}$ with moment map $\mu_{X} : V \lra \fk^*$ 
given by
$ \mu_{\RR}(z) \cdot B = \frac{i}{2} H(Bz,z)$ 
where $z \in X$ and $B \in \fk^*$. The cotangent bundle is hyperk\"{a}hler, as we can identify $T^*X \cong  X \times X^* \cong \HH^n$ by $(z,\alpha) \lmt (z - \alpha j)$ and inherit the hyperk\"{a}hler structure from the quaternionic vector space $\HH^n$. Let $I$,$J$ and $K$ denote the complex structures on $T^*X$ obtained from the complex structures on $\HH^n$ given by left multiplication by $i,j,k$. We consider the associated symplectic forms 
\[ \omega_I(-,-) = g(I-,-), \quad \omega_J(-,-) = g(J-,-) \quad \text{and} \quad \omega_K(-,-) = g(K-,-).\] 
We often write $\omega_{\RR} = \omega_I$ and 
$\omega_{\CC} = \omega_J + i \omega_K$, which is the Liouville algebraic symplectic form. This linear $G$-action on $X$ lifts to a 
linear action of $G$ (and $U$) on $T^*X$; moreover, as $U$ acts unitarily on $X$, 
the $U$-action is symplectic with respect to $\omega_{\RR}$ and the 
$G$-action is symplectic with respect to $\omega_{\CC}$. The associated moment maps 
$\mu_{\RR} : T^*X \lra \fk^*$ and $\mu_{\CC} : T^*X \lra \fg^*$ are given by
$ \mu_{\RR}(z,\alpha) \cdot B = (\mu_{X}(z) - \mu_{X}(\alpha)) \cdot B$
and
$\mu_{\CC}(z,\alpha) \cdot A = \alpha(A^{\#}_z)$, 
where $A \in \fg$ and $(z,\alpha) \in T^*X$ and $A^{\#}_z$ denotes the infinitesimal 
action of $A$ on $z$. We often write $\mu_{HK} := (\mu_\RR,\mu_{\CC})$, which is a 
hyperk\"{a}hler moment map for the $U$-action on $T^*X$. Let $\chi \in \fk$ and $\eta \in \fg$ be coadjoint fixed, then $U$ acts on the level set
$\mu^{-1}_{HK}(\chi,\eta) := \mu_{\RR}^{-1}(\chi) \cap \mu_{\CC}^{-1}(\eta)$ 
and the  hyperk\"{a}hler reduction is the topological quotient $\mu^{-1}_{HK}(\chi,\eta)/U$, 
which inherits an orbifold hyperk\"{a}hler structure if $U$ acts with finite stabilisers on $\mu^{-1}_{HK}(\chi,\eta)$. By the Kempf-Ness theorem \cite{kempf_ness}, there is a homeomorphism between the hyperk\"{a}hler reduction and algebraic symplectic reduction:
$\mu^{-1}_{HK}(\chi,\eta)/U \cong \mu_{\CC}^{-1}(\eta)/\!/_{\chi} G$, 
where we consider $\chi$ as a character of $G$ by complexifying and exponentiating.

For a quiver $Q$, we can apply the above picture to $G = \G$ acting on 
$X = \Rep$. Then $U = \mathbf{U}_{Q,d} := \Pi_{v \in V} \mathbf{U}(d_v)$ and we take the Hermitian 
form on $\Rep$ given by 
$H(M,N) = \sum_{a \in A} \tr (M_a \:^{t}\overline{N}_a)$. 
The hyperk\"{a}hler metric $g$ on $\RepQbar$ is given by
\begin{equation}\label{metric} g(X,Y) = \text{Re} \Big( \sum_{a \in \overline{A}} \tr (X_a \: ^{t}\overline{Y}_a)  \Big);\end{equation}
therefore, $\omega_\RR= \omega_I$ is given by
$\omega_{\RR}(X,Y) =  \text{Im} \left( \sum_{a \in \overline{A}}  \tr (^{t}\overline{X}_a Y_a)  \right)$
and $\omega_{\CC}  = \omega_J + i \omega_K$ is the Liouville algebraic symplectic form $\omega$ described in \eqref{Liouville_form}. Moreover, $\mu_{\CC} = \mu : \RepQbar \lra \LieG^*$ is the algebraic moment map 
given by \eqref{eqn alg mmap quiver} and $\mu_{\RR} : \RepQbar \lra \fu_{Q,d}^*$ 
can be explicitly described as
\[ \mu_{\RR}(M) \cdot B = \frac{i}{2} \sum_{a \in \overline{A}} \tr (B_{h(a)}M_a \:^{t}\overline{M}_a - B_{t(a)} \:^{t}\overline{M}_{a} {M}_{a}). \]
If we use the standard identification $\fu_{Q,d} \cong \fu_{Q,d}^*$, then we can consider 
the real moment map as a map $\mu_{\RR} : \RepQbar \lra \fu_{Q,d}$ given by  
$\mu_{\RR}(M) = \frac{i}{2}\sum_{a \in \overline{A}} [M_a, ^{t}\overline{M}_a]$. 
If $\chi_\theta$-semistability coincides with $\chi_\theta$-stability on $\mu^{-1}(\eta)$, 
then we obtain an orbifold hyperk\"{a}hler structure on the algebraic variety
$\mu^{-1}(\eta)/\!/_{\chi_\theta} \G$.

We note that Nakajima quiver varieties can also be constructed in this manner, as a framed quiver variety can be realised as a unframed quiver variety for a different quiver (see \cite[p. 261]{CBmoment}). The effect of complex conjugation on Nakajima quiver varieties is studied in \cite{fjm}, 
where they show the fixed locus in the Nakajima quiver variety is an $ABA$-brane; see Corollary 3.10 in \textit{loc.\ cit}. We will study the geometry of the fixed locus of an automorphism $\sigma \in \Aut_*(\overline{Q})$ that is either $\ov{Q}$-symplectic or $\ov{Q}$-anti-symplectic. 

\begin{lemma}\label{prop comp involn on quiver with HK}
Let $\sigma \in \Aut_*(\overline{Q})$ and let $d$ be a $\sigma$-compatible dimension vector. Then the automorphism $\sigma$ of $\RepQbar$ has the following properties.
\begin{enumerate}
\item $\sigma$ is holomorphic with respect to $I$ and symplectic with respect to $\omega_I$.
\item If $\sigma$ is $\ov{Q}$-symplectic, then $\sigma$ is holomorphic with respect to $J$ and $K$ and symplectic with respect to $\omega_J$ and $\omega_K$. 
\item If $\sigma$ is $\ov{Q}$-anti-symplectic, then $\sigma$ is anti-holomorphic with respect to $J$ and $K$ and anti-symplectic with respect to $\omega_J$ and $\omega_K$.
\end{enumerate}
\end{lemma}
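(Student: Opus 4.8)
The plan is to treat all nine assertions as statements about the single $\RR$-linear map $\sigma$ acting on the quaternionic vector space $\RepQbar \cong \HH^n$; since $\sigma$ is linear its derivative at every point is $\sigma$ itself, so it suffices to work with $\sigma$ and the constant tensors $g, I, J, K, \omega_I, \omega_J, \omega_K$. I would reduce everything to three facts: (a) $\sigma$ is $\CC$-linear for $I$, i.e.\ $\sigma I = I\sigma$; (b) $\sigma$ is a $g$-isometry; and (c) $\sigma^*\omega_\CC = s(\sigma)\,\omega_\CC$, which is exactly Proposition \ref{lemma sigma sympl} because $\omega_\CC$ is the Liouville form $\omega$. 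Fact (a) is immediate: whether $\sigma$ is covariant (permuting the components $M_a$) or contravariant (permuting and transposing them), the induced map on $\RepQbar$ is $\CC$-linear, since transposition of a complex matrix is $\CC$-linear rather than conjugate-linear. Thus $\sigma$ commutes with multiplication by $i = I$, which is precisely holomorphicity with respect to $I$, giving the first half of (1).

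Next I would establish fact (b) by a direct computation from \eqref{metric}. In the covariant case $(\sigma X)_a = X_{\sigma(a)}$, so $g(\sigma X,\sigma Y) = \mathrm{Re}\sum_{a\in\ov{A}}\tr(X_{\sigma(a)}\,{}^t\ov{Y}_{\sigma(a)})$ equals $g(X,Y)$ after reindexing by the bijection $a\mapsto\sigma(a)$. In the contravariant case $(\sigma X)_a = {}^tX_{\sigma(a)}$, and using $\ov{{}^tY_{\sigma(a)}} = {}^t\ov{Y}_{\sigma(a)}$ together with the identity $\tr({}^tA\,B) = \tr(A\,{}^tB)$ (both equal $\sum_{i,j}A_{ij}B_{ij}$), each summand again reduces to $\tr(X_{\sigma(a)}\,{}^t\ov{Y}_{\sigma(a)})$, and reindexing gives $g(\sigma X,\sigma Y)=g(X,Y)$. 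Combining (a) and (b) yields the second half of (1): since $\omega_I(-,-)=g(I-,-)$, we get $\omega_I(\sigma X,\sigma Y)=g(I\sigma X,\sigma Y)=g(\sigma I X,\sigma Y)=g(IX,Y)=\omega_I(X,Y)$.

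For parts (2) and (3), I would first note that $\omega_J$ and $\omega_K$ are real, so taking real and imaginary parts of fact (c) (legitimate because $\sigma$ is a real map) gives $\sigma^*\omega_J = s(\sigma)\,\omega_J$ and $\sigma^*\omega_K = s(\sigma)\,\omega_K$. With $s(\sigma)=+1$ for $\ov{Q}$-symplectic $\sigma$ and $s(\sigma)=-1$ for $\ov{Q}$-anti-symplectic $\sigma$, this is exactly the symplectic/anti-symplectic halves of (2) and (3). To obtain the holomorphicity statements I would feed this back through the metric: since $\omega_J(-,-)=g(J-,-)$, the relation $\sigma^*\omega_J=s(\sigma)\omega_J$ reads $g(J\sigma X,\sigma Y)=s(\sigma)g(JX,Y)=s(\sigma)g(\sigma J X,\sigma Y)$ by (b); as $g$ is non-degenerate and $\sigma$ is bijective, this forces $J\sigma = s(\sigma)\,\sigma J$. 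Hence $\sigma$ commutes with $J$ when $s(\sigma)=1$ (holomorphic) and anti-commutes when $s(\sigma)=-1$ (anti-holomorphic). Finally, since $K=IJ$ and $\sigma I = I\sigma$, the computation $\sigma K = \sigma IJ = I\sigma J = s(\sigma)\,IJ\sigma = s(\sigma)\,K\sigma$ transfers the same dichotomy to $K$, completing (2) and (3).

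The only genuinely computational point is the contravariant case of fact (b), where one must track the transpose carefully and use $\tr({}^tA\,B)=\tr(A\,{}^tB)$; here $\sigma$-compatibility of $d$ is what guarantees the matrix sizes match so that the reindexing $a\mapsto\sigma(a)$ is meaningful. The \emph{conceptually} delicate step is the (anti-)holomorphicity with respect to $J$ and $K$: it does not come from a coordinate computation but from combining the isometry property with Proposition \ref{lemma sigma sympl} and invoking non-degeneracy of $g$. The correctness therefore hinges on having both (b) and (c) in hand \emph{before} attempting the $J$- and $K$-statements, rather than trying to verify holomorphicity directly.
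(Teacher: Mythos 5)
Your proof is correct, and for most of the nine assertions it follows the same skeleton as the paper's: $I$-holomorphicity from the $\CC$-linearity of permutation and transposition, $\sigma^*\omega_I=\omega_I$ from combining this with the fact that $\sigma$ preserves the metric \eqref{metric}, the $\omega_J$- and $\omega_K$-statements by taking real and imaginary parts in Proposition \ref{lemma sigma sympl} (using that $\omega_{\CC}=\omega_J+i\omega_K$ is the Liouville form), and the passage from $J$ to $K$ via $K=IJ$ and $\sigma I=I\sigma$. The one genuinely different step is the (anti-)holomorphicity with respect to $J$: the paper proves $\sigma J=s(\si)J\sigma$ by a direct quaternionic coordinate computation, writing $J\cdot(M_a,M_{a^*})=({}^t\ov{M}_{a^*},-{}^t\ov{M}_a)$ and comparing $\sigma(J\cdot M)$ with $J\cdot\sigma(M)$ componentwise, whereas you deduce it formally from the two facts already in hand: $g(J\sigma X,\sigma Y)=s(\si)\,g(JX,Y)=s(\si)\,g(\sigma JX,\sigma Y)$, after which non-degeneracy of $g$ and bijectivity of $\sigma$ force $J\sigma=s(\si)\,\sigma J$. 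Your argument is sound and non-circular, since Proposition \ref{lemma sigma sympl} is proved independently of this lemma. What your route buys is immunity to quaternionic sign bookkeeping --- not a negligible point, since the paper's own two displayed formulas for $\sigma(J\cdot(M_a,M_{a^*}))$ and $J\cdot\sigma(M_a,M_{a^*})$ in the contravariant case are mutually inconsistent in sign (one of them contains a typo; the lemma's conclusion is nevertheless the correct one, as your derivation independently confirms). What the paper's route buys is self-containedness at that step: it needs neither non-degeneracy of $g$ nor the prior symplectic identities, and it exhibits the (anti-)commutation relation explicitly on points. A further small merit of your write-up is that you actually verify the isometry property in the contravariant case, via $\tr({}^tA\,B)=\tr(A\,{}^tB)$ and the $\sigma$-compatibility of $d$, which the paper only asserts with a pointer to \eqref{metric}.
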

\begin{proof}
As complex conjugation and transposition commute, $\sigma$ is $I$-holomorphic. Since the hyperk\"{a}hler metric $g$ is preserved by $\sigma$ (\textit{cf.}\ the explicit form of the metric given in \eqref{metric}), it follows that $\sigma^*\omega_I = \omega_I$. If $M=(M_a,M_{a^*})_{a \in A} \in \RepQbar$, then $J \cdot (M_a,M_{a^*})_{a \in A} = (^{t}\overline{M}_{a^*}, - ^{t}\overline{M}_a)_{a \in A}$. Suppose that $\sigma$ is contravariant; then
\[ \sigma ( J \cdot (M_a,M_{a^*}) )= \sigma (^{t}\overline{M}_{a^*}, - ^{t}\overline{M}_a) = s(\si)
(\overline{M}_{\sigma(a^*)}, - \overline{M}_{\sigma(a)})\] (where $s(\si)=\pm 1$ depending on whether $\si$ is $\ov{Q}$-symplectic or $\ov{Q}$-anti-symplectic) and $J \cdot \sigma( M_a, M_{a^*}) = J \cdot (^{t}M_{\sigma(a)},^{t}M_{\sigma(a^*)}) = (- \overline{M}_{\sigma(a^*)}, \overline{M}_{\sigma(a)})$, which gives the compatibility of $J$ and $\sigma$. Since $K = IJ$, we can determine the compatibility of $K$ with $\sigma$ from that of $I$ and $J$. The final statements about the compatibility of $\sigma$ with $\omega_J$ and $\omega_K$ follow from Proposition \ref{lemma sigma sympl}. A very similar computation shows the result also holds when $\sigma$ is covariant.
\end{proof}

\begin{ass}\label{Ass needed for main brane thms}
Given a subgroup $\Si\subset \Aut_*(\ov{Q})$, we shall assume that the dimension vector $d$ and the stability parameter $\theta$ are $\Si$-compatible. Let $\eta \in \LieG$ be a coadjoint fixed element such that $\sigma (\eta) = s(\si) \eta$ for all $\si \in \Si$. Then there is an induced $\Sigma$-action on $\mu^{-1}(\eta) /\!/_{\chi_\theta} \G$ by Corollary \ref{cor sigma on asr}. We assume that the $\theta$-semistable locus in $\mu^{-1}(\eta)$ is non-empty and that $\Gbar$ acts freely on this $\theta$-semistable locus, so that the quotient $\mu^{-1}(\eta)/\!/_{\chi_\theta} \G$ is smooth and has a natural hyperk\"{a}hler structure.
\end{ass}

\begin{thm} \label{thm BBB brane}
Under Assumption \ref{Ass needed for main brane thms} for a subgroup $\Sigma \subset \Aut_*(\overline{Q})$ consisting of $\ov{Q}$-symplectic transformations, the $\Sigma$-fixed locus in $\mu^{-1}(\eta)/\!/_{\chi_\theta} \G$ is hyperholomorphic (or, in the language of branes, this fixed locus is a BBB-brane).
\end{thm}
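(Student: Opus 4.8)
The plan is to show that $\Sigma$ acts on the hyperk\"ahler reduction $\cN := \mu^{-1}(\eta)/\!/_{\chi_\theta}\G$ by tri-holomorphic isometries, and then to invoke the general principle that the fixed locus of a group of holomorphic isometries of a K\"ahler manifold is a complex submanifold for that K\"ahler structure. Applying this principle simultaneously to the three K\"ahler structures $(g,I)$, $(g,J)$ and $(g,K)$ identifies the single real submanifold $\cN^{\Sigma}$ as a complex submanifold for each of $I$, $J$ and $K$, which is exactly the assertion that it is hyperholomorphic, i.e.\ a $BBB$-brane.

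First I would record, using Lemma \ref{prop comp involn on quiver with HK}, that each $\sigma \in \Sigma$ acts on $\RepQbar$ holomorphically with respect to all three complex structures $I$, $J$ and $K$: part (1) gives holomorphy for $I$, and part (2) gives holomorphy for $J$ and $K$, since $\Sigma$ consists of $\ov{Q}$-symplectic transformations and hence $s(\sigma)=1$. The metric $g$ is preserved by $\sigma$, as is evident from its explicit form \eqref{metric}. Thus $\Sigma$ acts on the flat hyperk\"ahler space $\RepQbar$ by tri-holomorphic isometries, and this action is compatible with the $\G$-action (and its maximal compact subgroup $\mathbf{U}_{Q,d}$) in the sense of \eqref{mixed inv action comp}. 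By Assumption \ref{Ass needed for main brane thms}, the induced $\Sigma$-action on $\cN$ already exists through Corollary \ref{cor sigma on asr} together with the condition $\sigma(\eta)=s(\sigma)\eta=\eta$. I would then verify that this descended action preserves the inherited hyperk\"ahler data: the three complex structures and the metric on $\cN$ are induced from $\RepQbar$ through the reduction, and since $\Sigma$ preserves them upstairs while commuting with the $\mathbf{U}_{Q,d}$-action used to form the quotient, they are preserved downstairs as well.

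With the tri-holomorphic isometric action on $\cN$ in hand, the final step is a pointwise tangent-space analysis. At a $\Sigma$-fixed point $p \in \cN^{\Sigma}$, the tangent space $T_p \cN^{\Sigma}$ is the $\Sigma$-invariant subspace of $T_p\cN$. Because each differential $d_p\sigma$ is complex-linear with respect to $I$ (holomorphy), this invariant subspace is $I$-stable, and the same reasoning with $J$ and $K$ shows it is $J$- and $K$-stable. Hence $T_p\cN^{\Sigma}$ is a quaternionic subspace of $T_p\cN$, so $\cN^{\Sigma}$ is a complex submanifold for each of $I$, $J$ and $K$ at once; that is, it is hyperholomorphic. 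Smoothness of $\cN^{\Sigma}$ follows from \cite[Proposition A.8.11]{CGP} (or by linearising the finite $\Sigma$-action near a fixed point), exactly as in Remark \ref{rmks on decomp thm}(2).

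The hard part will be the bookkeeping in the descent step: confirming that the complex structures and metric inherited by $\cN$ are genuinely $\Sigma$-invariant on the quotient, and not merely on the hyperk\"ahler level set $\mu_{HK}^{-1}(\chi,\eta)$. This requires that $\Sigma$ preserves both moment-map level sets (which follows from the equivariance in Proposition \ref{lemma sigma sympl}, together with $s(\sigma)=1$ and $\sigma(\eta)=\eta$) and that it commutes suitably with $\mathbf{U}_{Q,d}$, so that the horizontal distribution defining the induced complex structures is $\Sigma$-stable. I would emphasise that, unlike the decomposition results of $\S$\ref{quiver_aut_section}, there is no need here to describe $\cN^{\Sigma}$ explicitly: hyperholomorphicity is an infinitesimal property of the fixed locus and follows purely from the tri-holomorphic isometry property, however intricate the global geometry of $\cN^{\Sigma}$ may be.
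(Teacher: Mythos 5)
Your proposal is correct and follows essentially the same route as the paper's proof: invoke Lemma \ref{prop comp involn on quiver with HK} to get tri-holomorphy of each $\sigma$ on $\RepQbar$, descend this to the induced automorphism of the reduction, and conclude that the fixed locus is holomorphic for $I$, $J$ and $K$. The paper compresses the descent and the final tangent-space step into a single sentence each; your version merely makes these (and the smoothness of the fixed locus) explicit.
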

\begin{proof}
If $\sigma$ is $\ov{Q}$-symplectic, then the automorphism $\sigma$ on $\RepQbar$ is holomorphic with respect to all three complex structures by Lemma \ref{prop comp involn on quiver with HK}, so the same is true for the induced automorphism on the hyperk\"{a}hler reduction with respect to its induced complex structures. Therefore, the fixed locus is a holomorphic submanifold with respect to all three complex structures.
\end{proof}

\begin{thm}\label{thm BAA brane}
Under Assumption \ref{Ass needed for main brane thms} for a $\ov{Q}$-anti-symplectic involution $\sigma$, the fixed locus of $\sigma$ acting on $\mu^{-1}(\eta)/\!/_{\chi_\theta} \G$ is a $BAA$-brane.
\end{thm}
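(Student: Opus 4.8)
The plan is to reduce the statement to the pointwise eigenspace decomposition of the tangent space at a fixed point, treating the three complex structures simultaneously; this is the hyperk\"{a}hler refinement of the argument already used in the proof of Proposition \ref{Lagr fixed locus for a-sym invol}. Write $\cN := \mu^{-1}(\eta)/\!/_{\chi_\theta}\G$ for the symplectic reduction, which is smooth with its hyperk\"{a}hler structure $(g,I,J,K,\omega_I,\omega_J,\omega_K)$ under Assumption \ref{Ass needed for main brane thms}. The first step is to transfer the compatibilities of Lemma \ref{prop comp involn on quiver with HK} from $\RepQbar$ to $\cN$: since $\sigma$ is $\G$-equivariant and preserves $\mu^{-1}(\eta)$ (Corollary \ref{cor sigma on asr}), it descends through the Kempf--Ness homeomorphism to an involution of $\cN$, and because the hyperk\"{a}hler structure of $\cN$ is the one induced from the flat structure on $\RepQbar$, the induced involution is again holomorphic and symplectic for $(I,\omega_I)$ and anti-holomorphic and anti-symplectic for $(J,\omega_J)$ and $(K,\omega_K)$.

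Next I would run the eigenspace argument. Fix $x \in \cN^\sigma$. Since $\sigma$ is a holomorphic involution for $I$, linearising the $\ZZ/2\ZZ$-action near $x$ shows that $\cN^\sigma$ is a smooth complex submanifold for $I$ (this already gives type $B$), with $T_x\cN^\sigma = W^+ := \ker(T_x\sigma - \mathrm{Id})$ and $T_x\cN = W^+ \oplus W^-$ where $W^- := \ker(T_x\sigma + \mathrm{Id})$. Since $\sigma^*\omega_J = -\omega_J$, the computation in the proof of Proposition \ref{Lagr fixed locus for a-sym invol} shows that $W^+$ is Lagrangian for $\omega_J$, and the identical computation for $\omega_K$ gives type $A$ in both cases; hence $\cN^\sigma$ is a $BAA$-brane. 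The three conclusions are dimensionally consistent: as $T_x\sigma$ anti-commutes with $J$, the map $J$ carries $W^+$ isomorphically onto $W^-$, forcing $\dim W^+ = \tfrac12 \dim T_x\cN$, exactly the half-dimension required of a Lagrangian.

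The only non-formal point is the descent step of the first paragraph, namely verifying that the triple of complex structures and K\"{a}hler forms on $\cN$ really are those inherited from $\RepQbar$ and that the induced $\sigma$ respects them as stated; this is routine once one recalls that reduction preserves the flat hyperk\"{a}hler data and that $\sigma$ preserves the level set and commutes with the $\G$- (and $\mathbf{U}_{Q,d}$-) action, so that the eigenspace computation is immediate from Lemma \ref{prop comp involn on quiver with HK}. I therefore do not expect a serious obstacle; if one wants the brane to be non-trivial, one should additionally note, as in Proposition \ref{Lagr fixed locus for a-sym invol}, that a $\ov{Q}$-anti-symplectic involution has non-empty fixed locus already on $\RepQbar$, and track non-emptiness through the reduction.
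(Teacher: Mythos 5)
Your proof is correct and follows essentially the same route as the paper: the paper's proof of Theorem \ref{thm BAA brane} simply cites Lemma \ref{prop comp involn on quiver with HK} together with the construction of the three symplectic structures on the hyperk\"ahler quotient, and your write-up merely makes explicit the two ingredients left implicit there, namely the descent of $\sigma$ and its compatibilities to $\mu^{-1}(\eta)/\!/_{\chi_\theta}\G$, and the eigenspace decomposition $T_x\cN = \ker(T_x\sigma - \mathrm{Id}) \oplus \ker(T_x\sigma + \mathrm{Id})$ already used in the proof of Proposition \ref{Lagr fixed locus for a-sym invol}.
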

\begin{proof}
This is a direct consequence of Lemma \ref{prop comp involn on quiver with HK} and the construction of the three symplectic structures on the hyperk\"{a}hler quotient  $\mu_{HK}^{-1}(\chi_\theta,\eta) / \mathbf{U}_{Q,d}.$
\end{proof}

If we apply Theorem \ref{thm BAA brane} to the anti-symplectic contravariant involution $\sigma$ that fixes all vertices and on arrows $a$ is given by $\sigma(a) = a^*$, then the $\sigma$-fixed locus $(\mu^{-1}(0) /\!/_{\chi_\theta} \G)^\sigma$ has as a connected component the subvariety
\[\RepQbar^\sigma /\!/_{\chi_\theta} \G^\sigma = \Rep /\!/_{\chi_\theta} \G = \Mod,\] 
which is known to be Lagrangian in $\mu^{-1}(0) /\!/_{\chi_\theta} \G$ (cf.\  \cite[Proposition 2.4]{proudfoot}).

Let $\tau \in \Gal_{\RR}$ denote complex conjugation; then we can consider compositions $\gamma := \tau \circ \sigma$, where $\sigma \in \Aut_*(\overline{Q})$ is a $\ov{Q}$-(anti)-symplectic automorphism and describe the geometry of the associated fixed loci in the language of branes. We focus on the case where $\si$ is also involutive, as this is our main source of applications.

\begin{cor}\label{cor ABA and AAB branes}
If Assumption \ref{Ass needed for main brane thms} holds for an involution $\sigma$ which commutes with $\tau$, then, when non-empty, the fixed locus of the involution $\sigma \circ \tau$ acting on the hyperk\"{a}hler manifold $\mu^{-1}(\eta)/\!/_{\chi_\theta}\G$ is
\begin{enumerate}
\item an $ABA$-brane, if $\sigma$ is $\ov{Q}$-symplectic (here we allow $\si=\mathrm{Id}$);
\item an $AAB$-brane, if $\sigma$ is $\ov{Q}$-anti-symplectic.
\end{enumerate}
\end{cor}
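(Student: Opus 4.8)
The plan is to compute the effect of the involution $\gamma := \sigma\circ\tau$ on each of the three Kähler structures $(g,I)$, $(g,J)$, $(g,K)$ of $\RepQbar$, and then to read off the brane type of its fixed locus from the same dichotomy already exploited in Proposition \ref{Lagr fixed locus for a-sym invol}: the fixed locus of an isometric involution is totally geodesic, and for a given Kähler structure it is a complex (type $B$) submanifold when the involution is holomorphic for that structure, and a Lagrangian (type $A$) submanifold when the involution is anti-holomorphic, equivalently when it is an isometry satisfying $\gamma^*\omega = -\omega$. Since $\sigma$ and $\tau$ commute and each is an involution, $\gamma^2 = \sigma^2\tau^2 = \mathrm{Id}$, so $\gamma$ is again an involution.

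First I would record the behaviour of complex conjugation $\tau$ on its own. It acts on $\RepQbar$ by $(M_a)_{a\in\ov{A}}\mapsto(\overline{M_a})_{a\in\ov{A}}$, and is an isometry of the metric \eqref{metric} because the real part of a Hermitian form is conjugation-invariant. It anti-commutes with $I$ (scalar multiplication by $i$), while a short computation using the explicit expression for $J$ from the proof of Lemma \ref{prop comp involn on quiver with HK} gives $\tau\circ J = J\circ\tau$; as $K = IJ$, it follows that $\tau$ anti-commutes with $K$. Being an isometry, $\tau$ is therefore anti-symplectic for $\omega_I$ and $\omega_K$ and symplectic for $\omega_J$, i.e. $\tau^*\omega_I = -\omega_I$, $\tau^*\omega_J = \omega_J$ and $\tau^*\omega_K = -\omega_K$; in particular $\tau$ alone cuts out an $ABA$-brane, recovering the result of \cite{fjm}.

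Next I would combine this with Lemma \ref{prop comp involn on quiver with HK}, which records that $\sigma$ is always $I$-holomorphic and $\omega_I$-symplectic, and is holomorphic and symplectic (resp. anti-holomorphic and anti-symplectic) for $\omega_J$ and $\omega_K$ according to whether $\sigma$ is $\ov{Q}$-symplectic (resp. $\ov{Q}$-anti-symplectic). Composing the two sign patterns yields, for $\gamma = \sigma\circ\tau$: in case (1) $\gamma$ is anti-holomorphic for $I$, holomorphic for $J$, anti-holomorphic for $K$, so $\gamma^*\omega_I = -\omega_I$, $\gamma^*\omega_J = \omega_J$, $\gamma^*\omega_K = -\omega_K$, an $ABA$-brane; in case (2) $\gamma$ is anti-holomorphic for $I$, anti-holomorphic for $J$, holomorphic for $K$, so $\gamma^*\omega_I = -\omega_I$, $\gamma^*\omega_J = -\omega_J$, $\gamma^*\omega_K = \omega_K$, an $AAB$-brane. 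The stated brane types then follow from the dichotomy above.

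The one point requiring genuine care — and the main potential obstacle — is to verify that $\gamma$ actually descends to an involution of the smooth hyperkähler quotient $\mu^{-1}(\eta)/\!/_{\chi_\theta}\G$, rather than merely acting on $\RepQbar$. For $\sigma$ this is Corollary \ref{cor sigma on asr}; for $\tau$ one checks that $\mu_\CC\circ\tau = \overline{\mu_\CC}$ and that $\tau$ intertwines the $\G$- and $U$-actions with their complex conjugates, so that, under the hypotheses of Assumption \ref{Ass needed for main brane thms} on $\eta$ together with $\sigma(\eta)=s(\sigma)\eta$, the map $\tau$ preserves the relevant level set. Because $\sigma$ and $\tau$ commute and both descend, so does $\gamma$, and its fixed locus — when non-empty — inherits the claimed $ABA$ or $AAB$ structure from the hyperkähler quotient construction exactly as in the proofs of Theorems \ref{thm BBB brane} and \ref{thm BAA brane}.
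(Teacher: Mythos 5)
Your proof is correct and follows exactly the route the paper intends for this corollary: compute the holomorphic/anti-holomorphic behaviour of $\tau$ with respect to $I$, $J$, $K$ (giving the $ABA$ pattern of \cite{fjm}), compose the signs with those of $\sigma$ from Lemma \ref{prop comp involn on quiver with HK}, and descend to the quotient as in Theorems \ref{thm BBB brane} and \ref{thm BAA brane}. Your explicit verification that $\tau$ commutes with $J$ and anti-commutes with $I$ and $K$, and your attention to why $\sigma\circ\tau$ preserves the level set $\mu^{-1}(\eta)$, supply precisely the details the paper leaves implicit.
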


In particular, we see that all four types of branes ($BBB$, $BAA$, $ABA$ and $AAB$) can be constructed as the fixed locus of an involution. We also note that since $\tau$ and all $\ov{Q}$-(anti)-symplectic transformations of $\ov{Q}$ induce isometries of $\RepQbar$ by Lemma \ref{prop comp involn on quiver with HK}, the fixed loci of the various involutions we have considered are also totally geodesic submanifolds of the hyperk\"{a}hler quotient  $\mu^{-1}(\eta)/\!/_{\chi_\theta}\G$.

\section{Further applications and examples}\label{sec apps and exs}

In this section, we calculate some fixed loci for quiver group actions on the Hilbert scheme $\Hilb^n(\AA^2)$ and polygon spaces, which can both be realised as quiver moduli spaces.

\subsection{Hilbert scheme of points in the plane}

The Hilbert scheme $\Hilb^n(\AA^2)$ of $n$ points in the affine plane over an algebraically closed field $k$ can be realised as a Nakajima quiver variety associated to the Jordan quiver. More precisely, let $\ov{Q}$ be the double of the framed Jordan quiver $Q$, i.e.\ $\ov{Q}$ is the following quiver:
$$\xymatrix{0 \bullet \ar@(dl,dr)_{y} \ar@(ul,ur)^{x} \ar@/_/[r]_{j} & \bullet \infty \ar@/_/[l]_{i} }$$
where the vertex at infinity is the framing vertex. For $d = (n,1)$, we have
\[\RepQbar = \Mat_{n \times n} \times \Mat_{n \times n} \times \Mat_{n \times 1} \times \Mat_{1 \times n}\]
and for the action of $\GL_n \subset \G$ (that is, one ignores the group $\GG_m$ corresponding to the framing vertex), we have a moment map $\mu : \RepQbar \lra \fg \fl_n $ given by
\[ \mu(M_x,M_y,M_i,M_j) = [M_x,M_y] + M_i \otimes M_j,\] where $M_i$ is a column vector and $M_j$ a row vector (viewed as a linear form). The Hilbert scheme is a $\GL_n$-quotient of the level set of the moment map at zero (other level sets give rise to Calogero-Moser spaces). By \cite[Section 5.6]{ginzburg}, the affine GIT quotient of the $\GL_n$-action on $\mu^{-1}(0)$ with respect to the trivial stability parameter is isomorphic to the $n$-th symmetric power of $\AA^2$, i.e.\ $\mu^{-1}(0) /\!/ \GL_n \cong \sym^n(\AA^2)$, and the GIT quotient with respect to the character $\det : \GL_n \lra \GG_m$ (that corresponds to the stability parameter $\theta_0=-1$) is the Hilbert scheme of $n$-points on the affine plane, i.e.\ $\mu^{-1}(0) /\!/_{\det} \GL_n \cong \Hilb^n(\AA^2)$, which is a geometric quotient of $\mu^{-1}(0)^{\det-ss}=\mu^{-1}(0)^{\det-s}$, and moreover, the natural morphism from the former to the latter is the Hilbert-Chow morphism, which is an algebraic symplectic resolution of singularities. More precisely, $M := (M_x,M_y,M_i,M_j) \in \mu^{-1}(0)$ is GIT stable for the character $\det$ if $[M_x,M_y] = 0$, and $M_j = 0$, and $M_i$ is a cyclic vector for $M_x$ and $M_y$. In this case, the corresponding point in the Hilbert scheme is given by the ideal $J_M := \{ f(x,y) \in k[x,y]: f(M_x,M_y)M_i = 0 \} \subset k[x,y]$, which has codimension $n$, as $M_i$ is cyclic. We note that the ideal $J_M$ is constant on the $\GL_n$-orbit of $M$. 

Note that the present setting is slightly different from the one in Section \ref{branes_section}, insofar as one does not consider the action of the full $\G=\GL_n \times \GG_m$ on $\RepQbar$, but only that of the subgroup $\GL_n\subset \G$ (the automorphism group of the unframed quiver) and consequently, there is no global stabiliser group $\Delta$. In particular, $\GL_n$ acts freely on $\mu^{-1}(0)^{\det-s}$. For our results, this simply means that one should replace $\Delta$ with the trivial group and $\G$ with $\GL_n$.

Let us study the automorphism group of $\ov{Q}$. For reasons of valency, every automorphism of $\ov{Q}$ must fix each vertex. We have that $\Aut(\ov{Q}) = \ZZ/2\ZZ \times \ZZ/2\ZZ$, where the non-trivial automorphisms are $\si:x\lmt y$ leaving $i$ and $j$ fixed, then $\si':i\lmt j$ leaving $x$ and $y$ fixed and finally $\si\circ\si'$,
where $\sigma$ is covariant, and $\sigma'$, $\sigma \circ \sigma'$ are contravariant. The dimension vector $d = (n,1)$ is $\Aut(\ov{Q})$-compatible and every stability parameter is compatible with the covariant automorphisms, but only the trivial notion of stability is compatible with the contravariant involutions (so there is an induced action of the contravariant involutions on $\sym^n(\AA^2)$, but not on $\Hilb^n(\AA^2)$). So let us focus on the covariant involution $\sigma$; then 
$\RepQbar^\sigma = \{ M = (M_x,M_y, M_i,M_j) : M_x = M_y \}$.
Suppose that $M \in \mu^{-1}(0)^{\det-s}$ is $\sigma$-fixed; then $M = (M_x,M_x,M_i,0)$ where $M_i$ is a cyclic vector for $M_x$. In this case, the corresponding ideal $J_M \subset k[x,y]$ contains the ideal $I = (x-y)$, and so we have closed embeddings
$\spec k[x,y]/J_M \hookrightarrow \spec k[x,y]/I \hookrightarrow \spec k[x,y] = \AA^2$,
where the final morphism can be viewed as the diagonal embedding $D : \AA^1 \lra \AA^2$. Hence, the $n$ points corresponding to the ideal $J_M$ all lie on the diagonal line $D(\AA^1) \subset \AA^2$. This determines a map $(\mu^{-1}(0)^{\det-s})^\si\lra \Hilb^n(D(\AA^1)) \cong \sym^n(\AA^1) \cong \AA^n$.
As $\sigma$ acts trivially on all vertices, we have that $\GL_n^\sigma = \GL_n$. 

\begin{lemma}\label{Hilbert_scheme_of_pts_in_the_diagonal}
There is an isomorphism $(\mu^{-1}(0)^{\det-s})^\si / \GL_n \cong \Hilb^n(D(\AA^1))$.
\end{lemma}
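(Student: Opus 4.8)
The plan is to identify the $\sigma$-fixed stable locus with the space of cyclic pairs that parametrises $\Hilb^n(\AA^1)$, and then to descend to the $\GL_n$-quotient. As recorded above, a $\sigma$-fixed point of $\mu^{-1}(0)^{\det-s}$ has the form $M = (M_x,M_x,M_i,0)$ with $M_i$ a cyclic vector for $M_x$ (here $M_j = 0$ comes from $\det$-stability, $M_x = M_y$ from $\sigma$-invariance, and cyclicity for $M_x$ and $M_y$ reduces to cyclicity for $M_x$). First I would observe that the projection
\[ (\mu^{-1}(0)^{\det-s})^\sigma \overset{\simeq}{\lra} N := \{ (B,v) \in \Mat_{n\times n} \times \Mat_{n\times 1} : v \text{ is cyclic for } B \}, \quad M \lmt (M_x,M_i), \]
is an isomorphism of $k$-schemes, with inverse $(B,v) \lmt (B,B,v,0)$ (which lies in $\mu^{-1}(0)^{\det-s}$ since $[B,B]=0$ and $v\otimes 0 = 0$), and that it is equivariant for the conjugation action of $\GL_n = \GL_n^\sigma$. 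Since $\GL_n$ acts freely on the $\det$-stable locus it acts freely on $N$, and both sides are geometric quotients, so it suffices to prove $N/\GL_n \cong \Hilb^n(D(\AA^1))$.

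Second, I would compose the linear isomorphism $D : \AA^1 \overset{\simeq}{\lra} D(\AA^1)$ with the map $M \lmt J_M$ already constructed. Since $J_M \supseteq (x-y)$ and $f(B,B)$ depends only on $f$ modulo $(x-y)$, this identifies the desired morphism $N \to \Hilb^n(D(\AA^1))$ with the assignment sending $(B,v)$ to the colength-$n$ ideal $\{ g \in k[x] : g(B)v = 0 \} = (p_B)$, where $p_B$ is the characteristic polynomial of $B$, which has degree $n$ and generates this annihilator precisely because $v$ is cyclic. This assignment is $\GL_n$-invariant and hence factors through $N/\GL_n$.

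Third, I would check that the induced morphism $N/\GL_n \to \Hilb^n(\AA^1)$ is an isomorphism by exhibiting an explicit inverse. Every colength-$n$ ideal of $k[x]$ is principal, equal to $(p)$ for a unique monic $p$ of degree $n$; sending $p$ to the class of $(C_p,e_1)$, where $C_p$ is the companion matrix of $p$ and $e_1$ the first standard basis vector, defines a morphism $\Hilb^n(\AA^1)\cong\AA^n \to N/\GL_n$. The two morphisms are mutually inverse: any cyclic pair $(B,v)$ is conjugate to $(C_{p_B},e_1)$ via the basis $v, Bv, \dots, B^{n-1}v$, and $p_{C_p} = p$.

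The only genuinely nontrivial input is the cyclic-pair description of $\Hilb^n(\AA^1)$ used in the third step (equivalently, that $N/\GL_n \cong \AA^n$); this is the one-loop framed Jordan quiver specialisation of Nakajima's ADHM-type construction and is standard, so the bulk of the work is bookkeeping: verifying the $\GL_n$-equivariant identification of the first step and matching the two explicit morphisms. I expect no serious obstacle here, only the care needed to ensure that the constructions are genuine morphisms of schemes, and not merely bijections on closed points.
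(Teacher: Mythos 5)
Your proof is correct and follows essentially the same route as the paper: both descend the $\GL_n$-invariant assignment $M \mapsto J_M$ to the quotient and invert it by turning a colength-$n$ ideal $J=(p)\subset k[x]$ back into a matrix with a cyclic vector --- your companion-matrix pair $(C_p,e_1)$ is exactly the paper's multiplication-by-$x$ operator on $k[x]/J$ with cyclic vector $1$, written in the monomial basis. The additional bookkeeping in your write-up (the cyclic-pairs model $N$, the characteristic-polynomial description, and checking equality of morphisms rather than mere bijectivity on closed points) only makes explicit what the paper's proof leaves implicit.
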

\begin{proof}
Since the map $ (\mu^{-1}(0)^{\det-s})^\si\lra \Hilb^n(D(\AA^1))$ described above is $\GL_n$-invariant, it descends to a morphism
$(\mu^{-1}(0)^{\det-s})^\si / \GL_n \lra \Hilb^n(D(\AA^1))$
by the universal property of the GIT quotient. To show this is an isomorphism, we need to describe the inverse map. A codimension $n$ ideal $J \subset k[x,y]/I \cong k[x]$ determines a $n$-dimensional $k$-vector space $V = k[x]/J$, and multiplication by $x$ induces an endomorphism $M_x : V \lra V$ and the inclusion of the multiplicative unit induces a map $M_i : k \lra V$. Furthermore, the image of $M_i$ under repeated applications of $M_x$ cyclically generates $V$. Hence, $J = J_M$ for the $\sigma$-fixed stable point $M = (\varphi \circ M_x \circ \varphi^{-1} ,\varphi \circ M_x \circ \varphi^{-1}, \varphi \circ M_i,0)$, where $\varphi : V \lra k^n$ is a chosen isomorphism (we note that different choices of $\varphi$ correspond to different points in the $\GL_n$-orbit of $M$).
\end{proof}

By Proposition \ref{prop constr f cov} and Lemma \ref{Hilbert_scheme_of_pts_in_the_diagonal}, there is a map $f_\sigma: \Hilb^n(D(\AA^1)) \lra \Hilb^n(\AA^2)^\si$, which is injective by Proposition \ref{prop fSigma inj}, as $\sigma$ fixes all vertices of $Q$. Let us decompose $\Hilb^n(\AA^2)^\si$ by using the type map. First, we note that there is only one fibre of the type map, as $H^2(\ZZ/2\ZZ, \{1\}) = 1$. By Theorem \ref{components_of_fibres_of_the_type_map}, the trivial fibre of the type map has a decomposition indexed by $H^1(\ZZ/2\ZZ, \GL_n(k))/\{1\}$, where the $\ZZ/2\ZZ$-action on $\GL_n(k)$ is trivial. We note that $H^1(\ZZ/2\ZZ, \GL_n(k))$ is in bijection with the set of conjugacy classes of $n \times n$-matrices of order 2. The minimum polynomial of such a matrix divides $x^2 -1$, and so this matrix is diagonalisable with eigenvalues equal to $\pm 1$. Therefore, $H^1(\ZZ/2\ZZ, \GL_n(k)) \cong \{ u_0, \dots , u_n \}$,
where $u_r$ is the diagonal $n \times n$-matrix with $-1$ appearing $r$ times on the diagonal followed by $1$ appearing $n -r$ times. The element $u_0 = I_n$ corresponds to the trivial modifying family, which does not alter the action. For $r > 0$, we have
\[ {_{u_r}}\RepQbar^{\si} = \{ M = (M_x,M_y,M_i,M_j) : M_x = M_y, \: u_r M_i =M_i \text{ and } M_j u_r = M_j \}, \]
thus, $(M_i)_l = 0 = (M_j)_l$ for $1 \leq l \leq r$. In particular, for $u_n = - I_n$, the intersection of this fixed locus with $\mu^{-1}(0)^{\det-s}$ is trivial, as for $M$ to lie in this fixed locus, we must have $M_i = 0$, which cannot be a cyclic vector. Moreover, ${_{u_r}}\GL_n^{\sigma} $ is the centraliser of $u_r$ in $\GL_n$, and so ${_{u_r}}\GL_n^{\sigma} \simeq \GL_r \times \GL_{n-r}$. Hence, we have a decomposition into varieties
\[ \big(\Hilb^n(\AA^2)\big)^\sigma \simeq \bigsqcup_{ r= 0}^n ({_{u_r}}\RepQbar^{\si} \cap \mu^{-1}(0)^{\det-s}) / (\GL_r \times \GL_{n-r}). \]
This fixed locus is not a brane (note that $\sigma$ is neither $\ov{Q}$-symplectic or $\ov{Q}$-anti-symplectic because $A = \{x,i\}$ and $A^* = \{y,j\}$ so $\sigma(A) \nsubseteq A$ and $\sigma(A) \nsubseteq A^*$).

\begin{rmk} 
One could also consider the $\Gamma$-equivariant Hilbert schemes of $n$-points in the plane, for a finite group $\Gamma \subset \SL_2$. The McKay correspondence associates to such a finite group $\Gamma$ (up to conjugacy) an affine Dynkin graph of ADE type and the $\Gamma$-equivariant Hilbert scheme is a Nakajima quiver variety associated to this affine Dynkin graph. An interesting question, which we do not pursue here, is whether the fixed loci for subgroups of the automorphism group of these quivers have a special representation-theoretic interpretation. For affine Dynkin diagrams of type A, the work of Henderson and Licata \cite{Henderson_Licata} shows this to be the case.
\end{rmk} 

\subsection{Moduli of points on $\PP^1$ and polygon spaces}

For $n > 1$, fix a tuple of positive integer weights $r = (r_1, \dots , r_n)$ and an algebraically closed field $k$. Let $\cM^{r-ss}_{\PP^1}$ be the moduli space of ${r}$-semistable $n$ ordered points on $\PP^1$ (over $k$) modulo the automorphisms of the projective line, where $n$ ordered points $(p_1, \dots , p_n)$ on $\PP^1$ are ${r}$-semistable if, for all $p_0 \in \PP^1$, 
$\sum_{i\ |\ p_i = p_0} r_i \leq \sum_{i\ |\ p_i \neq p_0} r_i$. This moduli space can be constructed via GIT as a quotient of the $\SL_2$-action on $(\PP^1)^n$ with respect to an ample linearisation $\cL_r$ associated to the weights. Over the complex numbers, via the Kempf-Ness Theorem, the moduli space $\cM^{r-ss}_{\PP^1}(n)$ is homeomorphic to the polygon space $\cM_{\text{poly}}(n,r)$ consisting of $n$-gons in the Euclidean space $\EE^3$ with lengths given by $r$ modulo orientation-preserving isometries \cite{Gelfand_Macpherson,Hausmann_Knutson,KT}. 

The moduli space $\cM^{{r}-ss}_{\PP^1}(n)$ is isomorphic to a moduli space of representations for the star-shaped quiver $Q_n$ with one central vertex $v_0$ and $n$ outer vertices $v_0, \dots , v_n$ and arrows $a_i : v_i \lra v_0$ 
\[\xymatrixcolsep{0.3pc}\xymatrixrowsep{0.3pc}\xymatrix{ & & & \: \: \: \: \bullet v_1 \ar[ddd]_{a_1}& & & 
\\ & v_n \bullet \ar[rrdd]_{a_n} & & & &\bullet v_2 \ar[lldd]_{a_2} & \\ \\ v_{n-1} \bullet \ar[rrr]_{a_{n-1}} & & & \: \: \: \: \bullet v_0 & & & \bullet v_3 \ar[lll]_{a_3} \\  \\
& \ddots & & & & \bullet v_4 \ar[lluu]_{a_4} & \\ & & & \: \: \: \:\bullet v_5 \ar[uuu]_{a_5} & & & }\]
with dimension vector $d=(2,1,\cdots,1) \in \NN^{n+1}$, which we will often suppress from the notation, and stability parameter $\theta_r:=(- \sum_{i=1}^n r_i, 2r_1, 2r_2, \cdots , 2r_n)$. Hence, $\rep_{Q_n,d} \cong (\AA^{2})^n$ and a point $M \in \rep_{Q_n,d}$ is a tuple $M=(M_1, \cdots , M_n) $ of $k$-linear maps $M_i: k \lra k^2$; we note that the injectivity of all of these maps $M_i$ is a necessary condition for semistability for any tuple of positive weights $r$. In particular, a semistable point $M$ determines $n$ ordered points in $\PP^1\simeq \A^2/\!/_{\det_{r_i}} \Gm$ by taking the lines in $k^2$ given by the images of the injective linear maps $M_i$. Since $\mathbf{G}_{Q_n,d} = \GL_2 \times \GG_m^n$, it follows that
\[ \cM^{\theta_r-ss}_{Q_n,d} := (\AA^2)^n /\!/_{\chi_{\theta_r}} (\GL_2\times \GG_m^n ) \cong (\PP^1)^n /\!/_{\cL_r} \GL_2 =:  \cM^{r-ss}_{\PP^1}(n). \]

\noindent Every automorphism of $Q_n$ must fix $v_0$, so 
$\Aut(Q_n) = \Aut^+(Q_n)\cong S_n$. The dimension vector $d$ is $\Aut(Q_n)$-compatible, but the stability parameter $\theta_r$ is only $\Aut(Q_n)$-compatible in the equilateral case, where all weights $r_i$ coincide. By restricting our attention to subgroups $\Sigma \subset \Aut(Q_n)$, there are more weight vectors which are $\Sigma$-compatible. For instance, given any subset $I \subset \{ 1, \cdots , n\}$ of size $1 \leq m=| I | \leq n$, one can consider two remarkable subgroups of $\Aut(Q_n)$: a cyclic group $\Sigma_I \cong \ZZ/m\ZZ$ and a symmetric group $\Sigma_I'\cong S_m$, permuting the vertices indexed by $I$. For $\theta_r$ to be $\Sigma_I$-compatible (or, equivalently, $\Sigma_{I}'$ 
-compatible), we need the weight vector $r$ to satisfy $r_i = r_j$ for all $i,j$ in $I$. The quotient quivers for both $\Sigma_I$ and $\Sigma'_I$ agree: $Q_n/\Sigma_I = Q_n/\Sigma_I' \simeq Q_{n-m+1}$.
If, for notational simplicity, we suppose $I = \{1, \cdots , m\}$, then the induced stability parameter $\tilde{\theta_r} = (- \sum_{i=1}^n r_i, 2mr_1, 2r_{m+1}, \cdots, 2r_n)$ on $Q_{n-m+1}$ is the stability parameter associated to the weight vector $r_I = (mr_1, r_{m+1}, \cdots, r_n)$. By Proposition \ref{prop constr f cov}, there are morphisms
$ f_{\Sigma_I^{(')}} : \cM^{\theta_{r_I}-ss}_{Q_{n-m+1}} = \rep_{Q_n,d}^{\Sigma_I^{(')}} /\!/_{\chi_{\theta_r}} \mathbb{G}_{Q_n}^{\Sigma_I^{(')}} \lra (\cM_{Q_n}^{\theta_r-ss})^{\Si_I^{(')}}$,
whose restriction to the regularly stable locus is injective by Proposition \ref{prop fSigma inj}, as the vertex $v_0$ is fixed by every automorphism of $Q_n$. Over the complex numbers, $f_{\Sigma_I^{(')}}^{rs}$ can be identified with the inclusion $\cM_{\text{poly}}(n-m+1,r_I) \hookrightarrow \cM_{\text{poly}}(n,r)$. The fibres of the type map can be decomposed using Theorem \ref{components_of_fibres_of_the_type_map}. 

For example, consider $\sigma = (1 2)$ and fix a weight vector $r$ with $r_1 = r_2$. The type map has only one fibre, as  $H^2(<\!\si\!>, \Delta(k)) = 1$. Moreover, by Theorem \ref{components_of_fibres_of_the_type_map}, the components of $(\cM_{Q_n}^{\theta_r-s})^\si$ are indexed by $H^1(\Si,\mathbf{G}_{Q_n,d}(k)) / H^1(\Si,\Delta(k))$, where $\Sigma:=<\!\!\si\!\!>$. Note first that $H^1(\Si,\Delta(k))\simeq \{\pm 1\}$. Let $\beta : \Sigma \lra \mathbf{G}_{Q_n,d}(k)$ be a normalised 1-cocycle; then $\beta$ is given by $\beta(\sigma) = (u_0, u_1, \cdots , u_n) \in \mathbf{G}_{Q_n,d}(k)$ such that $u_0^2 = I_2$, $u_1u_2 =1$ and $u_i^2 = 1$ for $i=3, \cdots n$. Two such cocycles $\beta$ and $\beta'$ are cohomologous if there exists $g \in \mathbf{G}_{Q_n,d}(k)$ such that $g\beta(\sigma) \Psi_\sigma(g^{-1}) = \beta'(\sigma)$; that is, if and only if $g_0u_0g_0^{-1} = u_0'$ and $g_1u_1 g_2^{-1} = u_1'$ and $u_i = u_i'$ for $ i = 3, \cdots , n$. Thus, to describe the cohomology class, we can assume that $u_0$ is in Jordan normal form and $u_1 = u_2 = 1$. Since $u_0^2 = I_2$, we can assume that $u_0 \in \{\pm I_2,A:= \diag(-1,1)\}$. Hence, there is a bijection between $H^1(\Sigma, \mathbf{G}_{Q_n,d}(k)) / H^1(\Sigma, \Delta(k)) $ and the set
\[ \cB:= \{(u_0, \dots ,u_n) : u_0 \in \{I_2, A \}, \: u_1 = u_2 = 1, \: u_i = \pm 1 \text{ for } i=3,\cdots , n \},\]
which gives $2^{n-1}$ possible components of the fixed locus, although it turns out that some may be empty, as we shall now see.

\textit{Case 1.} Suppose $u_0 = I_2$. If there exists $3 \leq i \leq n$ with $u_i = -1$ , then for $M = (M_1, \dots , M_n) \in \rep_{Q_n,d}$ to lie in the fixed locus for the modified action given by $\Phi^u$, we need $M_i = 0$; in particular, the intersection of this fixed locus with the semistable set is always empty and so such a modifying family $u$ gives an empty contribution. The only non-empty contribution with $u_0 = I_2$ is the trivial element $u = 1$ in $\mathbf{G}_{Q_n,d}$, whose modified action is the original action $\Phi$.

\textit{Case 2.} Suppose $u_0 = A$. Then $(M_1, \cdots, M_n) \in \rep_{Q_n,d}$ is fixed for the modified action defined by $u$, if $M_2 = A M_1$ and, for $3 \leq i \leq n$, the image of $M_i$ is contained in the span of $(0,1) \in k^2$ (resp.\ $(1,0)  \in k^2$) if $u_i =1$ (resp.\ $u_i = -1$). Let $M \in {_u}\rep_{Q_n,d}^{\Sigma}$ be $\theta_r$-semistable; then each $M_i$ corresponds to a point $p_i\in\PP^1$, and, for $3 \leq i \leq n$, we have $p_i = [0:1]$ if $u_i = 1$ and $p_i= [1:0]$ if $u_i= -1$. As $M_2 = A M_1$, we have either $p_1 = p_2 = [0:1]$ or $p_1=[1:a]$ and $p_2 =[1:-a]$ for $a \in k$. 
Moreover, ${_u}\G^{\Si} \cong \GG_m^{n+2}$ and the image of ${_u}f_{\Si}$ in $\cM^{r-ss}_{\PP^1}$ is contained in the locus where for $i \geq 3$, we have $p_i = [0:1]$ (resp.\ $[1:0]$) if $u_i = 1$ (resp.\ $-1$). In terms of polygons, if we identify $[1:0]$ and $[0:1]$ with $(0,0,\pm1) \in \RR^3$, then these configurations are such that all but two arrows point in the $z$-direction (either up or down depending on the sign of $u_i$).

\begin{rmk}
By considering the doubled quiver $\overline{Q_n}$, one can construct  hyperk\"{a}hler analogues of the polygon space $\cM_{\text{poly}}(n,r)$, known as hyperpolygon spaces $\cH_{\text{poly}}(n,r)$ \cite{Konno}. In this case, we have $\Aut(\overline{Q_n}) \cong S_n \times \ZZ/2\ZZ$, where $\Aut^+(\overline{Q_n})=S_n$ and we can consider the contravariant involution $\sigma$ which fixed all vertices and sends $a_i$ to $a_i^*$ as a generator of $\ZZ/2\ZZ$. All covariant (resp.\ contravariant) automorphisms are $\overline{Q_n}$-symplectic (resp.\ $\overline{Q_n}$-anti-symplectic). In particular, our results on branes in $\S$\ref{branes_section} imply that for any permutation $\gamma \in S_n$ of order 2, we have that 
\begin{enumerate}
\item $\cH_{\text{poly}}(n,r)^{\gamma}$ is a $BBB$-brane,
\item $\cH_{\text{poly}}(n,r)^{\sigma \circ \gamma}$ is a $BAA$-brane,
\item $\cH_{\text{poly}}(n,r)^{\gamma \circ \tau}$ is a $ABA$-brane,
\item\ $\cH_{\text{poly}}(n,r)^{\sigma \circ \gamma \circ \tau}$ is a $AAB$-brane,
\end{enumerate}
where $\tau$ denotes complex conjugation. 
\end{rmk}

\appendix

\section{Relevant results from group cohomology}\label{sec gp coh}

Let us collect some results concerning the group cohomology of a finite group $\Si$. For a $\ZZ \Si$-module $A$, the cohomology groups of $\Si$ with $A$-coefficients are defined by
\[ H^n(\Si,A):= \Ext_{\ZZ \Si}^n(\ZZ,\Si) \] 
for all $n \geq 0$. These are functorial in $\Si$ and $A$ with associated long exact sequences; for a detailed introduction to group cohomology with abelian coefficients, see \cite{Brown}. 

\subsection{Coefficients in a finitely generated abelian group}

Let $\Si$ be a finite group and $A$ be a $\ZZ \Si$-module whose underlying abelian group is finitely generated; the groups $H^i(\Si,A)$ can be computed using the bar resolution of $\ZZ$, which is a resolution by free $\ZZ \Si$-module where in degree $n$ one has the free $\ZZ$-module $\ZZ[\Si^{n+1}]$ generated by the elements of $\Si^{n+1}$ (see \cite[Chapter I.5]{Brown}). Therefore
\[ \Hom_{\ZZ \Si} (\ZZ[\Si^{n+1}], A) \cong \Map(\Si^n,A)\]
is a finitely generated group, because $A$ is finitely generated and $\Si^n$ is finite. Since $H^n(\Si,A)$ are computed as the cohomology groups of the complex $\Hom_{\ZZ \Si} (\ZZ[\Si^{\bullet}], A)$, the groups  $H^n(\Si,A)$ are also finitely generated. As a consequence of Shapiro's Lemma \cite[Proposition III.6.2]{Brown}, we have the following result.

\begin{prop} \cite[Corollary III.10.2]{Brown}
For a finite group $\Si$ and a $\ZZ \Si$-module $A$, the group $H^n(G,A)$ is annihilated by $|G|$ for all $n > 0$.
\end{prop}

By combining the above results, we have the following well-known result.

\begin{cor}\label{cor finite for coeff in fg ab gp}
Let $\Si$ be a finite group and $A$ be a $\ZZ \Si$-module whose underlying abelian group is finitely generated; then $H^n(\Si,A)$ is finite for $n >0$.
\end{cor}

In particular, for $A = \ZZ$, we have $H^n(\Si,\ZZ) = 0$ for $n >0$.

\subsection{Trivial coefficients in the complex multiplicative group}

\begin{prop} \cite[(4.5)]{Webb}
Let $\Si$ be a finite group with trivial action on $\CC^*$; then we have isomorphisms for $i > 0$
\[ H^i(\Si,\CC^*) \cong H^i(\Si,\QQ/\ZZ) \cong H^{i+1}(\Si,\ZZ). \]
\end{prop}

By combining this with Corollary \ref{cor finite for coeff in fg ab gp}, we obtain the following result.

\begin{cor}\label{cor trivial coeff cstar}
Let $\Si$ be a finite group acting trivially on $\CC^*$; then $H^i(\Si,\CC^*)$ is finite for all $i > 0$.
\end{cor}

\subsection{First cohomology with non-abelian coefficients}

Let us recall how to define the first (and $0$th) cohomology groups of $\Si$ with coefficients in a non-abelian group $G$, which has a $\Si$-action given by a group homomorphism $\varphi : \Si \lra \Aut(G)$. 

\begin{defn}
A 1-cocycle on $\Si$ with $G$-coefficients is a function $f : \Si \lra G$ such that $f(s_1s_2) = f(s_1)\varphi_{s_1}(f(s_2))$ for all $s_1, s_2 \in G$. We let $Z^1_\varphi(\Si,G)$ denote the space of $1$-cocycles. We say two 1-cocycles $f,f' \in Z^1_\varphi(\Si,G)$ are cohomologous if there exists $g \in G$ such that $gf(s) = f'(s) \varphi_s(g)$ for all $s \in \Si$; this defines an equivalence relation $\sim$ on $Z^1_\varphi(\Si,G)$. We define $H^0_\varphi(\Si,G) := H^G$ and $H^1_\varphi(\Si,G):=Z^1_\varphi(\Si,G)/\sim$.
\end{defn}

We note that $H^1_\varphi(\Si,G)$ is naturally a pointed set with the point given by the trivial 1-cocycle, which sends every element in $\Si$ to the identity in $G$. Often we drop the subscript $\varphi$ denoting the action.

The group cohomology with non-abelian coefficients also has some functorial properties and associated exact sequences, which terminate when the relevant cohomologies are no longer definable. Let $1 \ra G' \ra G \ra G'' \ra 1$ be an exact sequence of groups such that $G'$ is abelian and central in $G$. Suppose that $\Si$ acts on all these groups and this short exact sequence is $\Si$-equivariant; then there is an exact sequence 
\[ 1 \ra (G')^{\Si} \ra G^{\Si} \ra (G'')^{\Si} \ra H^1(\Si,G) \ra H^1(\Si,G) \ra H^1(\Si,G) \ra H^2(\Si,G') \]
which terminates at this point (if $G$ is non-abelian). In this setting, the group $H^1(\Si,G')$ acts on the pointed set $H^1(\Si,G)$: for $f' \in Z(\Si,G')$ and $f \in Z(\Si,G)$ we define $f' \cdot f'' $ by $(f' \cdot f)(s) = f'(s) \cdot f(s)$, where this product is just the multiplication in $G$. As $G'  \subset G$ is central, this is a 1-cocycle. We note that this is not the same as the cokernel of the map $H^1(\Si,G') \ra H^1(\Si,G)$ in the category of pointed sets.

\subsection{Group cohomology arising in the context of quiver automorphisms}

Let $\Si \subset \Aut(Q)$ be a subgroup of automorphisms of a quiver $Q$; then there is an induced $\Si$-action on $\G$ as defined at \eqref{cov action}, and the subgroup $\Delta \cong \GG_m$ is preserved by that $\Si$-action. In this section, we give some sufficient conditions for the finiteness of the group cohomology appearing in Theorem \ref{decomp_thm_qaut_intro}.

\begin{prop}\label{prop gp coh with coeffs in Delta over C}
Let $\Si \subset \Aut(Q)$ be a subgroup acting on $\Delta(\CC) \cong \CC^*$ as above.
\begin{enumerate}
\item If $\Si \subset \Aut^+(Q)$, then this $\Si$-action on  $\Delta(\CC)$ is trivial and $H^i(\Si,\Delta(\CC))$ is finite for all $i > 0$.
\item  if $\Si = \langle \sigma \rangle$ for a contravariant involution $\sigma$, then $\Si$-acts on $\Delta$ by inversion and $H^1(\Si,\Delta(\CC))$ is trivial and $H^2(\Si,\Delta(\CC)) = \ZZ/2\ZZ$.
\end{enumerate}
\end{prop}
\begin{proof}
We identified the $\Si$-action on $\Delta$ in $\S$\ref{sec action quiver autos}. The first claim follows from Corollary \ref{cor trivial coeff cstar} and the second claim follows from the computation of these groups at the end of $\S$\ref{sec cov and contr case}.
\end{proof}

\begin{thm}
Let $k = \CC$ and let $\Si \subset \Aut(Q)$ be a subgroup of automorphisms of a quiver $Q$; then for the above induced actions of $\Si$ on $\G$ and $\Delta$, we have that $H^1(\Si, \G(\CC))$ and $H^i(\Si,\Delta(\CC))$ for $ i = 1,2$ are finite sets.
\end{thm}
\begin{proof}
The finiteness for $H^1$ is proved in \cite[Corollary 1.1]{AnWang}. 
The finiteness of $H^2(\Si,\Delta(\CC))$ follows from Proposition \ref{prop gp coh with coeffs in Delta over C}.
\end{proof}

Let us consider the group cohomology for $\Sigma$ a cyclic group. For a cyclic group with abelian coefficients, the group cohomology is well-understood: for an algebraically closed field $k$ with trivial $\Sigma$-action on $k^*$, we have 
\[ H^1(\ZZ/n\ZZ,k^*) =\mu_n(k) \quad \text{and} \quad H^2(\ZZ/n\ZZ,k^*) = 1\]
where the second group is described in Example \ref{ex H2 cyclic gp}. It remains to describe the first cohomology with $\G(k)$-coefficients.

\begin{prop} 
Let $k$ be an algebraically closed field and $\Sigma  \subset \Aut^+(Q)$ be a cyclic group of covariant quiver automorphisms. Then $H^1(\Sigma, \G(k))$ is finite.
\end{prop}
\begin{proof}
As the $\Sigma$-action on $\G(k)$ comes from the $\Sigma$-action on $V$, we have
\[ H^1(\Sigma, \G(k)) = \prod_{\Sigma \cdot v \in V/\Sigma} H^1(\Sigma, \prod_{v' \in \Sigma \cdot v} \GL_{d_{v'}}(k)) \]
and it suffices to check the sets appearing in the right hand side are all finite. 

We claim that the set $H^1 (\ZZ/n\ZZ, \GL_{d}(k))$ is finite for the trivial action. Indeed, as the action is trivial, a 1-cocycle is just a group homomorphism and two 1-cocycles are cohomologous if and only if they are conjugate. Since $\ZZ/n\ZZ$ is cyclic, such a group homomorphism is determined by the image of the generator and, as we are considering it up to conjugation over an algebraically closed field, we can assume that this matrix is in Jordan normal form and all the eigenvalues are $n$th roots of unity. Hence, $H^1 (\ZZ/n\ZZ, \GL_{d}(k))$ is a subset of the set of Jordan normal forms in $\GL_{d}(k)$ whose eigenvalues are all $n$th roots of unity in $k$, which is finite.

For an orbit $\Sigma \cdot v$, we note that the group $\ov{\Sigma}_v:=\Sigma /\Stab_{\Sigma}(v)$ acts freely and transitively on $\Sigma \cdot v$ and, as $d$ is $\Sigma$-compatible, we have $d_{v'} = d_v$ for all $v \in V$. By considering the associated exact sequence, it suffices to prove that both
\[ H^1(\ov{\Sigma}_v,  \prod_{v' \in \Sigma \cdot v} \GL_{d_v})\quad \text{and}\quad  H^1(\Stab_{\Sigma}(v),  \prod_{v' \in \Sigma \cdot v} \GL_{d_v})\]
are finite. The finiteness of the second group follows as $H^1(\ZZ/n\ZZ, \GL_{d}(k))$ is finite. 

For the finiteness of the first group, we note that the $\ov{\Sigma}_v$-action on $\prod_{v' \in \Sigma \cdot v} \GL_{d_v}$ is induced by the trivial action of the trivial subgroup $1$ on $\GL_{d_v}$ and by a version of Shapiro's Lemma for non-abelian coefficients (see \cite[Proposition 8]{Stix}), we have
\[ H^1(\ov{\Sigma}_v,  \prod_{v' \in \Sigma \cdot v} \GL_{d_v}(k)) \cong H^1 (1, \GL_{d_v}(k))=1. \]
This completes the proof.
\end{proof}

In conclusion, we obtain the following result.

\begin{cor}\label{cor index finite}
Let $k$ be an algebraically closed field and $\Sigma \subset \Aut^+(Q)$. Suppose that either of the following statements holds
\begin{enumerate}
\renewcommand{\labelenumi}{(\roman{enumi})}
\item $k=\CC$ and $\Sigma$ is arbitrary,
\item $k$ is of arbitrary characteristic and $\Sigma$ is cyclic.
\end{enumerate}
Then $H^1(\Si, \G(k))$ and $H^i(\Si,\Delta(k))$ for $ i = 1,2$ are finite sets.
\end{cor}


\begin{thebibliography}{10}

\bibitem{AnWang}
Jinpeng An and Zhengdong Wang.
\newblock Nonabelian cohomology with coefficients in {L}ie groups.
\newblock {\em Trans. Amer. Math. Soc.}, 360(6):3019--3040, 2008.

\bibitem{BS1}
D.~Baraglia and L.~P. Schaposnik.
\newblock Higgs bundles and ({A,B,A})-branes.
\newblock {\em Commun. Math. Phys.}, 331:1271--1300, 2014.

\bibitem{BS2}
D.~Baraglia and L.~P. Schaposnik.
\newblock Real structures on moduli spaces of {H}iggs bundles.
\newblock {\em Adv. Theor. Math. Phys.}, 20(3):525--551, 2016.

\bibitem{BGP}
I.~Biswas and O.~Garc{\'{\i}}a-Prada.
\newblock Anti-holomorphic involutions of the moduli spaces of {H}iggs bundles.
\newblock {\em J. \'Ec. polytech. Math.}, 2:35--54, 2015.

\bibitem{Bocklandt}
R.~Bocklandt.
\newblock A slice theorem for quivers with an involution.
\newblock {\em J. Algebra Appl.}, 9(3):339--363, 2010.

\bibitem{Brown}
K.~S. Brown.
\newblock {\em Cohomology of groups}, volume~87 of {\em Graduate Texts in
  Mathematics}.
\newblock Springer-Verlag, New York-Berlin, 1982.

\bibitem{CGP}
B.~Conrad, O.~Gabber, and G.~Prasad.
\newblock {\em Pseudo-reductive groups}.
\newblock Cambridge University Press, second edition, 2010.

\bibitem{CBmoment}
W.~Crawley-Boevey.
\newblock Geometry of the moment map for representations of quivers.
\newblock {\em Compositio Math.}, 126(3):257--293, 2001.

\bibitem{Derksen_Weyman}
H.~Derksen and J.~Weyman.
\newblock Generalized quivers associated to reductive groups.
\newblock {\em Colloq. Math.}, 94(2):151--173, 2002.

\bibitem{fjm}
E.~Franco, M.~Jardim, and S.~Marchesi.
\newblock Branes in the moduli space of framed instantons.
\newblock {\em Bull. Sci. Math.}, 141:353--383, 2017.

\bibitem{GPR}
O.~Garc\'ia-Parda and S.~Ramanan.
\newblock Involutions and higher order automorphisms of {H}iggs bundle moduli
  spaces.
\newblock arXiv:1605.05143, 2016.

\bibitem{GPW}
O.~Garc\'ia-Prada and G.~Wilkin.
\newblock Action of the mapping class group on character varieties and {H}iggs
  bundles.
\newblock arXiv:1612.02508, 2016.

\bibitem{Gelfand_Macpherson}
I.~M. Gel{\cprime}fand and R.~D. MacPherson.
\newblock Geometry in {G}rassmannians and a generalization of the dilogarithm.
\newblock {\em Adv. in Math.}, 44(3):279--312, 1982.

\bibitem{ginzburg}
V.~Ginzburg.
\newblock Lectures on {N}akajima's quiver varieties.
\newblock arXiv: 0905.0686, 2009.

\bibitem{Hausmann_Knutson}
J.-C. Hausmann and A.~Knutson.
\newblock Polygon spaces and {G}rassmannians.
\newblock {\em Enseign. Math. (2)}, 43(1-2):173--198, 1997.

\bibitem{HS}
S.~Heller and L.~Schaposnik.
\newblock Branes through finite group actions.
\newblock arXiv:1611.00391, 2016.

\bibitem{Henderson_Licata}
A.~Henderson and A.~Licata.
\newblock Diagram automorphisms of quiver varieties.
\newblock {\em Adv. Math.}, 267:225--276, 2014.

\bibitem{HS_galois}
V.~Hoskins and F.~Schaffhauser.
\newblock Rational points of quiver moduli spaces.
\newblock arXiv:1704.08624, 2017.

\bibitem{KT}
Y.~Kamiyama and M.~Tezuka.
\newblock Symplectic volume of the moduli space of spatial polygons.
\newblock {\em J. Math. Kyoto Univ.}, 39(3):557--575, 1999.

\bibitem{Kapustin_Witten}
A.~Kapustin and E.~Witten.
\newblock Electric-magnetic duality and the geometric {L}anglands program.
\newblock {\em Commun. Number Theory Phys.}, 1(1):1--236, 2007.

\bibitem{Kempf}
G.~Kempf.
\newblock Instability in invariant theory.
\newblock {\em Ann. of Math.}, 108(2):299--316, 1978.

\bibitem{kempf_ness}
G.~Kempf and L.~Ness.
\newblock The length of vectors in representation spaces.
\newblock In {\em Algebraic Geometry}, volume 732 of {\em Lecture Notes in
  Mathematics}, pages 233--243. Springer Berlin / Heidelberg, 1979.

\bibitem{king}
A.~D. King.
\newblock Moduli of representations of finite dimensional algebras.
\newblock {\em Quart. J. Math.}, 45:515--530, 1994.

\bibitem{Konno}
H.~Konno.
\newblock Cohomology rings of toric hyperk\"ahler manifolds.
\newblock {\em Internat. J. Math.}, 11(8):1001--1026, 2000.

\bibitem{lBP}
L.~Le~Bruyn and C.~Procesi.
\newblock Semisimple representations of quivers.
\newblock {\em Trans. Amer. Math. Soc.}, 317(2):585--598, 1990.

\bibitem{mw}
J.~Marsden and A.~Weinstein.
\newblock Reduction of symplectic manifolds with symmetry.
\newblock {\em Rep. Math. Phys.}, 5:121--130, 1974.

\bibitem{proudfoot}
N.~Proudfoot.
\newblock {\em Hyperkahler analogues of Kahler quotients}.
\newblock PhD thesis, U.C. Berkeley, 2004.

\bibitem{Sch_JSG}
F.~Schaffhauser.
\newblock Real points of coarse moduli schemes of vector bundles on a real
  algebraic curve.
\newblock {\em J. Symplectic Geom.}, 10(4):503--534, 2012.

\bibitem{Sch_JDG}
F.~Schaffhauser.
\newblock On the {N}arasimhan-{S}eshadri correspondence for {R}eal and
  {Q}uaternionic vector bundles.
\newblock {\em J. Differential Geom.}, 105(1):119--162, 2017.

\bibitem{Sch_actions}
F.~Schaffhauser.
\newblock Finite group actions on moduli spaces of vector bundles.
\newblock arXiv:1608.03977. To appear in \textit{Sémin. Théor. Spectr. Géom.
  (Grenoble)}, 2018.

\bibitem{Shmelkin}
D.~A. Shmelkin.
\newblock Signed quivers, symmetric quivers and root systems.
\newblock {\em J. London Math. Soc. (2)}, 73(3):586--606, 2006.

\bibitem{Stix}
J.~Stix.
\newblock Trading degree for dimension in the section conjecture: the
  non-abelian shapiro lemma.
\newblock {\em Math. J. Okayama Univ.}, 52:29--43, 2010.

\bibitem{Webb}
P.~J. Webb.
\newblock An introduction to the cohomology of groups.
\newblock
  www-users.math.umn.edu/~webb/oldteaching/Year2010-11/8246CohomologyNotes.pdf,
  2010.

\bibitem{young}
M.~B. Young.
\newblock Self-dual quiver moduli and orientifold {D}onaldson-{T}homas
  invariants.
\newblock {\em Commun. Number Theory Phys.}, 9(3):437--475, 2015.

\bibitem{Zubkov_I}
A.~N. Zubkov.
\newblock Invariants of mixed representations of quivers. {I}.
\newblock {\em J. Algebra Appl.}, 4(3):245--285, 2005.

\bibitem{Zubkov_II}
A.~N. Zubkov.
\newblock Invariants of mixed representations of quivers. {II}. {D}efining
  relations and applications.
\newblock {\em J. Algebra Appl.}, 4(3):287--312, 2005.

\end{thebibliography}

\def\cprime{$'$}

\end{document}